\documentclass[12pt,reqno,a4paper]{amsart}
\usepackage{amsfonts,hyperref,amssymb}
\usepackage[dvipsnames]{xcolor}
\usepackage{lineno}
\usepackage[margin=2.5cm]{geometry}

\usepackage{hyperref}
\hypersetup{
    colorlinks=true,
    linkcolor=blue,
    citecolor=red,
}

 
\setlength\arraycolsep{2pt}
\newcommand{\veps}{\varepsilon}
\newcommand{\R}{\mathbb{R}}

\newcommand{\C}{\mathbb{C}}

\newcommand{\A}{\mathcal{A}}

\newcommand{\D}{\mathbb{D}}

\newcommand{\uld}{\overline{\operatorname{logdens}}}
\newtheorem{theorem}{Theorem}[section]

\theoremstyle{plain}

{\theoremstyle{definition}
\newtheorem{example}{Example}[section]}

\newtheorem{lemma}{Lemma}[section]

{\theoremstyle{definition}
\newtheorem{remark}{Remark}[section]}

\numberwithin{equation}{section}

{\theoremstyle{definition}
}

\linespread{1.1}

\title[On the number of linearly independent rapid solutions]{On the number of linearly independent rapid solutions to linear differential and
linear difference equations}
\author[Heittokangas]{Janne Heittokangas}
\author[Yu]{Hui Yu}
\author[Zemirni]{M.~Amine Zemirni}
\address[Heittokangas\\ Yu\\ Zemirni]{University of eastern Finland, Department of Physics and Mathematics, P.O.~Box 111, 80101 Joensuu, Finland}

\email{janne.heittokangas@uef.fi \\ huiy@uef.fi \\ amine.zemirni@uef.fi}

\date{\today}

\begin{document}

\maketitle

\begin{abstract}
Assuming that $A_0,\ldots,A_{n-1}$ are entire functions and that $p\in \{0,\ldots,n-1\}$ is the smallest index such that $A_p$ is transcendental, then, by a classical theorem of Frei, each solution base of the differential equation
$f^{(n)}+A_{n-1}f^{(n-1)}+\cdots +A_{1}f'+A_{0}f=0$ 
contains at least $n-p$ entire functions of infinite order. Here, the transcendental coefficient $A_p$ dominates the growth of the polynomial coefficients  $A_{p+1},\ldots,A_{n-1}$. By expressing the dominance of $A_p$ in different ways, and allowing the coefficients $A_{p+1},\ldots,A_{n-1}$ to be transcendental, we show that the conclusion of Frei's theorem still holds along with an additional estimation on the asymptotic lower bound for the growth of solutions. At times these new refined results give a larger number of linearly independent solutions of infinite order than the original theorem of Frei. For such solutions, we show that $0$ is the only possible finite deficient value. Previously this property has been known to hold for so-called admissible solutions and is commonly cited as Wittich's theorem. Analogous results are discussed for linear differential equations in the unit disc, as well as for complex difference and complex $q$-difference equations.

\medskip
\noindent {\bf Keywords:} Deficient values, entire functions, Frei's theorem, linear difference equations, linear differential equations, Wittich's theorem.

\medskip
\noindent {\bf 2010 MSC:} Primary 34M10; Secondary 30D35.
\end{abstract}


\section{Introduction}\label{plane-sec} 

If the coefficients $A_0(\not\equiv 0),\ldots,A_{n-1}$ are analytic in a simply connected domain $D\subset \C$, then the differential equation
	\begin{equation}\label{lden}
	f^{(n)}+A_{n-1}f^{(n-1)}+\cdots +A_{1}f'+A_{0}f=0
	\end{equation}
possesses $n$ linearly independent analytic solutions in $D$.
In particular, if all the coefficients $A_0,\ldots,A_{n-1}$ are polynomials, then it is known that all non-trivial solutions $f$ of \eqref{lden} are entire functions of finite order and of regular growth, which implies $\log T(r,f) \asymp \log r$. In the case that the coefficients $A_0,\ldots,A_{n-1}$ are entire and at least one of them is transcendental, it follows that there exists at least one solution of \eqref{lden} of infinite order. This is a consequence of the following result due to M.~Frei, which can be considered as one of the
seminal results regarding the growth of solutions of \eqref{lden}.

\bigskip\noindent
\textbf{Frei's theorem.} (\cite[p.~207]{Frei0}, \cite[p.~60]{Laine})
\emph{Suppose that the coefficients in \eqref{lden} are entire, and that at least one of them is transcendental. Suppose that 
$p\in\{0,\ldots,n-1\}$ is the smallest index such that $A_p$ is transcendental, that is, the coefficients
$A_{p+1},\ldots,A_{n-1}$, if applicable, are polynomials. Then every solution base of \eqref{lden} has at
least $n-p$ solutions of infinite order.}
\bigskip

The following well-known result of H.~Wittich is one of the cornerstones of complex oscillation theory.
The original statement is for rational coefficients, but an easy modification
of the proof generalizes the result to small meromorphic coefficients.

\bigskip
\noindent
\textbf{Wittich's theorem.} (\cite[p.~62]{Laine}, \cite[p.~54]{Wittich})
\emph{Suppose that a meromorphic solution $f$ of \eqref{lden} is admissible in the sense that
	\begin{equation}\label{admissibl}
	T(r,A_j)=o(T(r,f)),\quad r\not\in E,\ j=0,\ldots,n-1,
	\end{equation}
where $E\subset[0,\infty)$ is a set of finite linear measure. Then $0$ is the only possible finite 
Nevanlinna deficient value for $f$.}

\bigskip
Recall that the Nevanlinna deficiency $\delta(a,f)$ for the $a$-points of a meromorphic 
function $f$ is defined by 
	$$
	\delta(a,f) = \liminf_{r\to\infty} \frac{m(r,a,f)}{T(r,f)}
	=1-\limsup_{r\to\infty}\frac{N(r,a,f)}{T(r,f)},\quad a\in\widehat{\C}.
	$$
If $\delta(a, f) > 0$, then $a$ is called a {\it Nevanlinna deficient value} of $f$.

As observed in \cite[p.~246]{HX}, the functions $f_1(z)=\exp(e^z)$ and $f_2(z)=z\exp(e^z)$ are linearly independent solutions of
	\begin{equation}\label{ex-frei}
	f''-(2e^z+1)f'+e^{2z}f=0.
	\end{equation}
Therefore all non-trivial solutions of \eqref{ex-frei} are of infinite order and admissible in the sense of \eqref{admissibl}. In contrast, according to Frei's theorem, the equation \eqref{ex-frei} has at least one solution of infinite order. Meanwhile, Wittich's theorem does not say anything about the number of linearly independent admissible solutions. This motivates us to find improvements of Frei's theorem, which will also address the number of linearly independent admissible solutions.

The key idea in Frei's theorem is that the transcendental coefficient $A_p$ dominates the growth of the polynomial coefficients $A_{p+1}, \ldots, A_{n-1}$. In the main results of this paper, we introduce different ways to express that the transcendental coefficient~$A_p$ dominates the growth of the coefficients $A_{p+1}, \ldots, A_{n-1}$, which are not necessarily polynomials. As a part of the conclusions, we obtain that the equation \eqref{lden} has at least $n-p$ linearly independent solutions $f$ which are admissible and, moreover, superior to the growth of the coefficient $A_p$ in the sense that
	\begin{equation}\label{superior}
	T(r,A_p) \lesssim \log T(r,f).
	\end{equation}
Solutions $f$ of \eqref{lden} satisfying \eqref{superior} are considered as rapid solutions.
Since any transcendental entire function $g$ satisfies 
	\begin{equation}\label{T}
	\liminf_{r\to\infty} \frac{T(r,g)}{\log r} = \infty,
	\end{equation}
see \cite[Theorem~1.5]{YY}, we deduce that the linearly independent solutions $f$ satisfying~\eqref{superior} are of infinite order. Thus Frei's theorem follows as a special case.

Regarding the differential equation \eqref{lden} in the unit disc $\D$,  the \textit{Korenblum space} $\A^{-\infty}=\cup_{q\geq 0}\A^{-q}$ introduced in \cite{K} takes the role of the polynomials. Here, $\A^{-q}$ for $q\in[0,\infty)$ is the growth space consisting of functions $f$ analytic in $\D$ and satisfying
	\begin{equation*}
	\underset{z\in \D}{\sup}(1-|z|^2)^q|f(z)|<\infty.
	\end{equation*}
On one hand, if all the coefficients $A_0,\ldots, A_{n-1}$ belong to $\A^{-\infty}$, then all non-trivial solutions of \eqref{lden} are of finite order of growth, see \cite[p.~36]{JH}. On the other hand, if at least one of the coefficients $A_0,\ldots, A_{n-1}$ does not belong to $\A^{-\infty}$, then \eqref{lden} possesses at least one solution of infinite order. This is a consequence of the following unit disc counterpart of Frei's theorem.

\bigskip\noindent
\textbf{First formulation of Frei's theorem in $\D$.} (\cite[Theorem 6.3]{JH})
\emph{Suppose that the coefficients $A_0,\ldots, A_{n-1}$ in \eqref{lden} are analytic in $\D$, and
that at least one of them is not in $A^{-\infty}$. Suppose that 
$p\in\{0,\ldots,n-1\}$ is the smallest index such that $A_p$ is  not in $A^{-\infty}$, that is, the coefficients $A_{p+1},\ldots, A_{n-1}$, if applicable, are in $\A^{-\infty}$. Then every solution base of \eqref{lden} has at least $n-p$ solutions of infinite order.}
\bigskip

Recall that a function $g$ meromorphic in $\D$ is called \textit{admissible} if  
	\begin{equation}\label{admissible-0}
	\limsup_{r\to 1^-}\frac{T(r,g)}{-\log (1-r)}=\infty,
	\end{equation}
otherwise $g$ is called \textit{non-admissible}.  As observed in \cite[p.~449]{HW}, for an admissible $g$ there exists a set $F\subset[0,1)$ with $\int_F \frac{dt}{1-t}=\infty$ such that 
	\begin{equation*}\label{admissible-1}
	\lim_{\substack{r\to 1^- \\ r\in F}}\frac{T(r,g)}{-\log (1-r)}=\infty.
	\end{equation*}
This is a unit disc analogue of \eqref{T}.
It is clear that the functions in $\A^{-\infty}$
are non-admissible. Conversely, the function $f(z)=\exp\left(\frac{1+z}{1-z}\right)$ has bounded
characteristic and hence it is non-admissible, but clearly $f\not\in \A^{-\infty}$. This gives
raise to the following second formulation of Frei's theorem in $\D$, which does not seem to appear
in the literature, but which follows easily from more general results in  Section~\ref{main-disc}. 

\bigskip\noindent
\textbf{Second formulation of Frei's theorem in $\D$.} 
\emph{Suppose that the coefficients $A_0,\ldots, A_{n-1}$ in \eqref{lden} are analytic in $\D$, and
that at least one of them is admissible. Suppose that 
$p\in\{0,\ldots,n-1\}$ is the smallest index such that $A_p$   is admissible,
that is, the coefficients $A_{p+1},\ldots, A_{n-1}$, if applicable, are non-admissible. 
Then every solution base of \eqref{lden} has at least $n-p$ solutions of infinite order.}

\bigskip

As observed in \cite[Example 1.4]{HKR2}, for $\beta>1,$ the functions $f_1(z)=\exp(\exp((1-z)^{-\beta}))$ and $f_2(z)=\exp((1-z)^{-\beta})\exp(\exp((1-z)^{-\beta}))$ are linearly independent infinite order solutions of 
	\begin{equation}\label{example-u-p}
	f''+A_1 f'+A_0 f=0,
	\end{equation}
where 
	$$
	A_1(z)=-\frac{2\beta\exp((1-z)^{-\beta})}{(1-z)^{\beta+1}}
	-\frac{\beta}{(1-z)^{\beta+1}}-\frac{1+\beta}{1-z}
	$$
and
	$$
	A_0(z)=\frac{\beta^2\exp(2(1-z)^{-\beta})}{(1-z)^{2\beta+2}}.
	$$
If $h_1(z)=\exp((1-z)^{-\beta})$ and $h_2(z)=(1-z)^{-(\beta+1)}$, then
	$$
	T(r,h_1)\asymp \int_0^{2\pi}\frac{d\theta}{|1-re^{i\theta}|^\beta}\asymp \frac{1}{(1-r)^{\beta-1}}
	\quad\textnormal{and}\quad T(r,h_2)=O(1).
	$$
Thus $A_0, A_1 \notin \A^{-\infty}$ are admissible and satisfy $T(r,A_0)=2T(r,A_1)+O(1)$. According to either formulation of Frei's theorem in $\D$, the equation \eqref{example-u-p} has at least one solution of infinite order.  Since all solutions of \eqref{example-u-p} are of infinite order, this leads us to consider possible improvements of Frei's theorems in $\D$.

The Nevanlinna deficiency for the $a$-points of a meromorphic function $f$ in $\D$ is defined
analogously as in the plane case simply by replacing ''$r\to\infty$'' with ''$r\to 1^-$''.
Differing from the plane case, we need to assume that $T(r,f)$ is unbounded. The unit disc
analogue of Wittich's theorem follows trivially by assuming that the set $E\subset [0,1)$ in \eqref{admissibl} now satisfies $\int_E\frac{dr}{1-r}<\infty$. 
The question on the number of linearly independent admissible solutions in Wittich's theorem is also valid in the unit disc. Note that the term ''admissible'' is used in two different meanings in the unit disc.

Slightly differing from the analogous situation in $\C$, the following two types of solutions of \eqref{lden} with coefficients analytic in $\D$ are considered as rapid solutions: 
\begin{itemize}
\item[(I)] Solutions $f$ satisfying \eqref{superior}, where $A_p$ is admissible.
\item[(II)] Solutions $f$ satisfying
	$$
	\log T(r,f) \gtrsim \log \int_{D(0,r)} |A_p(z)|^{\frac{1}{n-p}} dm(z),
	$$
where  $dm(z)$ is Lebesgue measure in the disc $D(0,r)$ and 
	$$
	\limsup_{r\to 1^-} \frac{\log \displaystyle \int_{D(0,r)} |A_p(z)|^{\frac{1}{n-p}} dm(z)}{- \log (1-r)} = \infty.
	$$
\end{itemize}
Here $A_p$ dominates the coefficients $A_{p+1}, \ldots, A_{n-1}$ in a certain way.

This paper is organized as follows. 
In Sections~\ref{main-plane} and \ref{main-disc}, the main results are stated in the cases of complex plane and the unit disc, and their sharpness is discussed in terms of examples.
A refinement of the standard order reduction method, needed for proving the main results, is given in Section~\ref{sec-reduction}.  The actual proofs are given in Sections~\ref{proof-plane} and \ref{proof-disc}. The analogous situation for linear difference  and $q$-difference equations is discussed in Sections~\ref{difference-sec} and \ref{q-difference-sec}, respectively.


\section{Results in the complex plane} \label{main-plane}
A refinement of Frei's theorem is given in \cite[Theorem~5.6]{HX} but is stated in terms of the number of linearly independent ``slow'' solutions $f$ of \eqref{lden} satisfying 
	$$
	\log T(r,f) = o(T(r,A_p)), \quad r\to\infty, \ r\notin E,
	$$
where $A_p$ dominates the growth of the coefficients $A_{p+1}, \ldots, A_{n-1}$ in a certain sense and $E\subset [0,\infty)$ is a set of finite linear measure.
However, the next example illustrates that some solutions may grow significantly slower than any of the coefficients.

\begin{example}
Let $\{z_n\}$ be a sequence defined by $z_{2n-1}=2^n$ and $z_{2n}=2^n+\varepsilon_n$, where
the numbers $\varepsilon_n>0$ are small, say
	$$
	0<\varepsilon_n<\exp\left(-\exp(2^n)\right),\quad n\geq 1.
	$$
Then \cite[Example~6]{GHW} shows that the canonical product
	$$
	f(z)=\prod_{n=1}^\infty\left(1-\frac{z}{z_n}\right)
	$$
is an entire solution of a differential equation
	\begin{equation}\label{2ode}
	f''+A_1 f'+A_0 f=0,
	\end{equation}
where the coefficients $A_1$ and $A_0$ are entire functions of infinite order of growth. Further restrictions
on the numbers $\varepsilon_n>0$ will induce even faster growth for $A_1$ and $A_0$. Meanwhile,
it is easy to see that $n(r,1/f)\asymp \log r$. Using
	$$
	r\int_1^\infty\frac{\log t}{t(r+t)}\, dt \leq 
	\int_1^r\frac{\log t}{t}\, dt+r\int_r^\infty \frac{\log t}{t^2}\, dt=O\left(\log^2 r\right)
	$$
together with (2.6.9) in \cite{Boas}, it follows that $T(r,f) \le \log M(r,f)=O\left(\log^2 r\right)$. 
\end{example}

The proof of Theorem~5.6 in \cite[p.~244]{HX} does not seem to support the exact formulation of \cite[Theorem~5.6]{HX}
because the set $I$ appearing in (5.1.31) is not in general the same as the set $I$ appearing
in (5.1.32). If these two sets are indeed different, then the set in (5.1.32) may affect on the validity of the $\limsup$ in (5.1.31). 

For reasons discussed above, we reformulate \cite[Theorem~5.6]{HX} such that it concerns the number of linearly independent rapid solutions, see Theorem~\ref{thhh1}. Moreover, the upper bound in \eqref{3'} is new.

\begin{theorem}\label{thhh1}
Let the coefficients $A_{0},\ldots,A_{n-1}$ in \eqref{lden} be entire functions such that at least one of them is transcendental.
Suppose that 
$p\in\{0,\ldots,n-1\}$ is the smallest index such that
	\begin{equation}\label{2}
	\limsup_{\substack{r\to \infty}}\sum_{j=p+1}^{n-1}\frac{T(r,A_{j})}{T(r,A_{p})}<1.
	\end{equation}
Then $A_p$ is transcendental, and every solution base of \eqref{lden} has at least $n-p$ rapid solutions $f$ for which 
	\begin{equation}\label{3'}
	T(r,A_{p}) \lesssim \log T(r,f) \lesssim \frac{R+r}{R-r}T(R,A_p),\quad r \not\in E,
	\end{equation}
where $E \subset [0,\infty)$ has finite linear measure, and $r<R<\infty$. For these solutions, the value $0$ is the only possible finite deficient value.
\end{theorem}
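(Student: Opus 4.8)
The plan is to combine a sharpened form of the classical order reduction (as prepared in Section~\ref{sec-reduction}) with an induction on $n$, and then to extract the deficiency statement from Wittich's theorem applied to the reduced equation. As a preliminary step, I would check that $A_p$ is transcendental: if $A_p$ were a polynomial then $T(r,A_p)=O(\log r)$, so \eqref{2} would force $T(r,A_j)=O(\log r)$, hence $A_j$ to be a polynomial, for every $j\ge p$; taking the largest index $m<p$ with $A_m$ transcendental, the coefficients $A_{m+1},\dots,A_{n-1}$ would all be polynomials and \eqref{T} would give $T(r,A_j)=o(T(r,A_m))$ for $j>m$, so $m$ satisfies \eqref{2}, contradicting the minimality of $p$.

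The core of the argument is an induction on $n$, inside which I would simultaneously prove that the linear space $V$ of \emph{slow} solutions --- those with $\log T(r,f)=o(T(r,A_p))$ off a set of finite linear measure --- has dimension $q:=\dim V\le p$. When $n=1$ we have $p=0$, $A_0$ transcendental, $V=\{0\}$, and the only solution is $f=c\exp(-\int_0^zA_0)$; both estimates in \eqref{3'} then follow from $\log^+M(r,f)=\max_{|z|=r}\operatorname{Re}(-\int_0^zA_0)$ together with $M(r,\int_0^zA_0)\le rM(r,A_0)$, a Cauchy estimate recovering $M(r,A_0)$, the bound $\log^+M(r',A_0)\le\frac{R+r'}{R-r'}T(R,A_0)$, and \eqref{T} to absorb $\log r$ (here $E=\emptyset$). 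For $n\ge2$, I pick a basis $g_1,\dots,g_q$ of $V$ and apply the refined order reduction successively with $g_1,\dots,g_q$; since at each step the coefficients at positions $\ge p-k$ are replaced only by linear combinations of $A_p,\dots,A_n$ whose multipliers are built from logarithmic derivatives of the $g_i$, and those have characteristic $o(T(r,A_p))$ off a finite-measure set (the $g_i$ being slow), one reaches an equation of order $n-q$ whose coefficient at the shifted position $p-q$ is transcendental with characteristic $(1+o(1))T(r,A_p)$ off a finite-measure set and still dominates the later coefficients in the sense of \eqref{2}. This reduced equation satisfies the hypotheses of the theorem with ``$p$'' replaced by $p-q$ and has smaller order. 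If one had $q\ge p+1$, then reducing \eqref{lden} by $p$ independent slow solutions would produce, by the same estimates, an equation of order $n-p$ with $p$-index $0$ possessing a nonzero slow solution (the image of a $(p+1)$-st independent slow solution), contradicting the induction hypothesis, which forces every solution of that equation to be rapid; hence $q\le p$. The induction hypothesis also gives the order-$(n-q)$ reduced equation at least $(n-q)-(p-q)=n-p$ independent rapid solutions, which are images of $n-p$ suitable members of any base of \eqref{lden}; transporting characteristics back through the reduction, using $T(r,h')\le2T(r,h)+S(r,h)$, $T(r,h_1/h_2)\le T(r,h_1)+T(r,h_2)+O(1)$, and the negligibility of the slow factors off a finite-measure set, shows these $n-p$ solutions of \eqref{lden} satisfy the lower bound $T(r,A_p)\lesssim\log T(r,f)$ and hence, by \eqref{T}, are of infinite order. (The cases $p=0$ and $p\ge1$ with $q=0$ need a separate direct treatment.)

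For the upper bound in \eqref{3'} I would again pass to the reduced equation and apply the classical global estimate $\log^+\log^+M(r,w)\le\log r+\max_j\frac1{m-j}\log^+M(r,B_j)+O(1)$ for a solution $w$ of an equation of order $m$ with entire coefficients $B_j$; combining it with $\log^+M(r',B_j)\le\frac{R+r'}{R-r'}T(R,B_j)$ and the domination $T(R,B_j)\lesssim T(R,A_p)$ established above gives $\log T(r,w)\lesssim\frac{R+r}{R-r}T(R,A_p)$, which transports back through the reduction since antidifferentiation and multiplication by the slow factors $g_i^{\pm1}$ inflate the characteristic only negligibly off a finite-measure set. For the last assertion, fix one of these rapid solutions $f$ and let $\hat f$ be its image under the reduction; every coefficient $B_j$ of the reduced equation is built from $A_p,\dots,A_n$ and the slow $g_i$, so $T(r,B_j)\lesssim T(r,A_p)\lesssim\log T(r,\hat f)=o(T(r,\hat f))$ off a finite-measure set, whence $\hat f$ is an admissible solution of the reduced equation and Wittich's theorem shows $0$ is its only possible finite deficient value. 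It then remains to carry this property back to $f=g_1\cdots g_q\cdot(\text{a fixed iterated antiderivative of }\hat f)$, using that multiplication by $g_i^{\pm1}$ changes the characteristic only by $o(T(r,f))$ and that, via Milloux-type inequalities, passing between a function and its derivative does not enlarge the relevant set of finite deficient values. (Alternatively one could try to verify directly that $f$ is admissible for \eqref{lden}; this is immediate for $A_p,\dots,A_{n-1}$ from \eqref{2} and $T(r,A_p)\lesssim\log T(r,f)$, but genuinely requires the reduction to dispose of $A_0,\dots,A_{p-1}$.)

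I expect the main obstacle to be the order-reduction bookkeeping: through $q$ successive reductions one must control both the growth of the new coefficients and the finite-measure exceptional sets attached to the lemma on the logarithmic derivative, so that \eqref{2} is genuinely inherited by the reduced equation and its dominant coefficient stays asymptotically equal to $T(r,A_p)$. Closely tied to this --- and also delicate --- is upgrading the ``slow versus not slow'' dichotomy into the sharp two-sided bound \eqref{3'} and into genuine rapidity (rather than mere non-slowness) of the $n-p$ solutions, handling the degenerate cases $p=0$ and $q=0$, and, for the final statement, making rigorous the transfer of the deficiency property from $\hat f$ back to $f$.
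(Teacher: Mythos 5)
Your overall strategy (order reduction, at most $p$ ``slow'' solutions, Wittich for the deficiency) is the right family of ideas, but the induction on $n$ has a genuine gap: it requires the reduced equation of order $n-q$ to again satisfy the hypothesis \eqref{2} at the shifted index $p-q$, and this does not follow from the available estimates. After one reduction step the new coefficients are $A_{1,j}=A_{j+1}+\sum_{k=j+2}^{n}\binom{k}{j+1}A_{k}\,f_{0,1}^{(k-j-1)}/f_{0,1}$, so each original $A_k$ with $k>p$ reappears in \emph{every} new coefficient $A_{1,j}$ with $j<k-1$. Writing $\delta$ for the gap in \eqref{2}, the best you can guarantee is $T(r,A_{1,p-1})\gtrsim\delta\,T(r,A_p)$ and $\sum_{j\ge p}T(r,A_{1,j})\lesssim(n-1-p)(1-\delta)T(r,A_p)$ (up to the $o(T(r,A_p))$ logarithmic-derivative contributions), so the new $\limsup$ is only bounded by $(n-1-p)(1-\delta)/\delta$, which exceeds $1$ unless $\delta$ is close to $1$. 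Your stronger claim that the dominant reduced coefficient has characteristic $(1+o(1))T(r,A_p)$ fails for the same reason: the terms $A_kC_k$, $k>p$, contribute up to $(1-\delta)T(r,A_p)$, not $o(T(r,A_p))$. The paper sidesteps this entirely: it never asserts that the intermediate equations satisfy \eqref{2}, but instead unwinds the full reduction into the single identity $-A_p=C_n+A_{n-1}C_{n-1}+\cdots+A_{p+1}C_{p+1}$ (Lemma~\ref{coeff_Ap}), where the $C_k$ are differential polynomials in logarithmic derivatives of the iterated reductions of the $p+1$ assumed slow solutions; applying $m(r,\cdot)$ to this identity and invoking \eqref{2} \emph{once, for the original coefficients} yields the contradiction $1<1$ directly. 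If you want to keep an inductive flavour you would have to reformulate the induction so that only this identity is used, at which point the induction is no longer doing any work.

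Two smaller points. For the upper bound in \eqref{3'} you invoke ``the domination $T(R,B_j)\lesssim T(R,A_p)$ established above,'' but nothing in your argument controls the low-index coefficients $A_0,\ldots,A_{p-1}$; the paper needs a separate argument (around \eqref{DM}, run with $T$ in place of $\log M$) showing $T(r,A_j)\lesssim T(r,A_p)$ for $j<p$ by taking the largest index violating this and contradicting the minimality of $p$. Once that is in hand, the upper bound follows from $T(r,f)\lesssim r\sum_jM(r,A_j)^{1/(n-j)}+1$ applied to the \emph{original} equation together with $\log M(r,A_j)\le\frac{R+r}{R-r}T(R,A_j)$ --- no transport through the reduction is needed. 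The same domination also makes the deficiency statement immediate: every rapid solution is admissible for \eqref{lden} itself (including the coefficients $A_0,\ldots,A_{p-1}$), so Wittich's theorem applies directly and your delicate transfer of deficiencies from $\hat f$ back to $f$ through antiderivatives and slow factors is unnecessary.
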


When $p= n-1$, the sum in \eqref{2} will be considered as zero, and the same situation applies in the next statements.

The upper bound of $\log T(r,f)$ in \eqref{3'} cannot be reduced to $ T(r,A_p)$, as is shown in Example~\ref{example01}(i) below.  Comparing \eqref{3'} with the classical inequalities
	$$
	T(r,A_p) \le \log M(r,A_p) \le \frac{R+r}{R-r} T(R,A_p),\quad r<R<\infty,
	$$
the quantities $\log T(r,f)$ and $\log M(r,A_p)$ seem to be comparable. 
Indeed, this is the case in the following result, but under a slightly different assumption.


\begin{theorem}\label{asymptotic-cor}
Let the coefficients $A_{0},\ldots,A_{n-1}$ in \eqref{lden} be entire functions such that at least one of them is transcendental. Suppose that $p\in\{0,\ldots,n-1\}$ is the smallest index such that
	\begin{equation}\label{2LM2}
	\limsup_{\substack{r\to \infty }}\sum_{j=p+1}^{n-1}\frac{\log^+ M(r,A_{j})}{\log^+ M(r,A_{p})}<1.
	\end{equation}
Then $A_p$ is transcendental, and every solution base of \eqref{lden} has at least $n-p$ rapid solutions $f$ for which
 	\begin{equation}\label{result-th1.3}
 	\log T(r,f)\asymp\log M(r,A_p),\quad r\not\in E,
 	\end{equation}
where $E \subset [0,\infty)$ has finite linear measure. For these solutions, the value $0$ is the only possible finite deficient value.
\end{theorem}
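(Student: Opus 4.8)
The plan is to follow the same architecture as the proof of Theorem~\ref{thhh1}, substituting the maximum-modulus dominance hypothesis \eqref{2LM2} for the characteristic dominance hypothesis \eqref{2} at the appropriate points. First I would verify that $A_p$ is transcendental: if $A_p$ were a polynomial, then $\log^+ M(r,A_p) = O(\log r)$, so \eqref{2LM2} would force $\log^+ M(r,A_j) = O(\log r)$ for all $j>p$ as well, making every coefficient from index $p$ onward a polynomial; combined with the minimality of $p$ this would make \emph{all} coefficients polynomials, contradicting the assumption that at least one is transcendental. Next, I would invoke the refined order-reduction method from Section~\ref{sec-reduction} (which I am allowed to assume) to pass from \eqref{lden} to a reduced equation whose coefficients are controlled by $M(r,A_p)$; the point of \eqref{2LM2} is exactly that $A_p$ dominates $A_{p+1},\dots,A_{n-1}$ in the $\log M$-scale, which is what the reduction needs to produce $n-p$ linearly independent solutions $f$ with $\log T(r,f) \gtrsim \log M(r,A_p)$ outside an exceptional set $E$ of finite linear measure. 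Since $\log M(r,A_p) \geq T(r,A_p)$ always, these solutions automatically satisfy \eqref{superior} and hence are rapid and of infinite order by \eqref{T}.

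The genuinely new half is the upper bound $\log T(r,f) \lesssim \log M(r,A_p)$. Here I would combine a logarithmic-derivative estimate with the differential equation itself: writing \eqref{lden} as $f^{(n)}/f = -\sum_{j=0}^{n-1} A_j f^{(j)}/f$ and estimating each $|f^{(j)}(z)/f(z)|$ on a suitable circle $|z|=r$ by a Wiman--Valiron or Gundersen-type bound, one gets $\log M(r,f) \lesssim \log^+ M(r, \max_j A_j) + (\text{lower order})$ off a small set. Because \eqref{2LM2} gives $\log^+ M(r,A_j) \lesssim \log^+ M(r,A_p)$ for $j > p$, and because the coefficients $A_0,\dots,A_{p-1}$ are ``small'' relative to $A_p$ in this scale (this needs to be extracted from the minimality of $p$ together with \eqref{2LM2}, or argued directly), the dominant term is $\log^+ M(r,A_p)$. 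Then $\log T(r,f) \leq \log\log M(r,f) + O(1) \lesssim \log\log^+ M(r,A_p)$... — wait, that would be too strong; rather the correct chain is $T(r,f) \le \log M(r,f)$, so $\log T(r,f) \le \log\log M(r,f)$, which is far below $\log M(r,A_p)$. The upper bound in \eqref{result-th1.3} should instead come from integrating the equation: $\log M(r,f)$ grows at most like $\exp$ of an integral of $\max_j M(\cdot,A_j)^{1/(n-j)}$, giving $\log T(r,f) \lesssim \log M(r,A_p)$ after taking logarithms twice and using \eqref{2LM2}. I would model this step precisely on the derivation of the upper bound in \eqref{3'} in the proof of Theorem~\ref{thhh1}, replacing the Poisson-type bound $\frac{R+r}{R-r}T(R,A_p)$ by the sharper $\log M(r,A_p)$ that the maximum-modulus hypothesis makes available.

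Finally, the deficiency conclusion is inherited almost verbatim from Theorem~\ref{thhh1}: the solutions $f$ constructed satisfy \eqref{superior}, so $T(r,A_j) \le T(r,A_p) \lesssim \log T(r,f) = o(T(r,f))$ for every $j$, i.e.\ $f$ is admissible in the sense of \eqref{admissibl}; then Wittich's theorem gives that $0$ is the only possible finite deficient value. I expect the main obstacle to be making the upper-bound argument rigorous while handling the exceptional set $E$ cleanly — in particular, ensuring that the set where the logarithmic-derivative estimates fail and the set where the lower bound $\log T(r,f) \gtrsim \log M(r,A_p)$ is asserted can be taken to be the \emph{same} finite-measure set (this is precisely the gap the authors identified in \cite[Theorem~5.6]{HX}, so care is needed here). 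A secondary technical point is confirming that $\log^+ M(r,\cdot)$ is well-behaved enough — it is continuous, nondecreasing, and convex in $\log r$ for entire functions — so that \eqref{2LM2} can be applied along the relevant sequence of radii without a ``second exceptional set'' creeping in.
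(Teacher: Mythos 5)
Your outline follows the paper's proof of Theorem~\ref{asymptotic-cor} in all essentials: the transcendence of $A_p$ is argued exactly as in the paper; the lower bound $\log M(r,A_p)\lesssim\log T(r,f)$ for at least $n-p$ basis elements is obtained by the same contradiction argument through the order-reduction representation of $-A_p$ (Lemma~\ref{coeff_Ap}) combined with the logarithmic-derivative estimate \eqref{remark1} and Lemma~\ref{5.3}; and the upper bound comes, as you eventually settle on after your self-corrected false start, from an a priori growth estimate --- the paper uses $T(r,f)\lesssim r\sum_j M(r,A_j)^{1/(n-j)}+1$ from \cite[Corollary~5.3]{HKR}, which bounds $T(r,f)$ directly so that only one logarithm is needed. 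After that one only has to know $\log^+M(r,A_j)\lesssim\log^+M(r,A_p)$ for $j<p$. You correctly flag that this comparison must be extracted from the minimality of $p$; in the paper it is proved by taking the \emph{largest} index $s<p$ violating it and showing that $s$ would then itself satisfy \eqref{2LM2}, contradicting minimality. That argument is short but not automatic, so it does need to be supplied.

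The one step that fails as written is your deficiency argument: you claim $T(r,A_j)\le T(r,A_p)\lesssim\log T(r,f)$ for every $j$, but under the hypothesis \eqref{2LM2} the dominance of $A_p$ is only in the $\log M$ scale, and $T(r,A_j)\le T(r,A_p)$ can be false. For instance, take $A_p=e^{e^z}$, so that $T(r,A_p)\asymp e^r/\sqrt{r}$ while $\log M(r,A_p)=e^r$, and an $A_j$ with $T(r,A_j)\sim\log M(r,A_j)\sim\tfrac12 e^r$; then \eqref{2LM2} holds with $\limsup=\tfrac12$, yet $T(r,A_j)\gg T(r,A_p)$. The repair is immediate and is exactly what the paper does: $T(r,A_j)\le\log M(r,A_j)\lesssim\log M(r,A_p)\asymp\log T(r,f)=o(T(r,f))$, so the rapid solutions are admissible in the sense of \eqref{admissibl} and Wittich's theorem applies. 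With that correction, and with the largest-bad-index argument supplied for the $j<p$ comparison, your proof coincides with the paper's.
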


Conclusions \eqref{3'} and \eqref{result-th1.3} both imply \eqref{superior}, and therefore Frei's theorem is a particular case of Theorems~\ref{thhh1} and \ref{asymptotic-cor}. At times,  Theorems~\ref{thhh1} and \ref{asymptotic-cor} give a larger number of linearly independent solutions of infinite order than Frei's theorem. Indeed, the transcendental coefficients $A_0(z)=e^{2z}$ and $A_1(z)=-(2e^z+1)$ in \eqref{ex-frei} satisfy \eqref{2} and \eqref{2LM2} for $p=0$,
and the $\limsup$ in \eqref{2} or in \eqref{2LM2} is equal to $1/2$.

The following examples show that neither of Theorems~\ref{thhh1} and  \ref{asymptotic-cor} implies the other in the cases when the coefficients are of finite hyper-order or of finite order.

\begin{example}\label{example01}
(i) Let $A_1(z) = e^{e^z}$, and let $A_0$ be an entire function satisfying
	$$
	T(r,A_0) \sim \log M(r,A_0) \sim 2 T(r,A_1), \quad r\to \infty.
	$$
Such a function $A_0 $ exists by Clunie's theorem \cite{C}. Moreover, 
			$$
			T(r,A_1) \asymp \frac{e^r}{\sqrt{r}} \quad\text{and} \quad \log M(r,A_1)=e^r,
			$$
see \cite[p.~7]{H}. 
Therefore,
	$$
	\limsup_{r\to \infty} \frac{T(r,A_1)}{T(r,A_0)} = \frac{1}{2} < 1.
	$$
By Theorem~\ref{thhh1}, every non-trivial solution $f$ of \eqref{2ode} satisfies
	$$
	\frac{e^r}{\sqrt{r}} \lesssim \log T(r,f).
	$$
However, the asymptotic inequality $ \log T(r,f) \lesssim \frac{e^r}{\sqrt{r}}$ does not hold for all solutions $f$. In fact, we have
	$$
	\limsup_{r\to\infty} \frac{\log M(r,A_1)}{\log M(r,A_0)}= \limsup_{r\to\infty} \frac{\log M(r,A_1)}{2 T(r,A_1)} = \infty.
	$$ 
Thus, by Theorem~\ref{asymptotic-cor}, every solution base of \eqref{2ode} has at least one solution $f_0$ satisfying $\log T(r,f_0) \asymp \log M(r,A_1) = e^r$. In particular, Theorem~\ref{thhh1} is stronger than Theorem~\ref{asymptotic-cor}
in the sense that the number of rapid solutions given by Theorem~\ref{thhh1} is larger than that given by Theorem~\ref{asymptotic-cor}.

 (ii) Now, let $A_0(z)=e^{e^z}$, and let $A_1(z)$ be an entire function satisfying
	$$
	T(r,A_1) \sim \log M(r,A_1) \sim T(r,A_0)\asymp \frac{1}{\sqrt{r}} \log M(r,A_0), \quad r\to\infty.
	$$
Therefore,
	$$
	\limsup_{r\to\infty} \frac{T(r,A_1)}{T(r,A_0)}=1
	$$
and
	$$
	\limsup_{r\to\infty} \frac{\log M(r,A_1)}{\log M(r,A_0)} = \limsup_{r\to\infty} \frac{T(r,A_0)}{\log M(r,A_0)} = 0 <1.
	$$
Thus, Theorem~\ref{asymptotic-cor} is stronger than Theorem~\ref{thhh1} in this case.
\end{example}

\begin{example}
\noindent (i) Let $A_0(z)=E_{1/\varrho}(z)$ be Mittag-Leffler's function of order $\varrho>1/2$. We have, by \cite[p.~19]{H},
	$$
	T(r,A_0) \sim  \frac{1}{\pi \varrho} \log M(r,A_0) \sim \frac{1}{\pi \varrho} r^\varrho, \quad r\to\infty.
	$$
From \cite{C}, there exists an entire function $A_1(z)$ satisfying
	$$
	T(r,A_1) \sim \log M(r,A_1) \sim T(r,A_0), \quad r\to \infty.
	$$
Therefore,
	$$
	\limsup_{r\to\infty}\frac{T(r,A_1)}{T(r,A_0)} = 1
	$$
and
	$$
	\limsup_{r\to\infty}\frac{\log M(r,A_1)}{\log M(r,A_0)} = \limsup_{r\to\infty} \frac{T(r,A_0)}{\log M(r,A_0)} = \frac{1}{\pi \varrho}<1.
	$$
Thus, Theorem~\ref{asymptotic-cor} is stronger than Theorem~\ref{thhh1} in this case.

(ii) Now, let $A_1(z) = E_{1/\varrho}(z)$, hence
	$$
	T(r,A_1) \sim  \frac{1}{\pi \varrho} \log M(r,A_1), \quad r\to\infty,
	$$
and let $A_0(z)$ be an entire function satisfying
	$$
	T(r,A_0) \sim \log M(r,A_0) \sim \pi \varrho T(r,A_1), \quad r\to \infty.
	$$
Therefore,
	$$
	\limsup_{r\to\infty}\frac{T(r,A_1)}{T(r,A_0)} = \frac{1}{\pi \varrho} < 1
	$$
and 
	$$
	\limsup_{r\to\infty}\frac{\log M(r,A_1)}{\log M(r,A_0)} = \limsup_{r\to\infty} \frac{\log M(r,A_1)}{\pi \varrho T(r,A_1)} = 1.
	$$
Thus, Theorem~\ref{thhh1} is stronger than Theorem~\ref{asymptotic-cor} in this case. 
\end{example}

Sometimes, we can detect the number of rapid solutions when one coefficient dominates the rest of the coefficients along a curve. To this end, let $g$ be an entire function, and let $\mathcal{M}_g := \{z \in \C : |g(z)|=M(|z|,g)\}$.
For example, if $g(z)=z$ then $\mathcal{M}_g = \C$, while if $g(z)= e^z$ then $\mathcal{M}_g= \R_+$.
 For any entire $g$, the set $\mathcal{M}_g$ contains at least one curve tending to infinity, although isolated points in $\mathcal{M}_g$ are also possible. Any curve in $\mathcal{M}_g$ tending to infinity is called a {\it maximum curve} for $g$. For more details, see \cite{T}.

\begin{theorem}\label{sdsd}
Let the coefficients $A_{0},\ldots,A_{n-1}$ in \eqref{lden} be entire functions. Suppose that
$p\in\{0,\ldots,n-1\}$ is the smallest index such  that $A_p$ is transcendental and 
	\begin{equation}\label{p-u-1}
	\limsup _{\substack{z \rightarrow \infty\\ z \in \Gamma}} \sum_{j=p+1}^{n-1} \frac{1}{\eta_j}\frac{\left|A_{j}(z)\right|^{\eta_j}}{\left|A_{p}(z)\right|}<1
	\end{equation}
holds for some constants $\eta_j>1$, where $\Gamma$ is a maximum curve for $A_p$. Then every solution base of \eqref{lden} has at least $n-p$ rapid solutions $f$ for which
	$$
	\log T(r,f) \gtrsim \log M(r,A_{p}),\quad r\not\in E,
	$$
where $E \subset [0,\infty)$ has finite linear measure.
\end{theorem}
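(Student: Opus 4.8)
The plan is to argue by contradiction, combining the refined order reduction of Section~\ref{sec-reduction} with pointwise estimates for logarithmic derivatives along the maximum curve $\Gamma$. Suppose that some solution base of \eqref{lden} contains at most $n-p-1$ rapid solutions in the asserted sense; then at least $p+1$ of its members fail $\log T(r,f)\gtrsim\log M(r,A_p)$, and after relabelling we take these to be $f_1,\ldots,f_{p+1}$. Applying the refined order reduction successively with $f_1,\ldots,f_p$, I would pass to a linear differential equation
\[
w^{(n-p)}+B_{n-p-1}\,w^{(n-p-1)}+\cdots+B_1\,w'+B_0\,w=0
\]
of order $n-p$, whose coefficients $B_0,\ldots,B_{n-p-1}$ are meromorphic with poles only among the zeros of $f_1,\ldots,f_p$ and which admits the images $w_1,\ldots,w_{n-p}$ of $f_{p+1},\ldots,f_n$ as a solution base. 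The point of using exactly $p$ of the non-rapid solutions is that the transcendental coefficient then lands at the bottom: $B_j=A_{p+j}+\Phi_j$ for $j=0,\ldots,n-p-1$, where each $\Phi_j$ is a finite combination, with polynomial coefficients in the $A_k$, of logarithmic derivatives of the successively reduced solutions built from $f_1,\ldots,f_p$; in particular $B_0=A_p+\Phi_0$.

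Next I would work on $\Gamma$. By the standard pointwise (Gundersen-type) estimates for logarithmic derivatives, applied to $f_1,\ldots,f_p$ and to $w_1$ and combined with a Borel-type lemma so that they may be expressed through $T(r,\cdot)$ rather than $T(\alpha r,\cdot)$, there is a set $E\subset[0,\infty)$ of finite linear measure, together with a union of discs about the zeros of $f_1,\ldots,f_p$ whose radial projection has finite measure, such that every logarithmic derivative entering the $\Phi_j$ is at most $r^{c}\,[T(r,f_i)]^{c}$ for $z\in\Gamma$ with $|z|=r\notin E$ lying outside those discs, for some constant $c$. Provided one can select an unbounded set $S$ of such radii along which $f_1,\ldots,f_{p+1}$ are \emph{simultaneously} slow relative to $M(r,A_p)$ --- the delicate point, discussed below --- the quantities $r^{c}[T(r,f_i)]^{c}$ are $o\bigl(M(r,A_p)\bigr)$ for $r\in S$; since $A_p$ is transcendental and $|A_p(z)|=M(r,A_p)$ on $\Gamma$, this forces $|\Phi_j(z)|=o\bigl(M(r,A_p)\bigr)$ there, whence $|B_0(z)|\ge\tfrac12M(r,A_p)$, while \eqref{p-u-1} gives $|A_{p+j}(z)|\le\bigl(\eta_j|A_p(z)|\bigr)^{1/\eta_j}$ and so $|B_j(z)|\le C\,M(r,A_p)^{1/\eta_j}$ with $1/\eta_j<1$.

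Finally I would feed the particular solution $w=w_1$, the image of $f_{p+1}$, into the reduced equation. Its growth is controlled by that of $f_1,\ldots,f_{p+1}$ through the order-reduction estimates, so $\log T(r,w)=o\bigl(\log M(r,A_p)\bigr)$ for $r\in S$ as well. Writing the reduced equation as $-B_0=w^{(n-p)}/w+\sum_{j=1}^{n-p-1}B_j\,w^{(j)}/w$, evaluating on $\Gamma$ over radii in $S$, and using the bounds on $B_0$, $B_j$ together with $|w^{(j)}(z)/w(z)|\le[T(r,w)]^{c}$, one obtains
\[
\tfrac12M(r,A_p)\ \le\ C\,[T(r,w)]^{c}\bigl(1+M(r,A_p)^{\theta}\bigr),\qquad\theta:=\max_{p<j\le n-1}\eta_j^{-1}<1,
\]
hence $M(r,A_p)^{1-\theta}\lesssim[T(r,w)]^{c}$, i.e.\ $\log M(r,A_p)\lesssim\log T(r,w)$ for $r\in S$, contradicting $\log T(r,w)=o\bigl(\log M(r,A_p)\bigr)$ on $S$. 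Therefore every solution base of \eqref{lden} has at least $n-p$ rapid solutions, and the same chain of inequalities, applied to each of them, yields $\log T(r,f)\gtrsim\log M(r,A_p)$ off a set of finite linear measure.

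The hard part is that \eqref{p-u-1} is one-sided and local: it bounds $A_{p+1},\ldots,A_{n-1}$ only on $\Gamma$, so no global control of the $B_j$ is available and the estimates must be run pointwise along $\Gamma$. Two issues are delicate: keeping the exceptional set --- the Gundersen discs, the zeros of $f_1,\ldots,f_p$, and the set from the Borel-type regularization of $T(r,\cdot)$ --- of finite linear measure while it still omits an unbounded set of radii carrying $\Gamma$; and producing the set $S$ along which $f_1,\ldots,f_{p+1}$ are simultaneously slow relative to $M(r,A_p)$, which is where the precise $\liminf$-type meaning of ``not rapid'' is used and, if necessary, an induction on $n-p$ (reducing by one non-rapid solution at a time) replaces the single reduction above. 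The bookkeeping for the refined order reduction in Section~\ref{sec-reduction} is tailored to make both points go through.
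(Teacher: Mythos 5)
Your proposal is correct and follows essentially the same route as the paper's proof: a contradiction argument that combines the order-reduction identity for $-A_p$ from Lemma~\ref{coeff_Ap} with pointwise logarithmic-derivative estimates evaluated on the maximum curve $\Gamma$, where \eqref{p-u-1} is used to absorb the terms involving $A_{p+1},\ldots,A_{n-1}$ (the paper packages this absorption via Young's inequality, $|A_jC_j|\le \tfrac{1}{\eta_j}|A_j|^{\eta_j}+\tfrac{1}{\eta_j^*}|C_j|^{\eta_j^*}$, rather than your $\eta_j$-th-root bound $|A_j(z)|\le(\eta_j M(r,A_p))^{1/\eta_j}$, but the computation is equivalent). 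The two delicate points you flag are treated no more elaborately in the paper: the pointwise estimates of Lemma~\ref{G-estimate} hold for \emph{all} $z$ on circles $|z|=r$ outside a radial exceptional set of finite measure, so they apply on $\Gamma$ automatically, and the simultaneity issue is handled by assuming from the outset a single set $F$ of infinite linear measure on which all $p+1$ solutions satisfy $\log T(r,f_{0,l})=o(\log M(r,A_p))$.
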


The condition \eqref{p-u-1} in Theorem~\ref{sdsd} does not restrict the growth of the coefficients globally, and therefore \eqref{p-u-1} does not imply the admissibility of the rapid solutions. 

Differing from the analogous situation in Theorem~\ref{asymptotic-cor}, the next example shows that the asymptotic comparability between $\log T(r,f)$ and $\log M(r,A_p)$ does not always occur in the conclusion of Theorem~\ref{sdsd}.
\begin{example}\label{ex2.4}
Consider the differential equation
	$$
	f'' + e^{-z^2} f' + e^z f=0.
	$$
Condition \eqref{p-u-1} clearly holds for $p=0$  along the positive real axis, which is the maximum curve for $e^z$. Thus, all non-trivial solutions $f$ satisfy
	$$
	\log T(r,f) \gtrsim \log M(r,e^z) = r.
	$$
However, the asymptotic inequality $\log T(r,f) \lesssim \log M(r,e^z)$ doesn't hold for all solutions. Indeed, according to Theorem~\ref{asymptotic-cor}, the equation above has at least one solution $f_0$ satisfying
	$$
	\log T(r,f_0) \asymp \log M(r,e^{-z^2}) =r^2.
	$$

\end{example}

The following example shows  that in some cases the number of linearly independent rapid solutions, given by Theorem~\ref{sdsd}, is larger than the number given by Theorem~\ref{thhh1} or Theorem~\ref{asymptotic-cor}.

\begin{example}
Consider the differential equation
	$$
	f'' + e^{-z} f' + e^z f=0.
	$$
Theorems~\ref{thhh1} and \ref{asymptotic-cor} both assert that each solution base contains at least one solution $f$ satisfying $\log T(r,f) \gtrsim r$. In contrast, the condition \eqref{p-u-1} holds for $p=0$ along the positive real axis. Thus, all non-trivial solutions $f$ satisfy $\log T(r,f) \gtrsim r$.
\end{example}

Finally, we give an example to show that our main results are refinements of Frei's theorem in the sense that the asymptotic comparability can sometimes be used to find more solutions of infinite order.

\begin{example}
The function $f_1(z) = e^{e^z}$ is an infinite order solution of the equation
	\begin{equation}\label{eq2}
	f'' + (e^{z^2}-e^z)f' - (e^{z^2+z}+e^z) f =0.
	\end{equation}
Let $f_2$ be any solution of \eqref{eq2} linearly independent to $f_1$.
Frei's theorem cannot be used to conclude that $f_2$ is of infinite order.
However, according to any of Theorems~\ref{thhh1}, \ref{asymptotic-cor} or \ref{sdsd}, $f_2$ must satisfy $\log T(r,f_2) \gtrsim T(r,e^{z^2}) \asymp r^2$. Meanwhile, $\log T(r,f_1) \asymp r$.    
\end{example}

\section{Results in the unit disc}\label{main-disc}
The next result is a unit disc counterpart of Theorem~\ref{thhh1}.

\begin{theorem}\label{th.1-ud-1}
Let the coefficients $A_0,\ldots,$ $A_{n-1}$ in \eqref{lden} be analytic functions in $\D$ such that at least one of them is admissible. Suppose that $p\in\{0,\ldots,n-1\}$ is the smallest index such that 
	\begin{equation}\label{i-u-2}
	\limsup_{r\to 1^-}\sum_{j=p+1}^{n-1}\frac{T(r,A_{j})}{T(r,A_{p})}<1.
	\end{equation}
Then $A_p$ is admissible, and every solution base of \eqref{lden} has at least $n-p$ rapid solutions $f$ for which
	\begin{equation}\label{3'-unit}
	T(r,A_{p}) \lesssim \log T(r,f) \lesssim \frac{R+r}{R-r} T(R,A_p), \quad r \not\in E,
	\end{equation}
where $E\subset[0,1)$ is a set with $\int_{E}\frac{dr}{1-r}<\infty$, and $0<r<R<1$.  For these solutions, the value $0$ is the only possible finite deficient value.
\end{theorem}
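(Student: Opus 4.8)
This is the unit-disc counterpart of Theorem~\ref{thhh1}, and the plan is to mirror the proof of Theorem~\ref{thhh1} through the standard dictionary between the plane and the disc: $r\to\infty$ is replaced by $r\to1^-$; an exceptional set of finite linear measure by a set $E\subset[0,1)$ with $\int_E\frac{dr}{1-r}<\infty$; the growth relation \eqref{T} by its unit-disc analogue recalled in Section~\ref{plane-sec} (for an admissible $g$ there is $F\subset[0,1)$ with $\int_F\frac{dr}{1-r}=\infty$ along which $T(r,g)/(-\log(1-r))\to\infty$); the lemma on the logarithmic derivative by its unit-disc version, valid off a set $E$ with $\int_E\frac{dr}{1-r}<\infty$; and the elementary estimates $T(r,g)\le\log M(r,g)\le\frac{R+r}{R-r}T(R,g)$ for $g$ analytic in $\D$, which account for the factor $\frac{R+r}{R-r}$ in \eqref{3'-unit}. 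The order reduction used below is the refined procedure of Section~\ref{sec-reduction}. The argument begins with the observation that $A_p$ is admissible: this parallels the proof in Theorem~\ref{thhh1} that $A_p$ is transcendental --- the inequality $\limsup<1$ in \eqref{i-u-2} forces $A_{p+1},\dots,A_{n-1}$ to be non-admissible, so an admissible coefficient must have index $<p$, and then the minimality of $p$ is contradicted by comparing that coefficient with the (non-admissible) later ones --- with the unit-disc analogue of \eqref{T} playing the role of \eqref{T}.

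The core of the proof is the lower bound in \eqref{3'-unit} together with the count. Fix a solution base $f_1,\dots,f_n$ and suppose, towards a contradiction, that at most $n-p-1$ of its members satisfy $T(r,A_p)\lesssim\log T(r,f)$; then there are $p+1$ linearly independent solutions $g_1,\dots,g_{p+1}$ with $\log T(r,g_i)=o(T(r,A_p))$ off sets $E_i$ with $\int_{E_i}\frac{dr}{1-r}<\infty$. Reducing the order of \eqref{lden} successively by $g_1,\dots,g_{p+1}$ via Section~\ref{sec-reduction} yields a linear differential equation of order $n-p-1$ whose coefficients are explicit combinations of $A_p,\dots,A_{n-1}$, the logarithmic derivatives $g_i^{(k)}/g_i$, and Wronskians of the $g_i$; in particular there is a coefficient of the reduced equation carrying $A_p$, and the unit-disc logarithmic-derivative lemma bounds its characteristic --- off a set $E$ with $\int_E\frac{dr}{1-r}<\infty$ --- by $\sum_{j=p+1}^{n-1}T(r,A_j)$ plus finitely many terms $O(\log^+T(r,g_i)+\log\frac{1}{1-r})$. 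Since $\log T(r,g_i)=o(T(r,A_p))$ off the $E_i$, and \eqref{i-u-2} gives $\sum_{j=p+1}^{n-1}T(r,A_j)\le\theta\,T(r,A_p)$ with $\theta<1$ for $r$ near $1$, this forces $T(r,A_p)=O(-\log(1-r))$ off a set with $\int\frac{dr}{1-r}<\infty$, hence --- by monotonicity of $T(r,A_p)$ --- that $A_p$ is non-admissible, contradicting the preceding step. Therefore at least $n-p$ members of the base satisfy $T(r,A_p)\lesssim\log T(r,f)$ off some $E$; these are the sought rapid solutions.

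For the upper bound $\log T(r,f)\lesssim\frac{R+r}{R-r}T(R,A_p)$ for such $f$, I would invoke a standard growth estimate for solutions of \eqref{lden} with analytic coefficients in $\D$ (see \cite{JH}), which controls $\log M(r,f)$, and hence $\log T(r,f)$, by a suitable maximum of the $M(r,A_j)$: the indices $j>p$ are absorbed into $A_p$ by \eqref{i-u-2}, the index $j=p$ contributes $\log M(r,A_p)\le\frac{R+r}{R-r}T(R,A_p)$, and for these particular solutions the indices $j<p$ are disposed of via the reduced equation, whose coefficients no longer involve $A_0,\dots,A_{p-1}$. Finally, that $0$ is the only possible finite deficient value of a rapid solution $f$ is the unit-disc form of Wittich's theorem (cf.\ \cite[p.~62]{Laine}): admissibility of $A_p$ and $T(r,A_p)\lesssim\log T(r,f)$ give $\log T(r,f)/(-\log(1-r))\to\infty$ along a set $F$ with $\int_F\frac{dr}{1-r}=\infty$, so $T(r,f)$ is unbounded and $f$ is admissible in the sense of \eqref{admissibl} with the unit-disc exceptional set (for $j\ge p$ because $T(r,A_j)\lesssim T(r,A_p)\lesssim\log T(r,f)=o(T(r,f))$, and for $j<p$ along a suitable sequence, which suffices for the deficiency relation); applying the Wittich argument to $f-a$ for finite $a\ne0$ then yields $\delta(a,f)=0$.

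I expect the decisive difficulty to be the order reduction in the second paragraph: iterating the reduction $p+1$ times while keeping the accumulated exceptional set in the class $\int_E\frac{dr}{1-r}<\infty$, and extracting from the reduced equation a bound on the relevant coefficient clean enough that \eqref{i-u-2} can force the non-admissibility of $A_p$; the bookkeeping of the logarithmic-derivative terms is the technical heart. A secondary delicate point is the verification that $A_p$ is admissible and, for the deficiency conclusion, that the non-dominant coefficients $A_0,\dots,A_{p-1}$ are themselves $o(T(r,f))$ along a suitable sequence.
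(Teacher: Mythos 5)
Your overall plan coincides with the paper's: transplant the proof of Theorem~\ref{thhh1} to the disc, replacing Lemma~\ref{5.3} by Lemma~\ref{5.3-unit} and the plane logarithmic-derivative lemma by its disc version; the "coefficient of the reduced equation carrying $A_p$" that you extract is exactly the identity $-A_p=C_n+A_{n-1}C_{n-1}+\cdots+A_{p+1}C_{p+1}$ of Lemma~\ref{coeff_Ap}, and your lower-bound/counting argument and the appeal to Wittich's argument for the deficiency statement are the paper's.

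The concrete gap is your treatment of the coefficients $A_0,\ldots,A_{p-1}$. For the upper bound in \eqref{3'-unit} you propose to ``dispose of the indices $j<p$ via the reduced equation, whose coefficients no longer involve $A_0,\ldots,A_{p-1}$.'' This cannot work as stated: the upper bound is asserted for members of a solution base of the \emph{original} $n$-th order equation, and the growth estimate one must apply (\cite[Corollary~5.3]{HKR}, the disc version of \eqref{T-max}) bounds $T(r,f)$ by a sum over all $n$ coefficients, including $A_0,\ldots,A_{p-1}$. The reduced equation is solved by the iterated derivative-quotients $f_{p,s}$, not by the original $f_{0,s}$, and Lemma~\ref{5.3-unit} only bounds the $f_{q,s}$ from above by the $f_{0,l}$ --- so nothing about the reduced equation controls $T(r,f)$ for $f$ in the original base. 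What the paper does instead, and what your proposal is missing, is the disc analogue of the argument for \eqref{DM}: from the minimality of $p$ one deduces $T(r,A_j)\lesssim T(r,A_p)$ for $0\le j\le p-1$ (otherwise the largest offending index $s<p$ would itself satisfy \eqref{i-u-2}). This one estimate is needed twice: to absorb the low-index terms in the growth estimate for the upper bound, and to get $T(r,A_j)=o(T(r,f))$ for \emph{all} $j$ so that Wittich's theorem applies --- your deferral of the case $j<p$ to ``a suitable sequence'' is precisely the point this lemma settles. A secondary caveat, which your sketch shares with the paper's: the disc analogue of \eqref{T} for admissible functions holds only along a set $F$ with $\int_F\frac{dr}{1-r}=\infty$, not for the full limit, so the steps where you divide by $T(r,A_p)$ and discard $\log\frac{1}{1-r}$ (both in showing $A_p$ is admissible and in closing the contradiction) require checking that this set $F$ meets the set along which the putative slow solutions are small in a set of infinite hyperbolic measure.
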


Analogously to the case of the complex plane, the asymptotic comparability between $\log T(r,f)$ and $ \log M(r,A_p)$ is considerable in the unit disc as well. Indeed, the unit disc counterpart of Theorem~\ref{asymptotic-cor} is given as follows.

\begin{theorem}\label{asymptotic-cor-u1}
Let the coefficients $A_0,\ldots,$ $A_{n-1}$ in \eqref{lden} be analytic in $\D$. Suppose that $p\in\{0,\ldots,n-1\}$ is the smallest index such that $A_p$ is admissible and
	\begin{equation}\label{2LM-u1}
	\limsup_{r\to 1^-}\sum_{j=p+1}^{n-1}\frac{\log^+ M(r,A_{j})}{\log^+ M(r,A_{p})}<1.
	\end{equation}
Then every solution base of \eqref{lden} has at least $n-p$ rapid solutions $f$ for which 
	\begin{equation}\label{conclusion2-disc}
	\log T(r,f) \asymp \log M(r,A_p),\quad r\not\in E,
	\end{equation}
where $E\subset[0,1)$ is a set with $\int_{E}\frac{dr}{1-r}<\infty$. For these solutions, the value $0$ is the only possible finite deficient value.
\end{theorem}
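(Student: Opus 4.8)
The plan is to argue exactly as for the plane counterpart Theorem~\ref{asymptotic-cor}, translating the exceptional-set condition ``finite linear measure'' into ``$\int_E\frac{dr}{1-r}<\infty$'' and the error terms of order $\log r$ into errors of order $-\log(1-r)$. There are three things to establish: (a) every solution base of \eqref{lden} has at least $n-p$ solutions $f$ with $\log T(r,f)\gtrsim\log M(r,A_p)$; (b) each such $f$ in addition satisfies $\log T(r,f)\lesssim\log M(r,A_p)$; and (c) for each such $f$ the value $0$ is the only possible finite deficient value. Granting (a) and (b) we obtain \eqref{conclusion2-disc}, and since $\log M(r,A_p)\ge T(r,A_p)$ these $f$ are rapid in the sense of \eqref{superior}.

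For (a) I would argue by contradiction using the refinement of the order reduction method from Section~\ref{sec-reduction}. Call a non-trivial solution \emph{slow} if $\log T(r,f)\gtrsim\log M(r,A_p)$ fails outside every $E\subset[0,1)$ with $\int_E\frac{dr}{1-r}<\infty$, and suppose some base had $p+1$ slow solutions $f_1,\dots,f_{p+1}$. Reducing the order of \eqref{lden} by one $p$ times, using $f_1,\dots,f_p$ successively, produces a linear differential equation of order $n-p$ with coefficients $B_0,\dots,B_{n-p-1}$ that are universal combinations of $A_p,\dots,A_{n-1}$ and of logarithmic derivatives of $f_1,\dots,f_p$, and whose solution base consists of the Wronskian quotients $g_j=W(f_1,\dots,f_p,f_{p+j})/W(f_1,\dots,f_p)$, $j=1,\dots,n-p$; by the growth estimates these $g_j$ are again slow. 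The key bookkeeping point is that the index shifts of the reduction place the ``$A_p$-content'' precisely in $B_0$: one gets $B_0=A_p+\mathcal{E}_0$ with $\log^+M(r,\mathcal{E}_0)\le(1-\delta+o(1))\log^+M(r,A_p)$ outside an exceptional set of the above kind, because every term of $\mathcal{E}_0$ carries either a product of coefficients of index $>p$ (controlled by \eqref{2LM-u1}, which bounds the \emph{sum} $\sum_{j>p}\log^+M(r,A_j)$) or a nontrivial logarithmic derivative of a slow $f_i$; since $A_p$ is admissible, hence $M(r,A_p)\to\infty$, this yields $\log^+M(r,B_0)=(1+o(1))\log^+M(r,A_p)$. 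Likewise each $B_i$ with $i\ge1$, whose coefficient content has only indices $>p$, obeys $\log^+M(r,B_i)\le(1-\delta+o(1))\log^+M(r,A_p)$ off the same kind of set. But the slow solution $g_1$ of the order-$(n-p)$ equation gives $B_0=-\sum_{i=1}^{n-p}B_i\,g_1^{(i)}/g_1$ (with $B_{n-p}:=1$), so $\log^+M(r,B_0)\le\max_i\big(\log^+M(r,B_i)+\log^+M(r,g_1^{(i)}/g_1)\big)+O(1)\le(1-\delta+o(1))\log^+M(r,A_p)$ along the sequence of radii witnessing the slowness of $g_1$. Comparing the two estimates for $\log^+M(r,B_0)$ and letting $r\to1^-$ is the desired contradiction, so at most $p$ solutions of any base are slow, which is (a).

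For (b) I would combine a Herold/Gronwall-type comparison estimate for analytic solutions of \eqref{lden} in $\D$ — something of the form $\log^+\log^+M(r,f)\lesssim\max_{0\le j\le n-1}\log^+M(r,A_j)+\log\frac1{1-r}+1$ off a set $E$ with $\int_E\frac{dr}{1-r}<\infty$ — with the fact that \eqref{2LM-u1}, the minimality of $p$, and the convexity of $r\mapsto\log^+M(r,A_j)$ with respect to $-\log(1-r)$ force $\max_{0\le j\le n-1}\log^+M(r,A_j)\lesssim\log^+M(r,A_p)$ off such a set; for $j>p$ this is \eqref{2LM-u1} directly, and for $j<p$ convexity rules out a configuration that is dominated along a sequence yet super-comparably large on a set of positive logarithmic size. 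Since $T(r,f)\le\log^+M(r,f)$ and $\log M(r,A_p)\to\infty$, this gives $\log T(r,f)\lesssim\log M(r,A_p)$ for every non-trivial solution, in particular those from (a), completing \eqref{conclusion2-disc}. For (c), if $f$ satisfies \eqref{conclusion2-disc} then $T(r,f)\ge M(r,A_p)^{c}$ for some $c>0$ and $r\notin E$, whereas $T(r,A_j)\le\log^+M(r,A_j)\lesssim\log^+M(r,A_p)$ for all $j$ by (b); as $\log M(r,A_p)\to\infty$ this forces $T(r,A_j)=o(T(r,f))$ for $j=0,\dots,n-1$, $r\notin E$, so $f$ is admissible in the sense of \eqref{admissibl} (unit disc form), and the unit disc version of Wittich's theorem applies.

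The main obstacle is the middle step, namely the refined order-reduction lemma of Section~\ref{sec-reduction}: one must bound $\log^+M(r,h^{(m)}/h)$ on circles $|z|=r$ for the slow solutions $h$ appearing in the reduction, which requires excising moduli of zeros of $h$, checking that the union of the exceptional sets over all reduction steps and all solutions used still has finite $\int\frac{dr}{1-r}$, and handling the passage from a radius $\rho\in(r,1)$, where such derivative estimates naturally live, back to $r$ (here slowness must be used along a suitably sparse sequence of radii). Getting the index bookkeeping right — so that the $A_p$-content lands in $B_0$ and not among $B_1,\dots,B_{n-p-1}$ — is the other point needing care; once both are secured, the contradiction in (a) and the deductions in (b) and (c) are routine.
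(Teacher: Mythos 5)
Your proposal follows essentially the same route as the paper: Theorem~\ref{asymptotic-cor-u1} is proved there by transplanting the three-step proof of its plane analogue Theorem~\ref{asymptotic-cor} to the disc, namely a contradiction argument built on the order-reduction identity \eqref{4} of Lemma~\ref{coeff_Ap} together with Lemma~\ref{5.3-unit} and Lemma~\ref{G-estimate-disc} for the lower bound, the solution growth estimate of \cite{HKR} plus domination of the lower-index coefficients for the upper bound, and admissibility plus Wittich's theorem for the deficiency claim. Your packaging of the reduction as an explicit order-$(n-p)$ equation with the $A_p$-content landing in $B_0$ is just the unexpanded form of the identity \eqref{4}, and your step (b), resting on the minimality of $p$, corresponds to the paper's argument for \eqref{DM}, so the two approaches coincide.
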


From \eqref{3'-unit} or \eqref{conclusion2-disc}, using \eqref{admissible-0}, we easily get that there are at least $n-p$ linearly independent solutions of infinite order. Thus the second formulation of Frei's theorem is a particular case of Theorems~\ref{th.1-ud-1} and  \ref{asymptotic-cor-u1}.

At times Theorems~\ref{th.1-ud-1} and \ref{asymptotic-cor-u1} give a larger number of linearly independent solutions of  infinite order than the second formulation of Frei's theorem. Indeed, the admissible coefficients $A_1(z)$ and $A_0(z)$ in \eqref{example-u-p} satisfy \eqref{i-u-2} and \eqref{2LM-u1} for $p=0$ and the $\limsup$ in both \eqref{i-u-2} and \eqref{2LM-u1} is equal to $1/2$.

The following example illustrates the differences between Theorems~\ref{th.1-ud-1} and \ref{asymptotic-cor-u1}, without restricting to any pre-given growth scale for the coefficients.

\begin{example}
(i) Let $\mu(r)$ and $\lambda(r)$ be two non-negative unbounded functions on the interval $[0,1)$ satisfying $\mu(r) = o(\lambda(r)),\; r\to 1^-$, and   the hypotheses in \cite[Theorem~I]{L}. Then there exist analytic functions $A_0$ and $A_1$ in $\D$ satisfying
	$$
	T(r,A_0) \sim  \mu(r) \sim 2T(r,A_1), \quad \log M(r,A_0) \sim \lambda(r) \sim \log M(r,A_1), \quad r\to 1^-.
	$$ 
Thus, by Theorem~\ref{th.1-ud-1}, all non-trivial solutions $f$ of 
	\begin{equation}\label{ex-disc}
	f'' + A_1  f' + A_0 f=0
	\end{equation}
satisfy $\log T(r,f) \gtrsim T(r,A_0) \sim \mu(r), \; r\to 1^-$. However, by Theorem~\ref{asymptotic-cor-u1}, there exists at least one solution $f_0$ of \eqref{ex-disc} satisfying $\log T(r,f_0) \asymp \log M(r,A_1) \sim \lambda(r), \; r\to 1^-$.
Since  $\mu(r) = o(\lambda(r)),\; r\to 1^-$, it follows that the upper bound of $\log T(r,f)$ in \eqref{3'-unit} cannot be reduced to $ T(r,A_0)$.

(ii) If we choose the analytic coefficients $A_0$ and $A_1$ in the following way
	$$
	T(r,A_0) \sim  \mu(r) \sim T(r,A_1), \quad \log M(r,A_0) \sim \lambda(r) \sim 2 \log M(r,A_1), \quad r\to 1^-,
	$$
then, by Theorem~\ref{th.1-ud-1}, each solution base of \eqref{ex-disc} has at least one solution $f$ satisfying $\log T(r,f) \gtrsim T(r,A_1) \sim \mu(r),\; r\to 1^-$. In contrast, Theorem~\ref{asymptotic-cor-u1} asserts that all non-trivial solutions $f$ of \eqref{ex-disc} satisfy $\log T(r,f) \asymp \log M(r,A_0) \sim \lambda(r), \; r\to 1^-$.
	
\end{example}

A maximum curve for an analytic function $g(z)$ in $\D$ is a curve emanating from the origin and tending to a point on $\partial \D$ and consists of points $z\in\D$ for which $\left|g(z)\right|=M\left(|z|, g\right)$.

\begin{theorem}\label{pointwise th}
Let the coefficients $A_0,\ldots,$ $A_{n-1}$ in \eqref{lden} be analytic functions in $\D$. Suppose that $p\in\{0,\ldots,n-1\}$ is the smallest index such that $A_p$ is admissible and
	\begin{equation*}\label{p-u-1-1}
\limsup _{\substack{z \rightarrow 1^-\\ z \in \Gamma}} \sum_{j=p+1}^{n-1} \frac{1}{\eta_j} \frac{\left|A_{j}(z)\right|^{\eta_j}}{\left|A_{p}(z)\right|}<1
	\end{equation*}
holds for some constants $\eta_j>1$, where $\Gamma$ is a maximum curve of $A_p$. Then every solution base of \eqref{lden} has at least $n-p$ rapid solutions $f$ for which 
	$$
	\log T(r, f) \gtrsim \log M\left(r, A_{p}\right)  ,\quad  r\not\in E,
	$$
where $E\subset[0,1)$ is a set with $\int_{E}\frac{dr}{1-r}<\infty$.
\end{theorem}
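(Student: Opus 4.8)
The plan is to argue by induction on $p$, removing one non-rapid solution at a time by order reduction, so that the substantive case becomes $p=0$; the argument then parallels that of Theorem~\ref{sdsd}, with the usual unit disc bookkeeping in which ``finite linear measure'' is replaced by ``$\int_E\frac{dr}{1-r}<\infty$''.

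For the base case $p=0$, so that $A_0$ is admissible and $\limsup_{z\to1^-,\,z\in\Gamma}\sum_{j=1}^{n-1}\frac1{\eta_j}\frac{|A_j(z)|^{\eta_j}}{|A_0(z)|}<1$ along a maximum curve $\Gamma$ of $A_0$, I would take an arbitrary non-trivial solution $f$ and evaluate \eqref{lden} at the points $z\in\Gamma$, where $|A_0(z)|=M(|z|,A_0)$; dividing by $f$ gives
\[
M(|z|,A_0)\le\left|\frac{f^{(n)}(z)}{f(z)}\right|+\sum_{j=1}^{n-1}|A_j(z)|\left|\frac{f^{(j)}(z)}{f(z)}\right|,\qquad z\in\Gamma.
\]
Then I would insert the pointwise estimates for logarithmic derivatives of analytic functions in $\D$, which give $|f^{(j)}(z)/f(z)|\le\Psi(|z|)^{j}$ for $z$ outside a union of exceptional discs, where $\Psi(r)$ is a fixed power of $\frac1{1-r}$ times a fixed power of $T(\varrho(r),f)$ with $\varrho(r)$ close to $1$, the exceptional discs projecting onto a set of radii of finite $\frac{dr}{1-r}$-measure; for $r$ outside that set the whole circle $|z|=r$, hence the point of $\Gamma$ on it, avoids the discs. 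Applying Young's inequality $xy\le\frac{x^{\eta_j}}{\eta_j}+\frac{y^{\eta_j/(\eta_j-1)}}{\eta_j/(\eta_j-1)}$ to each product $|A_j(z)|\Psi(|z|)^{j}$ and using the $\limsup$ hypothesis, the sum $\sum_{j\ge1}|A_j(z)|^{\eta_j}/\eta_j$ is absorbed into $c\,M(|z|,A_0)$ with $c<1$, leaving $M(r,A_0)\le\frac1{1-c}\bigl(\Psi(r)^{n}+C\,\Psi(r)^{N}\bigr)$ for some fixed $N$ and all $r$ outside a small set. Taking logarithms, and using that admissibility of $A_0$ makes $\log M(r,A_0)$ dominate $\log\frac1{1-r}$ on a set of infinite logarithmic measure, gives $\log M(r,A_0)\lesssim\log T(\varrho(r),f)$ there; a standard rescaling lemma upgrades this to $\log T(r,f)\gtrsim\log M(r,A_0)$ for $r\notin E$ with $\int_E\frac{dr}{1-r}<\infty$. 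Hence every non-trivial solution is rapid, which is the statement for $p=0$.

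For the inductive step I would assume the theorem for $p-1$ (and every order) and take a solution base $\{f_1,\dots,f_n\}$ under the hypotheses with index $p\ge1$; if all $f_i$ are rapid we are done, so suppose $f_1$ is not and reduce \eqref{lden} by $f_1$ to an equation of order $n-1$ with coefficients $B_i=\sum_{j=i+1}^{n}\binom{j}{i+1}A_j f_1^{(j-i-1)}/f_1$ as supplied by Section~\ref{sec-reduction}. I would then verify that this reduced equation again satisfies the hypotheses of Theorem~\ref{pointwise th}, now with smallest bad index $p-1$: on $\Gamma$ the terms of $B_{p-1}=A_p+\binom{p+1}{p}A_{p+1}f_1'/f_1+\cdots$ past $A_p$ are dominated by $A_p$, because $|A_{p+1}|,\dots,|A_{n-1}|$ are and because the logarithmic derivatives of the non-rapid $f_1$ grow more slowly than any positive power of $M(r,A_p)$; the same comparisons give $|B_{p-1}(z)|\ge\tfrac12 M(|z|,A_p)$ on $\Gamma$ off the zeros of $f_1$ and the exceptional discs, and keep $\sum_{j\ge p}\frac{1}{\eta_j'}\frac{|B_j|^{\eta_j'}}{|B_{p-1}|}$ at $\limsup<1$ after slightly enlarging the exponents to $\eta_j'>1$. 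By the inductive hypothesis the reduced equation has at least $(n-1)-(p-1)=n-p$ rapid solutions $w$, each of which is the image of some $f_i$ under the reduction map; writing $w=\frac{f_i}{f_1}\bigl(\frac{f_i'}{f_i}-\frac{f_1'}{f_1}\bigr)$ shows $T(r,w)\lesssim T(r,f_i)+T(r,f_1)+(\text{log-derivative errors})$ outside a small set, so rapidity of $w$ together with non-rapidity of $f_1$ forces $\log T(r,f_i)\gtrsim\log M(r,A_p)$, i.e.\ $f_i$ is rapid. Thus at least $n-p$ of $f_1,\dots,f_n$ are rapid, which closes the induction (the empty-sum convention for $p=n-1$ being compatible with this, as the chain of reductions then terminates at an order-one equation).

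The inductive step is mostly bookkeeping (the index shift, the enlarged exponents, the error terms from the logarithmic derivative lemma). The hard part will be the base case $p=0$: I expect the delicate point to be justifying the pointwise logarithmic derivative estimates \emph{along the maximum curve $\Gamma$}, i.e.\ controlling how $\Gamma$ sits relative to the exceptional discs, together with the fact, needed once $p\ge1$, that the merely non-admissible coefficients $A_0,\dots,A_{p-1}$ (which may still be unbounded near $\partial\D$) are negligible against $A_p$ on the relevant part of $\Gamma$; this last point relies on admissibility of $A_p$ forcing $M(r,A_p)$ to exceed every fixed negative power of $1-r$ on a set of infinite logarithmic measure, which outgrows the at most exponential-in-$\frac1{1-r}$ growth that a non-admissible coefficient can have.
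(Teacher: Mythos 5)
Your base case is essentially the paper's own argument specialized to $p=0$: evaluate \eqref{lden} at the point of the maximum curve on each circle, absorb $\sum_j|A_j|\,|f^{(j)}/f|$ via Young's inequality and the $\limsup$ hypothesis, and control the logarithmic derivatives pointwise off exceptional discs as in Lemma~\ref{G-estimate-disc}. The paper, however, never inducts on $p$: it handles general $p$ in a single step by assuming, for a contradiction, that $p+1$ members of the given solution base are slow, and then invoking the exact $p$-fold order-reduction identity \eqref{4} of Lemma~\ref{coeff_Ap}, $-A_p=C_n+A_{n-1}C_{n-1}+\cdots+A_{p+1}C_{p+1}$, in which the $C_k$ are differential polynomials in logarithmic derivatives of the iterated reductions of those $p+1$ solutions (estimated via Lemma~\ref{5.3-unit} and Lemma~\ref{G-estimate-disc}). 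The pointwise hypothesis along $\Gamma$ is then used exactly once, for the \emph{original} equation, precisely as in the proof of Theorem~\ref{sdsd}.

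Your inductive step has a genuine gap. The inductive hypothesis is Theorem~\ref{pointwise th} itself, whose assumption must be verified along a maximum curve of the \emph{new} dominant coefficient $B_{p-1}$; you only establish approximate domination along $\Gamma$, a maximum curve of $A_p$, and a maximum curve of $B_{p-1}$ need not bear any relation to $\Gamma$ --- on $B_{p-1}$'s own maximum curve you know nothing about the ratios $|B_j|^{\eta_j'}/|B_{p-1}|$. Moreover, the comparisons between $B_j$ and $A_j$ go through $f_1^{(k)}/f_1$, which is controlled only outside exceptional discs and only where the non-rapid $f_1$ is actually slow; non-rapidity yields slowness merely on a set of infinite $\tfrac{dr}{1-r}$-measure, not on a full punctured neighbourhood of $\partial\D$, so a condition of the form $\limsup_{z\to 1^-,\,z\in\Gamma'}(\cdots)<1$ along an entire curve cannot be extracted. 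The final transfer step inherits the same defect: since $\log T(r,f_1)=o(\log M(r,A_p))$ holds only on a set of infinite measure, rapidity of $f_i$ \emph{off a set of finite measure} does not follow from rapidity of $w$. Finally, the ``standard rescaling lemma'' you invoke to upgrade an estimate valid only where $\log M(r,A_0)$ dominates $\log\tfrac1{1-r}$ (again merely a set of infinite measure, by admissibility) to one valid off a set of finite measure is not justified; the paper's contradiction formulation, which works on the set where the putative slow solutions are slow, is designed to avoid having to perform such an upgrade. I recommend abandoning the induction and running the one-shot argument of Theorem~\ref{sdsd} with Lemmas~\ref{coeff_Ap}, \ref{5.3-unit} and \ref{G-estimate-disc}.
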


Similar to Example~\ref{ex2.4}, the following example shows that the comparability between $\log T(r,f)$ and $\log M(r,A_p)$ in Theorem~\ref{pointwise th} does not always occur.

\begin{example} Let $A_0$ and $A_1$ be admissible analytic functions in $\D$ defined by
	$$
	A_1(z) = \exp \left\{ \frac{-1}{(1-z)^{2\beta}}\right\}  \quad \text{and} \quad A_0(z)=\exp \left\{ \frac{1}{(1-z)^{\beta}}\right\}, \quad \beta>1.
	$$
Along the maximum curve for $A_0$, which is the line segment  $\Gamma = (0,1)$, we easily find 
	$$
	\limsup _{\substack{z \rightarrow 1^-\\ z \in \Gamma}}  \frac{1}{\eta}\frac{\left|A_{1}(z)\right|^{\eta}}{\left|A_{0}(z)\right|}=0,
	$$
for any $\eta>1$. Thus, according to Theorem~\ref{pointwise th}, all non-trivial solutions $f$ of \eqref{ex-disc} satisfy
	$$
	\log T(r,f) \gtrsim \log M(r,A_0) = \frac{1}{(1-r)^{\beta}}.
	$$
However, the asymptotic inequality $\log T(r,f) \lesssim \log M(r,A_0)$ does not hold for all solutions. Indeed, from Theorem~\ref{asymptotic-cor-u1}, there exist at least one solution $f_0$ satisfying 
	$$
	\log T(r,f_0) \asymp \log M(r,A_1) = \frac{1}{(1-r)^{2\beta}}.
	$$
\end{example}

Recall that the upper linear density of a set $E\subset[0,1)$ is given by
	$$
	\overline{d}(E) :=\limsup_{r\to 1^-}\frac{1}{1-r}{\int_{E \cap [r,1)} dr}.
	$$
It is clear  that $0\le \overline{d}(E) \le 1$ for any set $E \subset [0,1)$.
\begin{theorem}\label{s-int}
Let the coefficients $A_{0},\ldots,A_{n-1}$ in \eqref{lden} be analytic functions in $\D$. Suppose that $p\in\{0,\ldots,n-1\}$ is the smallest index such that  $A_p$ is admissible and
	\begin{equation}\label{limsup-int}
	\limsup_{r\to 1^-}  \sum_{j=p+1}^{n-1} \left( \frac{n-j}{n-p}\right) \frac{\displaystyle\int_0^{2\pi} |A_j(re^{i\theta})|^{\frac{1}{n-j}} d\theta}{\displaystyle\int_0^{2\pi} |A_p(re^{i\theta})|^{\frac{1}{n-p}} d\theta}<1.
	\end{equation} 
Then every solution base of \eqref{lden} has at least $n-p$ solutions $f$ for which
	\begin{equation*}
	\log T(r,f) \asymp \log {\displaystyle\int_0^{2\pi} |A_p(re^{i\theta})|^{\frac{1}{n-p}} d\theta}, \quad r\notin E,
	\end{equation*}
where $E\subset[0,1)$ is a set with $\overline{d}(E)<1$. These solutions are rapid in the sense of (I), and the value $0$ is their only possible finite deficient value.
\end{theorem}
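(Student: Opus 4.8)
The plan is to reduce to the ``fully dominant'' case $p=0$ by means of the refined order reduction of Section~\ref{sec-reduction}, settle that case by direct estimates, and then extract the remaining assertions from Jensen's inequality and a Wittich-type argument. Write $I(r):=\int_0^{2\pi}|A_p(re^{i\theta})|^{1/(n-p)}\,d\theta$; since $|g|^{s}$ is subharmonic for every $s>0$, such integrals are nondecreasing in $r$. Given a solution base $\{f_1,\dots,f_n\}$, I would call a solution \emph{small} if $\log T(r,f)=o(\log I(r))$ as $r\to1^-$ off the relevant exceptional set; the small solutions form a subspace. If it had dimension $\ge p+1$, applying the refined order reduction to $p+1$ linearly independent small solutions yields an equation of order $n-p-1$ whose coefficients are polynomial expressions in the $A_j$ and the iterated logarithmic derivatives of the small solutions; as the latter contribute negligibly to the pertinent circle integrals, this reduced equation again satisfies a dominance hypothesis of the form below with an admissible trailing coefficient, forcing all its solutions to be \emph{large} --- yet they lift to small solutions of \eqref{lden} linearly independent from the chosen ones, a contradiction. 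Hence at most $p$ members of any base are small, and reducing \eqref{lden} by exactly $p$ linearly independent small solutions (enlarging the small subspace arbitrarily if necessary) and invoking the $p=0$ case for the resulting order-$(n-p)$ equation --- whose trailing coefficient agrees with $A_p$ up to terms that do not affect $\log I(r)$ --- produces the desired $n-p$ solutions. The main obstacle is precisely this preservation of \eqref{limsup-int} (read with the shifted exponent $1/(n-p-j)$) under reduction, i.e.\ bounding $\int_0^{2\pi}|B_j(re^{i\theta})|^{1/(n-p-j)}\,d\theta$ for the reduced coefficients $B_j$ in terms of $I(r)$, which is exactly what Section~\ref{sec-reduction} is meant to supply, together with the bookkeeping that keeps $\overline{d}(E)<1$ and handles radii where $I(r)$ is merely comparable to a power of $\tfrac1{1-r}$.

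For the case $p=0$, consider $w^{(m)}+B_{m-1}w^{(m-1)}+\cdots+B_0w=0$ with $B_0$ admissible and $\displaystyle\limsup_{r\to1^-}\sum_{j=1}^{m-1}\frac{m-j}{m}\,\frac{\int_0^{2\pi}|B_j(re^{i\theta})|^{1/(m-j)}\,d\theta}{\int_0^{2\pi}|B_0(re^{i\theta})|^{1/m}\,d\theta}<1$, and put $J(r)=\int_0^{2\pi}|B_0(re^{i\theta})|^{1/m}\,d\theta$. For the lower bound I would solve the equation for $B_0$, raise to the power $1/m$, integrate over $|z|=r$, and estimate using $|a_1+\cdots+a_k|^{1/m}\le\sum|a_i|^{1/m}$, then Hölder with conjugate exponents $\tfrac{m}{m-j},\tfrac{m}{j}$ applied to $|B_j|^{1/m}|w^{(j)}/w|^{1/m}=(|B_j|^{1/(m-j)})^{(m-j)/m}(|w^{(j)}/w|^{1/j})^{j/m}$, and then Young's inequality; the hypothesis then lets one absorb $\sum_j\tfrac{m-j}{m}\int|B_j|^{1/(m-j)}\,d\theta$ into the left-hand side, leaving $J(r)\lesssim\sum_{j=1}^m\int_0^{2\pi}|w^{(j)}(re^{i\theta})/w(re^{i\theta})|^{1/j}\,d\theta$. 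Feeding in the unit-disc pointwise estimate for logarithmic derivatives at an auxiliary radius $s$ with $1-s\asymp1-r$ gives $\log J(r)\lesssim\log T(r,w)+\log\tfrac1{1-r}$ for $r$ off a small set, and Jensen's inequality applied to the convex function $\exp$ on $\{|B_0|\ge1\}$ yields $\log J(r)\ge\tfrac1m T(r,B_0)+O(1)$, which absorbs $\log\tfrac1{1-r}$ and gives $\log T(r,w)\gtrsim\log J(r)$.

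For the matching upper bound I would use an integrated (mean) form of the Herold--Pommerenke growth estimate for solutions in $\D$, established by successive approximations as in the proofs of the earlier theorems, which gives $\log T(r,w)\lesssim\log\int_0^{2\pi}\big(\sum_{j=0}^{m-1}|B_j(re^{i\theta})|^{1/(m-j)}\big)\,d\theta+O(\log\tfrac1{1-r})$; since $\tfrac{m-j}{m}\ge\tfrac1m$, the hypothesis forces $\int|B_j|^{1/(m-j)}\,d\theta\lesssim J(r)$ for each $j$, so the right-hand side is $\lesssim\log J(r)$. Thus every non-trivial solution of the order-$m$ equation satisfies $\log T(r,w)\asymp\log J(r)$ for $r\notin E$ with $\overline{d}(E)<1$, and unwinding the reduction yields $n-p$ linearly independent solutions $f$ of \eqref{lden} with $\log T(r,f)\asymp\log I(r)$ off such a set.

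Finally, the Jensen-type lower bound used for $p=0$, applied to $A_p$ itself, gives $\log I(r)\ge\tfrac1{n-p}T(r,A_p)+O(1)$ once $T(r,A_p)>n-p$ (which holds eventually, $A_p$ being admissible), whence $T(r,A_p)\lesssim\log I(r)\asymp\log T(r,f)$: these solutions are rapid in the sense of (I). The same bound applied to each $A_j$ with $j>p$, together with \eqref{limsup-int} (which forces $\int_0^{2\pi}|A_j(re^{i\theta})|^{1/(n-j)}\,d\theta\lesssim I(r)$), yields $T(r,A_j)\lesssim\log I(r)\asymp\log T(r,f)$; since also $T(r,A_j)=O(\log\tfrac1{1-r})\lesssim\log I(r)$ for $j<p$ and $\log T(r,f)=o(T(r,f))$, we conclude $T(r,A_j)=o(T(r,f))$ for every $j$ off the exceptional set, so $f$ is admissible in the sense of \eqref{admissibl}. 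A Wittich-type argument (as in the proofs of the earlier theorems, using that the complement of the exceptional set accumulates at $1$) then shows that $0$ is the only possible finite deficient value of $f$.
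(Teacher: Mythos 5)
Your core estimates are the right ones --- the identity of Lemma~\ref{coeff_Ap}, Young's inequality with conjugate exponents $\tfrac{n-p}{n-j}$ and $\tfrac{n-p}{j-p}$ (which is exactly where the weights $\tfrac{n-j}{n-p}$ in \eqref{limsup-int} come from), the circle-integral logarithmic-derivative estimate of Lemma~\ref{C-estimate}, the growth estimate of \cite[Corollary~5.3]{HKR} for the upper bound, and Jensen plus a Wittich-type argument for rapidity and deficiency. But the architecture you wrap them in has a genuine gap. You reduce to the case $p=0$ by dividing out $p$ (or $p+1$) small solutions and then need the reduced equation of order $n-p$ to satisfy the hypothesis \eqref{limsup-int} with the shifted exponents. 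You flag this as ``the main obstacle'' and assert that Section~\ref{sec-reduction} supplies it; it does not. Lemma~\ref{coeff_Ap} only provides the single identity $-A_p=C_n+A_{n-1}C_{n-1}+\cdots+A_{p+1}C_{p+1}$; it says nothing about the integral means of the reduced coefficients $A_{q,j}$. Verifying \eqref{limsup-int} for the reduced equation is genuinely problematic: each $A_{p,j}$ is $A_{p+j}$ plus cross terms $A_kC$ with $k>p+j$, and after applying Young's inequality the coefficient of $\int_0^{2\pi}|A_k|^{1/(n-k)}d\theta$ accumulates contributions from every $j<k-p$, so the resulting sum need not stay below $1$ even though the original one does. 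In addition, the contradiction you describe (``all solutions of the reduced equation are large, yet they lift to small solutions linearly independent from the chosen ones'') does not parse: the solutions $f_{p+1,s}$ of the reduced equation are built from \emph{all} the original basis elements, not only the small ones, and producing further small solutions would not contradict anything. A smaller but real issue is the claim that the small solutions form a subspace: smallness is defined off an exceptional set of upper density $<1$ (equivalently on a set of upper density $1$), and the intersection of two sets of upper density $1$ need not have upper density $1$, so closure under addition is not automatic.

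The fix is to avoid the reduction-to-$p=0$ detour entirely, which is what the paper does: assume $p+1$ members $f_{0,1},\ldots,f_{0,p+1}$ of the given base are small, apply Lemma~\ref{coeff_Ap} \emph{once} to the full equation to write $|A_p|^{1/(n-p)}\le\sum_{j>p}|A_jC_j|^{1/(n-p)}+|C_n|^{1/(n-p)}$, integrate over the circle, use Young's inequality to split off $\tfrac{n-j}{n-p}\int_0^{2\pi}|A_j|^{1/(n-j)}d\theta$ (absorbed into the left side by \eqref{limsup-int}) and $\int_0^{2\pi}|C_k|^{1/(k-p)}d\theta$ (controlled via Lemma~\ref{C-estimate} and Lemma~\ref{5.3-unit} by $T(r,f_{0,l})$, $l\le p+1$, plus $\log\tfrac1{1-r}$), and contradict \eqref{adm-sint}. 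This never requires the reduced equation to satisfy any dominance hypothesis, and it directly bounds the number of small solutions \emph{within the given base}, which is what the theorem asks for. Your $p=0$ computation is essentially this estimate; it is the surrounding induction on $p$ that cannot be carried out as described.
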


The quantities
	\begin{equation}\label{p-mean}
	\int_0^{2\pi} |A_j(re^{i\theta})|^{\frac{1}{n-j}} d\theta
	\end{equation}
are used  to measure the growth of the coefficients $A_0, \ldots, A_{n-1}$  in results parallel to Theorem~\ref{s-int} in \cite{CHR}. Note that the assumption \eqref{limsup-int} is more delicate than the corresponding assumptions on the orders of growth in \cite{CHR}.

To see that the second formulation of Frei's theorem is a particular case of Theorem~\ref{s-int}, we first make use of Lemma~2 in \cite[p.~52]{JH2}, which allows us to avoid the exceptional set $E$ of density $\overline{d}(E)<1$. Second, we notice that if $A_p$ is admissible, then
	\begin{equation}\label{adm-sint}
	\limsup_{r\to 1^-} \frac{\log {\displaystyle\int_0^{2\pi} |A_p(re^{i\theta})|^{\frac{1}{n-p}} d\theta}}{- \log (1-r)} = \infty.
	\end{equation}
Indeed, it follows from Jensen's inequality that
	\begin{align}\label{r.solI}
	\log^+ {\int_0^{2\pi} |A_p(re^{i\theta})|^{\frac{1}{n-p}} d\theta} &\gtrsim m(r,A_p)=T(r,A_p).
	\end{align}
Therefore, making use of \eqref{admissible-0} yields \eqref{adm-sint}.

The previous results require the existence of at least one coefficient of \eqref{lden} being admissible, which works with the second formulation of Frei's theorem. In the following we require that at least one of the coefficients is not in $\A^{-\infty}$, which is more suitable for the first formulation of Frei's theorem.

\begin{theorem}\label{sdsd2-disc}
Let the coefficients $A_{0},\ldots,A_{n-1}$ in \eqref{lden} be analytic functions in $\D$ such that at least one of them does not belong to $\A^{-\infty}$. Suppose that $p\in\{0,\ldots,n-1\}$ is the smallest index such that 
	\begin{equation}\label{con1}
	\limsup_{r\to 1^-}  \sum_{j=p+1}^{n-1} \left( \frac{n-j}{n-p}\right) \frac{\displaystyle\int_{D(0,r)} \left| A_j(z) \right|^{\frac{1}{n-j}} dm(z)}{\displaystyle\int_{D(0,r)} \left| A_p(z) \right|^{\frac{1}{n-p}} dm(z)}<1.
	\end{equation} 
Then $A_p\notin \A^{-\infty}$, and every solution base of \eqref{lden} has at least $n-p$ solutions $f$ for which
	\begin{equation}\label{conclusion2}
	\log T(r,f) \asymp \log \int_{D(0,r)} \left| A_p(z) \right|^{\frac{1}{n-p}} dm(z), \quad r\notin E,
	\end{equation}
where $E\subset[0,1)$ is a set with $\int_{E}\frac{dr}{1-r}<\infty$. These solutions are rapid in the sense of (II), and the value $0$ is their only possible finite deficient value.
\end{theorem}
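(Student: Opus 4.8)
The plan is to prove Theorem~\ref{sdsd2-disc} by the scheme used for the other unit--disc results in Section~\ref{proof-disc}, with the area integrals $\Phi_j(r):=\int_{D(0,r)}|A_j(z)|^{1/(n-j)}\,dm(z)$ playing the role that \eqref{p-mean} plays elsewhere. First I would record two routine facts. (a) For analytic $g$ in $\D$ one has $g\in\A^{-\infty}$ if and only if $\log\int_{D(0,r)}|g(z)|^{1/(n-j)}\,dm(z)=O(-\log(1-r))$: the ``only if'' is the trivial estimate coming from $|g(z)|\lesssim(1-|z|)^{-q}$, and the ``if'' follows from the sub--mean--value property of the subharmonic function $|g|^{1/(n-j)}$, which propagates a polynomial bound on the mean to a polynomial bound on $|g(z)|$. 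Combining this with \eqref{con1}, the minimality of $p$, and the hypothesis that some coefficient lies outside $\A^{-\infty}$ gives $A_p\notin\A^{-\infty}$, equivalently $\limsup_{r\to1^-}\frac{\log\Phi_p(r)}{-\log(1-r)}=\infty$; this is precisely rapidity in the sense of (II). (b) Jensen's inequality, as in \eqref{r.solI}, gives $\log^+\Phi_j(r)\gtrsim m(r,A_j)=T(r,A_j)$, while \eqref{con1} gives $\Phi_j(r)\lesssim\Phi_p(r)$ for $j>p$, hence $T(r,A_j)\lesssim\log\Phi_p(r)$ for $j>p$. Exceptional sets are handled throughout with the bookkeeping $\int_E\frac{dr}{1-r}<\infty$, aided where necessary by a regularity lemma of the type \cite[Lemma~2]{JH2}.

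For the lower bound I would induct on $p$ (uniformly in $n$), showing that \eqref{lden} cannot possess $p+1$ linearly independent \emph{slow} solutions, where $f$ is slow if $\log T(r,f)\gtrsim\log\Phi_p(r)$ fails for $r$ outside a set $E$ with $\int_E\frac{dr}{1-r}<\infty$. The base case $p=0$ is the crux: here $A_0$ dominates all other coefficients in the sense of \eqref{con1}, and one must show that \emph{every} nontrivial solution $f$ satisfies $\log M(r,f)\asymp\Phi_0(r)$, $r\notin E$. The upper half $\log M(r,f)\lesssim\sum_j\Phi_j(r)\lesssim\Phi_0(r)$ is a Herold/Valiron--type comparison in the spirit of \cite{CHR}; the lower half $\log M(r,f)\gtrsim\Phi_0(r)$ is obtained by a Wiman--Valiron argument along a maximum curve of $f$, in which the exponents $\tfrac1{n-j}$ in the $\Phi_j$ are exactly what forces the $A_0$--term to dominate the contributions of $A_1,\dots,A_{n-1}$. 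Taking logarithms yields $\log T(r,f)\asymp\log\Phi_0(r)$ off $E$, which is far stronger than the non--existence of slow solutions. For the inductive step, assume $p\ge1$ and that $g_1,\dots,g_{p+1}$ are linearly independent slow solutions of \eqref{lden}. Using the refined order reduction of Section~\ref{sec-reduction}, reduce the order by the single solution $g_1$ to obtain an order--$(n-1)$ equation $M_1[w]=0$. The refinement guarantees that, up to errors governed by the growth of $g_1$ and hence negligible on the logarithmic scale, the area integral attached to the slot of $M_1$ that inherits $A_p$ is comparable to $\Phi_p$ and dominates the higher slots, while the lower slots inherit the failure of \eqref{con1}; thus $p-1$ is the smallest index for which \eqref{con1} holds for $M_1$, and $M_1$ still has a coefficient outside $\A^{-\infty}$. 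The reduction map $w\mapsto f$ transfers solutions of $M_1$ to solutions of \eqref{lden} and slow solutions to slow solutions in both directions (again modulo the negligible $g_1$--error), so the images of $g_2,\dots,g_{p+1}$ are $p$ linearly independent slow solutions of $M_1$, contradicting the inductive hypothesis. Hence every solution base of \eqref{lden} has at least $n-p$ non--slow solutions; carrying the base--case estimate back through the $p$ successive reductions gives both inequalities in \eqref{conclusion2} for these solutions, which are therefore rapid in the sense of (II), and in particular $T(r,f)$ is unbounded.

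For the deficiency statement, fix one of these rapid solutions $f$ and a finite $a\ne0$. I would first show $f$ is admissible, i.e. $T(r,A_j)=o(T(r,f))$ for all $j$ off a set $E$ with $\int_E\frac{dr}{1-r}<\infty$: for $j\ge p$ this is immediate from $T(r,A_j)\lesssim\log\Phi_p(r)\lesssim\log T(r,f)$, and for $j<p$ it follows from the minimality of $p$ together with the lower bound $\log T(r,f)\gtrsim\log\Phi_p(r)$, which forces the low--index coefficients to be dominated as well along the relevant sequences. Then, writing $g=f-a$, equation \eqref{lden} reads $g^{(n)}+A_{n-1}g^{(n-1)}+\cdots+A_1g'+A_0g=-aA_0$; dividing by $g$ and passing to proximity functions gives
\[
m\!\left(r,\frac{1}{f-a}\right)\le 2T(r,A_0)+\sum_{j=1}^{n-1}T(r,A_j)+\sum_{j=1}^{n}m\!\left(r,\frac{(f-a)^{(j)}}{f-a}\right)+O(1).
\]
By the lemma on the logarithmic derivative in $\D$ the last sum equals $S(r,f-a)=S(r,f)$, which is $o(T(r,f))$ off a set $E$ with $\int_E\frac{dr}{1-r}<\infty$; by admissibility so are the remaining terms. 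Hence $m(r,1/(f-a))=o(T(r,f))$, so $\delta(a,f)=0$, and since $T(r,f)$ is unbounded, $0$ is the only possible finite deficient value of $f$.

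The principal obstacle is the base case $p=0$, and within it the lower estimate $\log M(r,f)\gtrsim\Phi_0(r)$: naive Nevanlinna bounds are too lossy, since the characteristics of the subdominant coefficients are only logarithmically smaller than that of $A_0$, so one is forced to argue pointwise via the central index / Wiman--Valiron theory and to use the exponents $\tfrac1{n-j}$ essentially. Secondary technical points are the control of the reduction errors and of the various exceptional sets of measure $\int_E\frac{dr}{1-r}<\infty$ through the $p$--fold induction, and the verification of the $j<p$ case of admissibility in the deficiency part.
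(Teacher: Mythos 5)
The core of your argument --- the lower bound in \eqref{conclusion2} --- rests on a claim that is both unjustified and stronger than what is needed. In your base case $p=0$ you assert that \emph{every} nontrivial solution satisfies $\log M(r,f)\gtrsim\int_{D(0,r)}|A_0(z)|^{1/n}\,dm(z)$ (no logarithm on the right-hand side), to be proved ``by a Wiman--Valiron argument along a maximum curve of $f$''. Wiman--Valiron theory compares the central index of $f$ with \emph{pointwise} values $|A_j(z)|$ near maximum-modulus points of $f$; hypothesis \eqref{con1} controls only \emph{area integrals} of the coefficients and gives no pointwise domination, so the comparison cannot even be set up, and there is no reason the maximum curve of $f$ should see the mass of $|A_0|^{1/n}$. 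Moreover, the target \eqref{conclusion2} compares $\log T(r,f)$ with the \emph{logarithm} of the area integral, i.e.\ it lives on the double-log scale, and precisely for this reason your diagnosis that ``naive Nevanlinna bounds are too lossy'' is mistaken: the paper proves the lower bound by assuming $p+1$ slow solutions, writing $-A_p=C_n+A_{n-1}C_{n-1}+\cdots+A_{p+1}C_{p+1}$ (Lemma~\ref{coeff_Ap}), applying Young's inequality with conjugate exponents $\tfrac{n-p}{n-j}$ and $\tfrac{n-p}{j-p}$ to split $|A_jC_j|^{1/(n-p)}$, absorbing the $A_j$-terms by \eqref{con1}, and bounding $\log^+\int_{D(0,r)}|C_k(z)|^{1/(k-p)}\,dm(z)$ by $\sum_{l}T(r,f_{0,l})+\log\tfrac{1}{1-r}$ via the integrated logarithmic-derivative estimate (Lemma~\ref{CGHR-estimate}) together with Lemma~\ref{5.3-unit}. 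Your $p$-fold induction via single-step order reduction is a reorganisation of the same identity, but it adds the unverified burden of showing at each stage that the reduced equation again satisfies \eqref{con1} with the index decreased by one --- a statement about area integrals of the new coefficients, which involve logarithmic derivatives of the reducing solution, that you do not substantiate and that the one-shot contradiction argument avoids.

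A second, smaller gap is in the deficiency part. To get $T(r,A_j)=o(T(r,f))$ you invoke the bound $T(r,A_j)\lesssim\log\int_{D(0,r)}|A_p(z)|^{1/(n-p)}\,dm(z)$, but Jensen's inequality only yields $T(r,A_j)\lesssim\frac{1}{R-r}\log^+\int_{D(0,R)}|A_j(z)|^{1/(n-j)}\,dm(z)$ for $r<R<1$, and in the unit disc the factor $(R-r)^{-1}$ is not harmless. The paper needs a full page here: it takes $R=r+\frac{1-r}{e\sqrt{T(r,f)}}$, shows via Cartan's identity and Borel's lemma that the exceptional set pulls back to a set $\tilde E$ with $\int_{\tilde E}\frac{dr}{1-r}<\infty$, and uses the infinite order of $f$ on a set of infinite hyperbolic measure to beat the remaining factor $(1-r)^{-1}$. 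Your one-line admissibility claim skips all of this.
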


To see that the first formulation of Frei's theorem is a particular case of Theorem~\ref{sdsd2-disc}, it suffices to prove the following claim: Suppose that $g(z)$ is an analytic function in $\D$. Then  $g\notin\A^{-\infty}$ if and only if, for any $\kappa \in (0,1)$, 
	\begin{equation}\label{int-tran}
	\limsup_{r\to 1^-}\frac{\displaystyle \log^+\int_{D(0,r)}|g(z)|^\kappa dm(z)}{-\log (1-r)} = \infty.
	\end{equation}

To prove this claim, we modify \cite[Example~5.4]{HKR3}.
First, assume that $g \notin \A^{-\infty}$ and that \eqref{int-tran} does not hold, i.e., there exist $r_0 \in (0,1)$, $\kappa\in (0,1)$ and $C>0$ such that
	\begin{equation} \label{eq1}
	\int_{D(0,r)}|g(z)|^\kappa dm(z) \le  \frac{1}{(1-r)^C}, \quad r \in (r_0,1).
	\end{equation}
Using sub-harmonicity, we obtain
	$$
	|g(z)|^\kappa \leq \frac{1}{2 \pi} \int_{0}^{2 \pi}\left|g\left(z+t e^{i \theta}\right)\right| ^\kappa d \theta,  \quad 0<t< 1-|z|.
	$$
Multiplying both sides by $t$ and integrating from $0$ to $\frac{1-|z|}{2}$, it follows
	\begin{align*}
	\frac{1}{2}\left(\frac{1-|z|}{2}\right)^{2}|g(z)|^\kappa  \le \frac{1}{2\pi} \int_{D\left(0,\frac{1+|z|}{2}\right)} |g(\xi)|^\kappa dm(\xi).
	\end{align*}
Therefore, making use of \eqref{eq1} yields
	$$
	|g(z)| \lesssim \frac{1}{(1-|z|)^D}, \quad D=\frac{C+2}{\kappa},
	$$
which implies that $g\in \A^{-\infty}$ and this is a contradiction.  Conversely, if $g \in \A^{-\infty}$, then the $\limsup$ in \eqref{int-tran} is clearly finite.


\section{Lemmas on the order reduction method} \label{sec-reduction}

Lemma~\ref{5.3} below appears in \rm \cite[p.~234]{HX} but the proof is written in Chinese. 
A difference analogue of this lemma will be given as  Lemma~\ref{d5.3} below. The reader should have no problem in verifying Lemma~\ref{5.3} by studying the proof of Lemma~\ref{d5.3} and using the lemma on the logarithmic derivative instead of the lemma on the logarithmic 
difference. 

\begin{lemma}\label{5.3}
{\rm (\cite[Lemma~5.3]{HX})}
Suppose that $f_{0,1},\ldots,f_{0,n}$ are linearly independent 
meromorphic functions. Define inductively
	\begin{equation}\label{lin-indep-sols}
	f_{q,s}=\left(\frac{f_{q-1,s+1}}{f_{q-1,1}}\right)',\quad 1\leq q\leq n-1,\ 1\leq s\leq n-q.
	\end{equation}
Then
	\begin{equation}\label{01}
	T(r,f_{q,s})\lesssim \sum_{l=1}^{q+s} T(r,f_{0,l})+\log r,\quad r\not \in E_1,\nonumber
	\end{equation}
where $E_1\subset[0,\infty)$ is a set of finite linear measure. 
\end{lemma}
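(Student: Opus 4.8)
\textbf{Proof plan for Lemma~\ref{5.3}.}

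The plan is to proceed by induction on $q$, controlling at each stage the characteristic function of the new family $\{f_{q,s}\}_{1\le s\le n-q}$ in terms of the characteristics of the previous family $\{f_{q-1,s}\}$, with the extra $\log r$ term absorbed uniformly along the way. For the base case $q=0$ there is nothing to prove, since $T(r,f_{0,s})\le \sum_{l=1}^{s}T(r,f_{0,l})$ trivially. For the inductive step, fix $q$ with $1\le q\le n-1$ and $1\le s\le n-q$, and write $f_{q,s}=(f_{q-1,s+1}/f_{q-1,1})'$. The idea is to estimate this in two moves: first bound $T(r,f_{q-1,s+1}/f_{q-1,1})$ by $T(r,f_{q-1,s+1})+T(r,f_{q-1,1})+O(1)$ using the standard rules for characteristic functions of quotients; second, pass to the derivative. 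Differentiation is where the logarithmic derivative lemma enters: writing $g=f_{q-1,s+1}/f_{q-1,1}$, one has $g'=g\cdot(g'/g)$, so
\begin{equation*}
T(r,g')\le T(r,g)+m\!\left(r,\frac{g'}{g}\right)+N(r,g')\le 2\,T(r,g)+m\!\left(r,\frac{g'}{g}\right)+O(\log r),
\end{equation*}
and the lemma on the logarithmic derivative gives $m(r,g'/g)=O(\log(rT(r,g)))$ outside a set of finite linear measure, which is in turn $O(T(r,g)+\log r)$ off an exceptional set. Combining, $T(r,f_{q,s})\lesssim T(r,f_{q-1,s+1})+T(r,f_{q-1,1})+\log r$ outside a set of finite linear measure.

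Now feed in the inductive hypothesis: $T(r,f_{q-1,s+1})\lesssim \sum_{l=1}^{(q-1)+(s+1)}T(r,f_{0,l})+\log r=\sum_{l=1}^{q+s}T(r,f_{0,l})+\log r$ and $T(r,f_{q-1,1})\lesssim \sum_{l=1}^{q}T(r,f_{0,l})+\log r$, both outside sets of finite linear measure. Since $q\le q+s$, the second sum is dominated by the first, and we obtain $T(r,f_{q,s})\lesssim \sum_{l=1}^{q+s}T(r,f_{0,l})+\log r$ outside a set of finite linear measure. Finally, the exceptional sets accumulated over all finitely many pairs $(q,s)$ (there are at most $n^2$ of them) are combined into a single set $E_1\subset[0,\infty)$, which still has finite linear measure; this justifies the uniform statement. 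One should also note at the outset that each $f_{q,s}$ is meromorphic and not identically zero — this follows because the $f_{0,l}$ are linearly independent, so the relevant Wronskian-type determinants do not vanish identically, ensuring the quotients and their derivatives are well-defined meromorphic functions and the Nevanlinna-theoretic estimates apply.

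The main obstacle is bookkeeping rather than anything deep: one must verify carefully that the upper index in the inductive hypothesis increases exactly by one when passing from $f_{q-1,\bullet}$ to $f_{q,\bullet}$ (the shift $s\mapsto s+1$ in the numerator compensates precisely for the decrement $q-1\mapsto q$), so that the bound stays at $\sum_{l=1}^{q+s}$ and does not degrade. A secondary point requiring care is that the exceptional set in the logarithmic derivative lemma depends on the function $g$ at each step, so one must invoke the lemma finitely many times and take a union; because there are only finitely many pairs $(q,s)$, the total exceptional set remains of finite linear measure. The $\log r$ terms add up only finitely many times and so remain $O(\log r)$. For the analogous difference statement, Lemma~\ref{d5.3}, the same scheme works verbatim with the lemma on the logarithmic difference replacing the lemma on the logarithmic derivative, which is exactly the remark preceding the statement.
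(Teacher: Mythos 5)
Your argument is correct and is essentially the proof the paper intends: the paper defers to the proof of its difference analogue (Lemma~\ref{d5.3}), which is exactly your induction on $q$ — bound the quotient by $T(r,f_{q-1,s+1})+T(r,f_{q-1,1})+O(1)$, handle the derivative via the lemma on the logarithmic derivative, and take a finite union of the exceptional sets. The only cosmetic slip is the constant in $T(r,g')\le 2T(r,g)+m(r,g'/g)+O(\log r)$ (since $N(r,g')=N(r,g)+\overline{N}(r,g)\le 2T(r,g)$ the constant should be $3$ in your chain), which is immaterial under $\lesssim$.
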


Using the the standard estimate for the logarithmic derivatives in the unit disc \cite[pp.~241--246]{Ne}, it is easy to obtain the following unit disc counterpart of Lemma~\ref{5.3}.  

\begin{lemma}\label{5.3-unit}
	Suppose that $f_{0,1},\ldots,f_{0,n}$ are linearly independent 
	meromorphic functions in $\D$. Define the functions $f_{q,s}$ as in \eqref{lin-indep-sols}.	Then
	\begin{equation}\label{01-unit}
	T(r,f_{q,s})\lesssim \sum_{l=1}^{q+s} T(r,f_{0,l})+\log \frac{1}{1-r},\quad r\not \in E_2,\nonumber
	\end{equation}
	where $E_2\subset[0,1)$ is a set with $\int_{E_2}\frac{dr}{1-r}<\infty$. 
\end{lemma}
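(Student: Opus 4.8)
The plan is to reproduce the proof of Lemma~\ref{5.3} essentially verbatim, replacing the classical lemma on the logarithmic derivative by its unit disc version. Specifically, I would use the estimate (see \cite[pp.~241--246]{Ne}) that for every non-constant meromorphic function $g$ in $\D$,
\[
m\!\left(r,\frac{g'}{g}\right)\lesssim \log^+T(r,g)+\log\frac{1}{1-r},\qquad r\notin E_g,
\]
where $E_g\subset[0,1)$ satisfies $\int_{E_g}\frac{dr}{1-r}<\infty$, and then argue by induction on $q$. Before starting the induction I would record that, since $f_{0,1},\ldots,f_{0,n}$ are linearly independent, the standard order reduction mechanism shows that each denominator $f_{q-1,1}$ in \eqref{lin-indep-sols} satisfies $f_{q-1,1}\not\equiv 0$ and, more precisely, that the functions $f_{q,s}$ are again linearly independent; in particular each ratio $f_{q-1,s+1}/f_{q-1,1}$ is non-constant, so the estimate above applies (and if such a ratio were constant the corresponding $f_{q,s}$ would vanish and the claimed bound would be trivial).

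For the base case $q=1$, write $f_{1,s}=g'$ with $g=f_{0,s+1}/f_{0,1}$. Combining the elementary bound $T(r,g)\le T(r,f_{0,s+1})+T(r,f_{0,1})+O(1)$ with
\[
T(r,g')\le m(r,g)+m\!\left(r,\frac{g'}{g}\right)+N(r,g')\le 2\,T(r,g)+m\!\left(r,\frac{g'}{g}\right),
\]
(a pole of $g$ of order $k$ being a pole of $g'$ of order $k+1$, so that $N(r,g')\le 2N(r,g)$), then applying the unit disc logarithmic derivative estimate and using $\log^+x\le x$ to absorb the $\log^+T(r,g)$ term, yields the asserted inequality for $q=1$. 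The inductive step has exactly the same shape: with $f_{q,s}=g'$, $g=f_{q-1,s+1}/f_{q-1,1}$, the two displayed inequalities reduce $T(r,f_{q,s})$ to a constant multiple of $T(r,f_{q-1,1})+T(r,f_{q-1,s+1})+\log\frac{1}{1-r}$ outside an exceptional set, and substituting the induction hypothesis for indices $q-1$ (using $q\le q+s$ to absorb the shorter sum) gives $T(r,f_{q,s})\lesssim\sum_{l=1}^{q+s}T(r,f_{0,l})+\log\frac{1}{1-r}$ off a set of the required type.

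The only point requiring genuine attention is the bookkeeping of exceptional sets: each invocation of the unit disc logarithmic derivative lemma contributes its own set $E_g$, so I would check that every such set lies in the class $\{E\subset[0,1):\int_E\frac{dr}{1-r}<\infty\}$, that this class is closed under finite unions, and that --- since \eqref{lin-indep-sols} involves only finitely many pairs $(q,s)$ --- a single $E_2$ in this class can be chosen to serve for all of them simultaneously. A secondary, routine check is that the logarithmic derivative estimate retains its $\log\frac{1}{1-r}$ form in the degenerate case where $T(r,g)$ stays bounded, so that $\log^+T(r,g)=O(1)$ and the bound still holds. No step is deep; the lemma is recorded in this form for use in the proofs of the unit disc results in Section~\ref{proof-disc}.
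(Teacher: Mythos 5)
Your proposal is correct and follows essentially the same route the paper intends: the paper gives no separate proof of Lemma~\ref{5.3-unit} but states that it follows by running the induction of Lemma~\ref{d5.3} with the unit disc logarithmic derivative estimate in place of the logarithmic difference lemma, which is exactly what you do via $T(r,g')\le 2T(r,g)+m(r,g'/g)$ and the closure of the class $\{E:\int_E\frac{dr}{1-r}<\infty\}$ under finite unions.
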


A version of the following lemma is included in the proof of Theorem~5.6 in \cite[p.~244]{HX}. The precise form of the differential polynomials \eqref{CC} does not appear in \cite{HX}, but it is needed for proving Theorems~\ref{s-int} and ~\ref{sdsd2-disc}.

\begin{lemma}\label{coeff_Ap}
Let the coefficients $A_{0},\ldots,A_{n-1}$ in \eqref{lden} be meromorphic functions in a simply connected domain $D$, and let $f_{0,1},\ldots,f_{0,n}$ be linearly independent 
solutions of the equation \eqref{lden}. Define the functions $f_{q,s}$ as in \eqref{lin-indep-sols}. Then, for $p \in \{0, 1, \ldots, n-1\}$, we have
	\begin{equation}\label{4}
	-A_{p}= C_{n}+A_{n-1}C_{n-1}+\cdots+A_{p+1}C_{p+1},
	\end{equation}
where $C_{p+1}, \ldots, C_n$ have the following form
	\begin{equation} \label{CC}
	C_k = \sum_{{ l_0 + l_1 + \cdots + l_p = k-p}} K_{l_0, l_1, \ldots, l_p}
	\frac{f_{0,1}^{(l_0)}}{f_{0,1}}\, \frac{f_{1,1}^{(l_{1})}}{f_{1,1}}\,\cdots\,\frac{f_{p,1}^{(l_{p})}}{f_{p,1}}, \quad p+1 \le k \le n.
	\end{equation}
Here $0\leq l_0,l_1,\ldots,l_p\leq k-p$ and $K_{l_0, l_1, \ldots, l_p}$ are absolute positive constants.
\end{lemma}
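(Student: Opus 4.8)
The plan is to iterate the order reduction \eqref{lin-indep-sols} exactly as in the proof of Frei's theorem, keeping careful track of which coefficient survives at each stage. Starting from \eqref{lden} with solution base $f_{0,1},\ldots,f_{0,n}$, a single application of the substitution $f = f_{0,1}\int g\,dz$ (so that $g = (f/f_{0,1})'$) produces a linear differential equation of order $n-1$ satisfied by $f_{1,1},\ldots,f_{1,n-1}$, whose coefficients are explicit differential polynomials in the old coefficients $A_{n-1},\ldots,A_1$ and in logarithmic derivatives $f_{0,1}^{(j)}/f_{0,1}$. Iterating this $p$ times, we reach an equation of order $n-p$ for $f_{p,1},\ldots,f_{p,n-p}$; its ``constant term'' (the coefficient of the unknown function itself) is, up to sign, precisely the expression on the right-hand side of \eqref{4}. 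First I would carry out this induction and record that after $p$ steps the zeroth-order coefficient equals $-\big(C_n + A_{n-1}C_{n-1} + \cdots + A_{p+1}C_{p+1}\big)$, where the $C_k$ collect all the logarithmic-derivative factors accumulated along the way and the $A_j$ with $j\ge p+1$ are inherited directly (the coefficients $A_0,\ldots,A_{p-1}$ having been consumed by the reduction in forming the higher-order terms).

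The main content is then to verify the stated shape \eqref{CC} of each $C_k$, namely that it is a finite sum of monomials
$\frac{f_{0,1}^{(l_0)}}{f_{0,1}}\frac{f_{1,1}^{(l_1)}}{f_{1,1}}\cdots\frac{f_{p,1}^{(l_p)}}{f_{p,1}}$
with $l_0+l_1+\cdots+l_p = k-p$, nonnegative integer exponents bounded by $k-p$, and absolute positive integer coefficients $K_{l_0,\ldots,l_p}$. I would prove this by induction on $p$. The base case $p=0$ is immediate: there is no reduction, $A_p = A_0$, and \eqref{4} reads $-A_0 = C_n + A_{n-1}C_{n-1}+\cdots+A_1 C_1$, which is just the identity obtained by substituting a solution base into \eqref{lden} and solving for $A_0$ — here $C_k$ is a combination of $\frac{f_{0,1}^{(l_0)}}{f_{0,1}}$ with $l_0 = k$, matching \eqref{CC}. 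For the inductive step, when passing from the $(p-1)$-reduced equation to the $p$-reduced one via $g \mapsto (g/f_{p-1,1})'$, the old ``constant term'' $-A_{p-1}$ gets absorbed into higher-order coefficients, and the new constant term is built from the old first-order coefficient by the Leibniz rule; one checks that differentiating a monomial of weight $k-p$ in the variables $f_{0,1},\ldots,f_{p-1,1}$ and dividing by $f_{p-1,1}$ introduces exactly one new factor $f_{p,1}^{(l_p)}/f_{p,1}$ while raising the total weight by $1$, and that all numerical coefficients produced are positive integers. The degree/weight bookkeeping — that the homogeneity $l_0+\cdots+l_p = k-p$ and the bounds $0\le l_i \le k-p$ are preserved — is the step that needs the most care, but it is ultimately a routine Leibniz-rule computation and is exactly where the precise statement of the lemma (as opposed to the version implicit in \cite{HX}) is pinned down.

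I expect the main obstacle to be purely organizational rather than conceptual: setting up notation for the intermediate reduced equations so that the recursion for the coefficients is transparent, and confirming that the logarithmic-derivative factors that appear are precisely those of the functions $f_{0,1}, f_{1,1},\ldots,f_{p,1}$ defined in \eqref{lin-indep-sols} (and not of some other auxiliary functions). One clean way to handle this is to write the reduction in the form: if $h^{(m)} + B_{m-1}h^{(m-1)} + \cdots + B_0 h = 0$ and $h = h_1\int u\,dz$ with $h_1$ a solution, then $u$ satisfies an equation of order $m-1$ whose zeroth-order coefficient is $B_0$ replaced by a universal polynomial in $B_1,\ldots,B_{m-1}$ and $h_1^{(j)}/h_1$; applying this with $(B_{m-1},\ldots,B_0)$ the coefficients at stage $p-1$ and $h_1 = f_{p-1,1}$, and invoking the inductive description of the stage-$(p-1)$ coefficients, yields \eqref{CC} at stage $p$. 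Once \eqref{CC} is established for the $C_k$, identity \eqref{4} follows by reading off the zeroth-order coefficient of the stage-$p$ equation, whose solution space is spanned by $f_{p,1},\ldots,f_{p,n-p}$, completing the proof.
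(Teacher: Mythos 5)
Your overall architecture --- iterate the order reduction \eqref{lin-indep-sols} $p$ times and unwind the resulting coefficient recursion --- is exactly the paper's, and the bookkeeping you propose (homogeneity $l_0+\cdots+l_p=k-p$, positive integer constants) is the right thing to track. But there is a concrete misstep in the middle of your plan. Writing $A_{p,j}$ for the coefficients of the stage-$p$ (order $n-p$) reduced equation, these are differential polynomials in $A_{p},\ldots,A_{n-1}$ and in the logarithmic derivatives of $f_{0,1},\ldots,f_{p-1,1}$ \emph{only}. In particular the zeroth-order coefficient $A_{p,0}$ equals $A_p$ \emph{plus} a combination of $A_{p+1},\ldots,A_{n-1}$ with such log-derivative weights; it is not ``up to sign the right-hand side of \eqref{4}'', and it contains no factor $f_{p,1}^{(l_p)}/f_{p,1}$ whatsoever. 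Likewise, the Leibniz-rule differentiation in the passage from stage $p-1$ to stage $p$ introduces new factors $f_{p-1,1}^{(m)}/f_{p-1,1}$, not $f_{p,1}^{(l_p)}/f_{p,1}$ as you claim: $f_{p,1}$ is a \emph{solution} of the stage-$p$ equation, not an ingredient of its coefficients.

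The missing step --- which you do carry out correctly in your base case $p=0$, but drop in the inductive step --- is to substitute the particular solution $f_{p,1}$ into the stage-$p$ equation, which gives
$$
-A_{p,0}=\frac{f_{p,1}^{(n-p)}}{f_{p,1}}+A_{p,n-p-1}\frac{f_{p,1}^{(n-p-1)}}{f_{p,1}}+\cdots+A_{p,1}\frac{f_{p,1}'}{f_{p,1}}.
$$
This is the sole source of the factors $f_{p,1}^{(l_p)}/f_{p,1}$ in \eqref{CC}, including the pure term $f_{p,1}^{(n-p)}/f_{p,1}$ inside $C_n$. Identity \eqref{4} then follows by expanding each $A_{p,i}$, $i=0,\ldots,n-p-1$, back into the original coefficients $A_{p+i},\ldots,A_{n-1}$ via the reduction recursion and isolating the single occurrence of $A_p$, which sits inside $A_{p,0}$. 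With this plug-in step restored your plan coincides with the paper's proof; without it the monomials in \eqref{CC} cannot be produced, and the claimed identification of the constant term with $-A_p$ is already false for $n=2$, $p=1$, where $A_{1,0}=A_1+2f_{0,1}'/f_{0,1}\neq \pm A_1$.
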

\begin{proof}
We rename the coefficients $A_{0},\ldots,A_{n-1}$ by $A_{0,0},\ldots,A_{0,n-1}$.
Using the standard order reduction method as in \cite[p.~1233]{GSW} or in \cite[p.~60]{Laine}, we obtain, for a fixed $q\in \{1,\ldots,n-1\}$, that the functions $f_{q,s}$ in \eqref{lin-indep-sols} are linearly independent solutions of the equation
	\begin{equation} \label{qlde}
	f^{(n-q)} + A_{q, n-q-1} f ^{(n-q-1)} + \cdots + A_{q,0}f =0, 
	\end{equation}
where
	\begin{equation}\label{coeff}
	A_{q,j} = A_{q-1,j+1} + \sum_{k=j+2}^{n-q+1} {k\choose j+1} A_{q-1,k} \frac{ f_{q-1,1} ^{(k-j-1)}}{f_{q-1,1}},
	\quad j=0,\ldots,n-q-1.
	\end{equation}
In the case $q=p$, the function $f_{p,1}$ is a solution of \eqref{qlde}, and therefore 
	\begin{equation}\label{2.10}
	-A_{p,0}=\frac{f_{p,1}^{(n-p)}}{f_{p,1}}+A_{p, n-p-1}\frac{f_{p,1} ^{(n-p-1)}}{f_{p,1}} 
	+\cdots+A_{p,1}\frac{f_{p,1}'}{f_{p,1}}.
	\end{equation}	
We need to write the coefficients $A_{p,i}$ in \eqref{2.10} in terms of the coefficients $A_{0,0}, \ldots, A_{0,n-1}$. For that we prove by induction on $m = 1, \ldots, p$, that
	\begin{eqnarray}\label{claim-A}
	A_{p,i}&=&A_{p-m,i+m}+A_{p-m,i+m+1}C_{i,p-m,i+m+1}+\cdots +A_{p-m,n-p+m}C_{i,p-m,n-p+m},
	\end{eqnarray}
where $i=0,\ldots,n-p-1$,
	\begin{equation}\label{claim-C}
        C_{i,p-m,s+m}=\sum_{l_{p-m} + \cdots + l_{p-1} =s-i}K_{l_{p-m}, \ldots, l_{p-1}}\frac{f_{p-m,1}^{(l_{p-m})}}{f_{p-m,1}}\frac{f_{p-m+1,1}^{(l_{p-m+1})}}{f_{p-m+1,1}}\ldots\frac{f_{p-1,1}^{(l_{p-1})}}{f_{p-1,1}}, 
        \end{equation}
and $s= i+1, \ldots, n-p$.

When $m=1$, we get \eqref{claim-A} from \eqref{coeff} with
	\begin{equation*}\label{C--p-1}
	\begin{split}
C_{i,p-1,i+2}&={i+2\choose i+1}\frac{f_{p-1,1}^{(1)}}{f_{p-1,1}},\\
C_{i,p-1,i+3}&={i+3\choose i+1}\frac{f_{p-1,1}^{(2)}}{f_{p-1,1}},\\
 & \;\ \vdots\\
C_{i,p-1,n-p+1}&={n-p+1\choose i+1}\frac{f_{p-1,1}^{(n-p-i)}}{f_{p-1,1}}.
	\end{split}
	\end{equation*}
	
Now, we suppose that  \eqref{claim-A} and \eqref{claim-C} hold for $m$, and we aim to prove that they hold for $m+1$. Hence, by applying \eqref{coeff} into the coefficients  $A_{p-m,i+m}, A_{p-m,i+m+1} ,\ldots ,A_{p-m,n-p+m}$ in \eqref{claim-A}, and after rearranging the terms, we obtain
	\begin{eqnarray*}
	A_{p,i}&=&A_{p-m-1,i+m+1}+A_{p-m-1,i+m+2}C_{i,p-m-1,i+m+2}+\cdots +A_{p-m-1,n-p+m+1}C_{i,p-m-1,n-p+m+1},
	\end{eqnarray*}
where, for $j= i+1, \ldots, n-p$,
	\begin{equation}\label{eqeq}
	C_{i,p-m-1, j+m+1} = \sum_{s=i}^{j}  {j+m+1 \choose s+m+1} C_{i,p-m,s+m} \frac{f_{p-m-1,1}^{(j-s)}}{f_{p-m-1,1}},
	\end{equation}
and $C_{i,p-m,i+m} \equiv 1$.
By substituting \eqref{claim-C} into \eqref{eqeq}, we easily deduce 
	\begin{equation*}
        C_{i,p-m-1,j+m+1}=\sum_{l_{p-m-1} + \cdots + l_{p-1} =j-i}K_{l_{p-m-1}, \ldots, l_{p-1}}\frac{f_{p-m-1,1}^{(l_{p-m-1})}}{f_{p-m-1,1}} \cdots\frac{f_{p-1,1}^{(l_{p-1})}}{f_{p-1,1}}.
        \end{equation*}
Hence, we complete the proof of \eqref{claim-A} and \eqref{claim-C} for every $m = 1, \ldots, p$. In particular, when $m=p$, we obtain from \eqref{claim-A} that
	\begin{equation}\label{rearrangement}	
A_{p,i} =A_{0,p+i}+A_{0,p+i+1}C_{i,0,p+i+1}+A_{0,p+i+2}C_{i,0,p+i+2}+\cdots+A_{0,n}C_{i,0,n},
\end{equation}
where $0\leq i\leq  n-p-1$. 
Again, by substituting \eqref{rearrangement} into \eqref{2.10} for every $0\leq i\leq  n-p-1$, and by rearranging the terms, we get 
	\begin{equation*} 
	-A_{0,p	}=C_{n}+A_{0,n-1}C_{n-1}+\cdots+A_{0,p+1}C_{p+1},
	\end{equation*}
	where 
	\begin{equation}\label{Cc2}
	C_j = C_{0,0,j} + \sum_{k= n-j}^{n-p-1} C_{n-p-k,0, j} \frac{f_{p,1}^{(n-p-k)}}{f_{p,1}}.
	\end{equation}
Finally, from \eqref{Cc2} and  \eqref{claim-C}, we can easily get \eqref{CC}.
\end{proof}


\section{Proofs of the results in the complex plane} \label{proof-plane}

To prove Theorems \ref{asymptotic-cor} and \ref{sdsd}, we need the following version of the lemma on the logarithmic derivative, which differs from the standard versions in \cite{G} in the sense that the upper estimate involves an arbitrary $R\in (r,\infty)$ as opposed to a specifically chosen $R=\alpha r$, where $\alpha>1$. 

\begin{lemma}\label{G-estimate}
Let $0<R<\infty$, $\alpha>1$, and let $f$ be a meromorphic function in $\C$. Suppose that $k,j$ are integers with $k>j\geq 0$, and $f^{(j)}\not\equiv 0$. Then there exists a~set $E_3\subset[0,\infty)$ that has finite linear measure such that for all $z$ satisfying $|z|=  r \in (0,R) \setminus E_3$, we have 
	\begin{equation}\label{G-estimate-1}
	\left|\frac{f^{(k)}(z)}{f^{(j)}(z)}\right|
	\lesssim \left\{  \frac{R}{R-r}\left(1+\log ^{+} R+\log ^{+} \frac{1}{R-r}+T(R, f)\right)\right\}^{(1+\alpha)(k-j)}.
	\end{equation}
Moreover, if $k=1$ and $j=0$, then the logarithmic terms in \eqref{G-estimate-1} can be omitted.
\end{lemma}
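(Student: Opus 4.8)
The plan is to reduce the general case $k>j\geq 0$ to a product of first-order logarithmic derivatives and then invoke a single estimate of the form
\[
\left|\frac{g'(z)}{g(z)}\right| \lesssim \frac{R}{R-r}\left(1+\log^+ R + \log^+\frac{1}{R-r} + T(R,g)\right), \quad |z|=r\in(0,R)\setminus E,
\]
valid for a meromorphic $g$ on $\C$ outside a set $E$ of finite linear measure, with the refinement that when $g$ is entire the term $\log^+ R$ and the characteristic can be replaced by $T(R,g)$ alone (this is the ``omit the logarithmic terms'' clause for $k=1$, $j=0$, since then $g=f$ is not assumed entire — I should double-check the precise hypothesis, but the mechanism is the same). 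Such a pointwise bound is exactly the Gundersen-type estimate; the only novelty here is keeping $R$ as a free parameter in $(r,\infty)$ rather than specializing to $R=\alpha r$, so the first step is to carefully track how $R$ enters the classical proof.

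First I would write
\[
\frac{f^{(k)}}{f^{(j)}} = \prod_{i=j}^{k-1} \frac{f^{(i+1)}}{f^{(i)}},
\]
which holds as a meromorphic identity once $f^{(j)}\not\equiv 0$ (each intermediate derivative $f^{(i)}$ with $j\le i\le k-1$ is then also not identically zero, since a meromorphic function with identically vanishing derivative is constant and we may dispose of that degenerate subcase directly). Thus it suffices to bound each factor $|f^{(i+1)}(z)/f^{(i)}(z)|$ and multiply; the exponent $(1+\alpha)(k-j)$ on the right-hand side of \eqref{G-estimate-1} is precisely $k-j$ copies of a per-factor exponent $1+\alpha$, so I would aim to prove
\[
\left|\frac{f^{(i+1)}(z)}{f^{(i)}(z)}\right| \lesssim \left\{\frac{R}{R-r}\left(1+\log^+ R + \log^+\tfrac{1}{R-r} + T(R,f)\right)\right\}^{1+\alpha}
\]
for $|z|=r$ outside a set of finite linear measure. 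For this I would apply the Poisson--Jensen formula for $f^{(i)}$ on the disc $|w|\le R$, differentiate it once in $z$, and estimate: the contribution of the Poisson kernel and its derivative produces the factor $R/(R-r)$ and the characteristic $T(R, f^{(i)})$, while the sum over zeros and poles of $f^{(i)}$ in $|w|<R$ is controlled, after removing an $r$-set of finite measure, by a Borel-type / Gundersen lemma that upgrades $n(R, f^{(i)})$-type counts into $\log^+(1/(R-r))$ and a power of the bracketed quantity — this is where the exponent $1+\alpha$ and the exceptional set $E_3$ originate. Finally I would absorb $T(R, f^{(i)})$ back into $T(R,f)$ using $T(R, f^{(i)}) \lesssim T(R, f) + \log^+ R + 1$ (valid outside a negligible set, or directly via the logarithmic-derivative bound for lower-order derivatives applied on a slightly larger disc), collecting all the per-factor exceptional sets into one set of finite linear measure.

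The main obstacle I expect is bookkeeping rather than conceptual: isolating exactly which steps of the classical Gundersen argument force the specialization $R=\alpha r$ and checking that each of them survives with $R$ free, while ensuring the constants hidden in ``$\lesssim$'' remain absolute (independent of $R$, $r$, $f$). In particular, the estimate of the zero/pole sum needs the separation $R-r$ to enter only through $\log^+(1/(R-r))$ and the $(1+\alpha)$-power, and one must verify that the exceptional $r$-set can be chosen independently of $R$ — this is the delicate point, handled by the standard trick of taking the exceptional set from a Borel-type growth lemma applied to $T(\cdot, f)$ (or to $N(\cdot, 1/f^{(i)})$ and $N(\cdot, f^{(i)})$), which does not see $R$ at all. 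The ``moreover'' clause for $k=1,\ j=0$ is then immediate: a single first-order factor with no need to bound lower-order derivatives removes the auxiliary $\log^+ R$ terms, leaving only $R/(R-r)$ times $T(R,f)$ (and the $\log^+\frac{1}{R-r}$ term, which the statement does permit to remain — here too I would recheck the intended reading of ``logarithmic terms'').
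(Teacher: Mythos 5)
Your plan follows essentially the same route as the paper's proof: reduce to a product of first-order logarithmic derivatives, bound each factor via the Poisson--Jensen formula with the zero/pole sum controlled off the union of discs $D(a_m,1/m^{\alpha})$ (whence the exponent $1+\alpha$ and the exceptional set, which indeed depends only on the zeros and poles and not on $R$), estimate $n(\varrho)$ by $N(R)$ through an intermediate radius, and absorb $T(R,f^{(i)})$ into $T(R,f)$ at the cost of the $\log^{+}R$ and $\log^{+}\frac{1}{R-r}$ terms. The only caveat is that Poisson--Jensen must be applied on an intermediate disc of radius $\varrho=(R+r)/2$ rather than on $|w|\le R$ itself, so that $n(\varrho)$ can be bounded by $RN(R)/(R-\varrho)\lesssim RT(R,f)/(R-r)$; this is the standard fix your plan already anticipates.
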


\begin{proof}
Let $\{a_m\}$ denote the sequence of zeros and poles of $f^{(j)}$ listed according to multiplicity
and ordered by increasing modulus. Let $n(r)$ denote the number of points $a_m$ in $D(0,r)$,
and let $N(r)$ denote the corresponding integrated counting function.

Consider the case $k=1$ and $j=0$ first. By a standard reasoning
based on the Poisson-Jensen formula, we obtain
	\begin{eqnarray*}
	\left|\frac{f'(z)}{f(z)}\right|&\leq& \frac{\varrho}{(\varrho-r)^2}\int_0^{2\pi}
	|\log |f(\varrho e^{i\theta})||\, d\theta+\sum_{|a_m|<\varrho}\left(\frac{1}{|z-a_m|}
	+\frac{|a_m|}{|\varrho^2-\bar a_mz|}\right),
	\end{eqnarray*}
where $|z|=r<\varrho< R$. From the first fundamental theorem, it follows that
	$$
	\int_0^{2\pi}
	|\log |f(\varrho e^{i\theta})||\, d\theta\leq 4\pi (T(\varrho,f)+O(1)).
	$$
Clearly
	$$
	\sum_{|a_m|<\varrho}\frac{|a_m|}{|\varrho^2-\bar a_mz|}\leq \frac{n(\varrho)}{\varrho-r}.
	$$	
Let $U$ be the collection of discs $D(a_m,1/m^\alpha)$ if $a_m\neq 0$ and $D(a_m,1)$ if $a_m=0$. Then
the projection $E_3$ of $U$ onto $[0,\infty)$ has a linear measure at most
	$$
	1+\sum_{m=1}^\infty \frac{2}{m^\alpha}<\infty.
	$$
Let $L$ be the number of points $a_m$ at the origin. If $z\not\in U$, we have
	$$
	\sum_{|a_m|<\varrho}\frac{1}{|z-a_m|}\leq L+\sum_{|a_m|<\varrho}m^\alpha
	\leq L+\sum_{|a_m|<\varrho}n(\varrho)^\alpha\leq L+n(\varrho)^{1+\alpha}.
	$$
Since
	$$
	N(R)-N(\varrho)=\int_\varrho^R\frac{n(t)}{t}\, dt \geq n(\varrho)\frac{R-\varrho}{R},
	$$
it follows that
	\begin{equation} \label{n}
	n(\varrho)\leq \frac{RN(R)}{R-\varrho}\leq \frac{2RT(R,f)+O(R)}{R-\varrho}.
	\end{equation}
Choosing $\varrho=(R+r)/2$ and putting everything together, we deduce
	$$
	\left|\frac{f'(z)}{f(z)}\right|\lesssim 
	\left(\frac{R}{R-r}\right)^{1+\alpha}(T(R,f)+1)^{1+\alpha},\quad z\not\in U,
	$$
which implies the assertion in the case when $k=1$ and $j=0$.

Consider next the general case. Standard estimates yield
	\begin{equation}\label{derivative}
	T(s,f^{(m)})\lesssim 1+\log^+R+\log^+\frac{1}{R-r}+T(R,f),\quad s=\frac{r+R}{2}.
	\end{equation}
Using
	$$
	\left|\frac{f^{(k)}(z)}{f^{(j)}(z)}\right|
	=\left|\frac{f^{(k)}(z)}{f^{(k-1)}(z)}\right|\cdots\left|\frac{f^{(j+1)}(z)}{f^{(j)}(z)}\right|
	$$
together with \eqref{derivative} and the first part of the proof, the assertion follows. 
\end{proof}

\begin{remark}
Taking $R= r+ 1/ T(r,f)$ in Lemma~\ref{G-estimate} and using Borel's Lemma \cite[Lemma~2.4,]{H}, we obtain that there exists a set $E_4\subset [0,\infty)$ of finite linear measure, such that
	\begin{equation}\label{remark1}
	\log^+\left|\frac{f^{(k)}(z)}{f^{(j)}(z)}\right| \lesssim \log T(r,f) + \log r, \quad r\notin E_4.
	\end{equation}
\end{remark}

\begin{proof}[Proof of Theorem~\ref{asymptotic-cor}]
We prove the theorem in three steps.

(i) Let $\{f_{0,1},\ldots,f_{0,n} \}$ be a given solution base of \eqref{lden}. We prove that there exist at least $n-p$ solutions $f$ in $\{f_{0,1},\ldots,f_{0,n}\}$ and a set $E\subset [0,\infty)$ of finite linear measure such that
	\begin{equation}\label{in2}
	\log M(r,A_p)\lesssim \log T(r,f),\quad r\not \in E.
	\end{equation} 
It suffices to prove that there are at most $p$ solutions $f$ 
in $\{f_{0,1},\ldots,f_{0,n}\}$ and a set $F\subset [0,\infty)$ of infinite linear measure such that 
	\begin{equation}\label{3.1}
	\log T(r,f)=o(\log M(r,A_p)),\quad r\to\infty,\ r\in F.
	\end{equation}
 We assume on the contrary to this claim that there are $p+1$ solutions $f$ in $\{f_{0,1},\ldots,f_{0,n}\}$, say $f_{0,1},\ldots,f_{0,{p+1}}$, each satisfying \eqref{3.1}, and aim for a contradiction.

Note that $A_p$ is transcendental, because if this is not the situation, that is, if $A_p$ is a polynomial, then by \eqref{2LM2} we deduce that $A_{p+1},\ldots,A_{n-1}$ are also polynomials. If $A_{p-1}$ is transcendental, then
	$$
	\limsup_{r\to \infty} \sum_{j=p}^{n-1}\frac{\log^+ M(r,A_{j})}{\log^+ M(r,A_{p-1})}=0,
	$$	
which contradicts the assumption that $p$ is the smallest index for which \eqref{2LM2} holds. Thus $A_{p-1}$ is also polynomial. Similarly it follows that  $A_0,\ldots,A_{p-2}$ are polynomials. But this contradicts the assumption that at least one of the coefficients is transcendental. 

From Lemma~\ref{coeff_Ap}, we have
	\begin{align}
	\log^+ |A_p(z)| \le  \sum_{j=p+1}^{n-1} \log M(r,A_j) + \sum_{k=p+1}^n \log^+|C_k(z)|  + O(1). \label{coeff-estimate}
	\end{align}
It follows from \eqref{CC} and \eqref{remark1} together with Lemma~\ref{5.3} that
	\begin{equation}\label{log-estimate}
	\begin{split}
	\log^+ |C_k(z)| &= O\left( \sum_{l_0+\cdots+l_p=k-p}\; \sum_{\nu=0}^p \log^+ \left|\frac{f_{\nu,1}^{(l_\nu)}}{f_{\nu,1}}\right| + 1\right) \\
	 & = O\left( \sum_{\nu=0}^p \log T(r,f_{\nu,1})  + \log r\right) \\
	 & = O\left(\sum_{\nu=0}^p \; \sum_{l=1}^{\nu+1} \log T(r,f_{0,l}) + \log r\right) \\
	 &= O\left( \sum_{l=1}^{p+1} \log T(r,f_{0,l}) + \log r \right), \quad r=|z| \notin (E_1\cup E_4). 
	\end{split}
	\end{equation}
Therefore, we get from \eqref{coeff-estimate} and \eqref{log-estimate} that  
	$$
	\log M(r,A_p) \le  \sum_{j=p+1}^{n-1} \log M(r,A_j) + O \left(\sum_{l=1}^{p+1} \log T(r,f_{0,l}) + \log r \right), \quad r\notin ( E_1\cup E_4).
	$$
Since $F$ has infinite linear measure, it follows that $F\setminus (E_1\cup E_4)$ has also infinite linear measure. Then, using \eqref{2LM2}, \eqref{3.1} and the fact that $A_p$ is transcendental, we obtain
	\begin{align*}
	1 & \le \limsup_{\substack{r\to\infty \\ r\in F\setminus (E_1\cup E_4)}}\frac{\sum_{j=p+1}^{n-1} \log M(r,A_j)}{\log M(r,A_p)} 
	+ \limsup_{\substack{r\to\infty \\ r\in F\setminus (E_1\cup E_4)}} O \left( \frac{\sum_{l=1}^{p+1} \log T(r,f_{0,l})}{\log M(r,A_p)} +\frac{\log r}{\log M(r,A_p)} \right) \\
	& \le \limsup_{\substack{r\to\infty}}\frac{\sum_{j=p+1}^{n-1} \log M(r,A_j)}{\log M(r,A_p)} 
	+ \limsup_{\substack{r\to\infty \\ r\in F}} O \left( \frac{\sum_{l=1}^{p+1} \log T(r,f_{0,l})}{\log M(r,A_p)} +\frac{\log r}{\log M(r,A_p)} \right) \\
	& <1,
	\end{align*}
which is absurd. Thus, the asymptotic inequality \eqref{in2} is now proved.

(ii) We prove that any non-trivial solution $f$ of \eqref{lden} satisfies
	\begin{equation}\label{in1}
	\log T(r,f)  \lesssim \log M(r,A_p).
	\end{equation}
From \cite[Corollary~5.3]{HKR}, we infer
	\begin{equation}\label{T-max}
	T(r,f)=m(r,f)\lesssim  r\, \sum_{j=0}^{n-1}M(r,A_j)^\frac{1}{n-j}+1, \quad r\ge 0.
	\end{equation}
From \eqref{2LM2} and  \eqref{T-max}, we obtain
	\begin{equation}\label{logT}
	\begin{split}
	\log^+ T(r,f) &\lesssim \log^+ r+\sum_{j=0}^{n-1}\log^+ M(r,A_j)\\
	&\lesssim \sum_{j=0}^{p-1}\log^+ M(r,A_j)+\log^+ M(r,A_p),
	\end{split}
	\end{equation}
where the sum on the right is empty if $p=0$. Hence we suppose that $p\geq 1$.

We proceed to prove that
	\begin{equation}\label{DM}
	\log^+ M(r,A_j)\lesssim \log^+ M(r,A_{p}),\quad 0\leq j\leq p-1.
	\end{equation}
Suppose on the contrary to this claim that there exists an $s\in\{0,\dots,p-1\}$ such that
	\begin{equation}\label{D-2LM}
	{\limsup_{r\to \infty}}\frac{\log^+ M(r,A_p)}{\log^+ M(r,A_s)}=0.
	\end{equation}
Choose $s$ to be the largest index in $\{0,\dots,p-1\}$ for which \eqref{D-2LM} occurs.
If $s=p-1$, we arrive at a contradiction with the definition of the index $p$.
Thus $s\in \{0,\ldots,p-2\}$, where $p\geq 2$. 
Then \eqref{DM} holds for $j=s+1,\ldots,p-1$. Hence, from \eqref{2LM2} and \eqref{D-2LM}, we obtain
	\begin{align}\label{3LM}
	&\limsup_{r\to \infty}\frac{\sum_{j=s+1}^{n}\log^+ M(r,A_{j})}{\log^+ M(r,A_{s})}\nonumber\\
	&=\limsup_{r\to \infty}\frac{\sum_{j=s+1}^{p-1}\log^+ M(r,A_{j})+\sum_{j=p+1}^{n}\log^+ M(r,A_{j})
	+\log^+ M(r,A_{p})}{\log^+ M(r,A_{s})} = 0,\nonumber
	\end{align}
which contradicts our assumption that $p$ is the smallest index for which \eqref{2LM2} occurs. 
This proves \eqref{DM}.  Thus \eqref{in1} follows from \eqref{logT} and \eqref{DM}.

(iii) It remains to prove that $0$ is the only finite deficient value for the rapid solutions. According to Wittich's theorem, it suffices to prove that rapid solutions are also admissible solutions of \eqref{lden}. From \eqref{2LM2} and \eqref{DM} we get $\log M(r,A_j) \lesssim \log M(r,A_{p})$, for every $j=0, \ldots, n-1$. Thus, every rapid solution $f$ of \eqref{lden} satisfies, for all $j=0, \ldots, n-1$,
	$$
	\frac{T(r,A_j)}{T(r,f)} \le \frac{\log M(r,A_j)}{T(r,f)} \lesssim \frac{\log M(r,A_{p})}{T(r,f)} \asymp \frac{\log T(r,f)}{T(r,f)} \to 0, \quad r\to \infty, \  r\notin E,
	$$
i.e., every rapid solution is admissible solution.
\end{proof}


\begin{proof}[Proof of Theorem~\ref{sdsd}]
Similarly to the proof of Theorem~\ref{asymptotic-cor}, we assume the contrary to the assertion that there exist $p+1$ linearly independent solutions $f_{0,1},\ldots,f_{0,p+1}$  and a set $F\subset[0,\infty)$ of infinite linear measure such that
	\begin{equation}\label{p1}
	\log T(r,f_{0,l})=o(\log M(r,A_{p})),\quad r\to\infty,\  r\in F,\  l=1,\ldots, p+1.
	\end{equation}
From Lemma~\ref{coeff_Ap} and from Young's inequality for the product \cite[p.~49]{M}, we obtain
	\begin{equation}\label{modu-A-p}
	|A_{p}(z)| \le  \sum_{j=p+1}^{n-1} \frac{1}{\eta_j}{|A_{j}(z)|^{\eta_j}} + |C_n(z)| + \sum_{j=p+1}^{n-1} |C_{j}(z)|^{\eta_j^*} , 
	\end{equation}
where the constants $\eta_j>1$ are given in the statement of the theorem, and the constants $\eta_j^*>1$ are their conjugate indices satisfying $1/\eta_j + 1/\eta_j^* =1$ for every $j=p+1, \ldots, n-1$. 
From \eqref{p-u-1} we can find a $\delta>0$ such that for some $r_0>0$ we have
	\begin{equation}\label{modu-right-1}
	\sum_{j=p+1}^{n-1} \frac{1}{\eta_j}{|A_{j}(z)|^{\eta_j}} <(1-\delta) M(r,A_p), \quad z\in \Gamma,\; |z|=r > r_0.
	\end{equation}
Hence, it follows from \eqref{log-estimate}, \eqref{modu-A-p} and \eqref{modu-right-1} that
	\begin{equation*}
 \log M(r,A_{p}) \lesssim \sum_{l=1}^{p+1} \log T(r,f_{0,l})+\log r, \quad r\in (r_0, \infty) \setminus (E_1\cup E_4).
 \end{equation*}
Dividing both sides of the last asymptotic inequality by $\log M(r,A_p)$ and by letting $r\to\infty$ in $F\setminus (E_1\cup E_4)$ and using \eqref{p1} and the fact that $A_p$ is transcendental, we get a contradiction. 
Thus the proof is complete. 
\end{proof}

\begin{proof}[Proof of Theorem~\ref{thhh1}]
The proof is quite similar to the proof of Theorem~\ref{asymptotic-cor}. Hence, we only state the differences and omit the rest of the details.

For the lower bound of $\log T(r,f)$ in \eqref{3'}, we apply the proximity function on \eqref{4} in Lemma~\ref{coeff_Ap}, and use the standard logarithmic derivative estimate.

We deduce the upper bound of $\log T(r,f)$ in \eqref{3'} in the following way:
Similarly to \eqref{DM}, we deduce 
	$$
	T(r,A_j) \lesssim T(r,A_{p}),\quad 0\leq j\leq p-1.
	$$
Therefore, combining this with \eqref{T-max} and \eqref{2} and the fact that $A_p$ is transcendental, we obtain for any $r<R<\infty$,
	\begin{align*}
	\log^+ T(r,f) &\lesssim \log^+ r+\sum_{j=0}^{n-1}\log^+ M(r,A_j)\\
	& \le \log^+R + \frac{R+r}{R-r} \sum_{j=0}^{n-1} T(R,A_j)\\
	&\lesssim  \frac{R+r}{R-r} T(R,A_p). 
	\end{align*}
Finally, every rapid solution $f$ of \eqref{lden} satisfies
	$$
	\frac{T(r,A_j)}{T(r,f)} \lesssim \frac{T(r,A_p)}{T(r,f)} \lesssim \frac{\log T(r,f)}{T(r,f)} \to 0, \quad r\to\infty,\ r\notin E,
	$$
for all $j=0,\ldots, n-1$.
\end{proof}

\section{Proofs of the results in the unit disc}\label{proof-disc}

The proofs of Theorems \ref{th.1-ud-1}--\ref{pointwise th} follow their plane analogues. In fact, we use Lemma~\ref{5.3-unit} instead of Lemma~\ref{5.3}. Furthermore, we use the unit disc counterpart of the lemma on the logarithmic derivatives to prove Theorem~\ref{th.1-ud-1}. The following lemma is the unit disc analogue of Lemma~\ref{G-estimate} and is needed to prove Theorems~\ref{asymptotic-cor-u1} and \ref{pointwise th}.

\begin{lemma}\label{G-estimate-disc}
Let $0<R<1$, $\alpha>1$, and let $f$ be a meromorphic function in $\D$. Suppose that $k,j$ are integers with $k>j\geq 0$, and $f^{(j)}\not\equiv 0$. Then there exists a~set $E\subset[0,1)$ with $\int_E \frac{dr}{1-r}<\infty$ such that for all $z$ satisfying $|z|=  r \in (0,R) \setminus E$, we have 
	\begin{equation}\label{G-estimate-2}
	\left|\frac{f^{(k)}(z)}{f^{(j)}(z)}\right|
	\lesssim \frac{1}{(R-r)^{k-j}}\left\{  \frac{R}{R-r}\left(1+\log ^{+} \frac{1}{R-r}+T(R, f)\right)\right\}^{(1+\alpha)(k-j)}. 
	\end{equation}
Moreover, if $k=1$ and $j=0$, then the logarithmic term in \eqref{G-estimate-2} can be omitted.
\end{lemma}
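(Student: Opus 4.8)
The plan is to follow the proof of Lemma~\ref{G-estimate} almost line by line, replacing the Poisson--Jensen formula in $\C$ by its analogue on a disc $\{|\zeta|<\varrho\}$ with $r<\varrho<R<1$, and replacing the exceptional set of finite linear measure by a set $E\subset[0,1)$ with $\int_E\frac{dr}{1-r}<\infty$. As in the plane case it suffices to treat $k=1$, $j=0$ first and then reduce the general case by telescoping; moreover one may assume $f^{(i)}\not\equiv 0$ for $j\le i\le k$, since otherwise the left-hand side of \eqref{G-estimate-2} vanishes.

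For the base case, let $\{a_m\}$ be the zeros and poles of $f$ (with multiplicity, ordered by increasing modulus) and $n(t)$, $N(t)$ the corresponding counting functions. Differentiating the Poisson--Jensen formula on $\{|\zeta|<\varrho\}$ gives, for $|z|=r<\varrho$,
\[
\left|\frac{f'(z)}{f(z)}\right|\lesssim\frac{\varrho}{(\varrho-r)^2}\int_0^{2\pi}\bigl|\log|f(\varrho e^{i\theta})|\bigr|\,d\theta+\sum_{|a_m|<\varrho}\frac{1}{|z-a_m|}+\frac{n(\varrho)}{\varrho-r},
\]
where $|\varrho^2-\bar a_m z|\ge\varrho(\varrho-r)$ was used in the last sum. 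The integral is $\lesssim T(\varrho,f)+1\le T(R,f)+1$ by the first fundamental theorem, and from $n(\varrho)\tfrac{R-\varrho}{R}\le N(R)-N(\varrho)\le N(R)\lesssim T(R,f)+1$ one obtains $n(\varrho)\lesssim\frac{R}{R-\varrho}(T(R,f)+1)$. I would then fix $\varrho=(R+r)/2$, so that $\varrho-r=R-\varrho=(R-r)/2$, and record the elementary inequality $1-\varrho=\tfrac12\bigl((1-R)+(1-r)\bigr)\ge\tfrac12(R-r)$, which holds precisely because $R<1$; this is exactly what produces the factor $1/(R-r)^{k-j}$ rather than something involving $1-r$.

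The sum $\sum_{|a_m|<\varrho}1/|z-a_m|$ is the only genuinely new point. I would exclude $U=\bigcup_m D\bigl(a_m,(1-|a_m|)/m^\alpha\bigr)$ and take $E$ to be the radial projection of $U$; since $D\bigl(a_m,(1-|a_m|)/m^\alpha\bigr)$ projects onto an interval around $|a_m|$ over which $\int\frac{dr}{1-r}\lesssim m^{-\alpha}$, we get $\int_E\frac{dr}{1-r}\lesssim\sum_m m^{-\alpha}<\infty$, and $E$ depends on $f$ and $\alpha$ only, not on $r$ or $R$. For $|z|=r\notin E$ and $|a_m|<\varrho$ we have $|z-a_m|\ge(1-|a_m|)/m^\alpha\ge(1-\varrho)/m^\alpha$, hence $\sum_{|a_m|<\varrho}1/|z-a_m|\lesssim n(\varrho)^{1+\alpha}/(1-\varrho)\lesssim\frac{1}{R-r}\bigl(\tfrac{R}{R-r}(T(R,f)+1)\bigr)^{1+\alpha}$, using $1-\varrho\ge(R-r)/2$ and the bound on $n(\varrho)$. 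Combining the three contributions yields \eqref{G-estimate-2} for $k=1$, $j=0$, with no logarithmic term.

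For $k>j\ge 0$ I would write $f^{(k)}/f^{(j)}=\prod_{i=j}^{k-1}f^{(i+1)}/f^{(i)}$ and apply the base case to each $g=f^{(i)}$ at the radius $s=(r+R)/2$, after noting that $T(s,f^{(i)})\lesssim 1+\log^+\frac{1}{R-r}+T(R,f)$; this estimate follows from the lemma on the logarithmic derivative in $\D$ (cf.\ \cite[pp.~241--246]{Ne}) together with $N(s,f^{(i)})\lesssim T(s,f)$ and $s-r=R-s=(R-r)/2$. Taking the (still admissible) union of the $k-j$ exceptional sets and multiplying the $k-j$ resulting bounds gives the stated estimate, the $k-j$ prefactors $1/(s-r)$ and the $k-j$ brackets raised to $1+\alpha$ producing the exponents $k-j$ and $(1+\alpha)(k-j)$. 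I expect the main obstacle to be the third paragraph: unlike in $\C$, the excluded discs must shrink proportionally to $1-|a_m|$ near $\partial\D$ in order to keep $\int_E\frac{dr}{1-r}$ finite, and one must then verify simultaneously that this still yields the pointwise lower bound $|z-a_m|\gtrsim(1-\varrho)/m^\alpha$ and that the ensuing $1/(1-\varrho)$ can be traded for $1/(R-r)$ via $1-\varrho\ge(R-r)/2$.
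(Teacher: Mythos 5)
Your proposal is correct and follows essentially the same route as the paper: the authors likewise shrink the excluded discs to radius $(1-|a_m|)/m^\alpha$ so that the radial projection $E$ satisfies $\int_E\frac{dr}{1-r}<\infty$ (citing \cite[pp.~749--750]{CGH} for that step), bound the sum over zeros and poles by $L+n(\varrho)^{1+\alpha}/(R-\varrho)$ using $1-|a_m|\ge R-\varrho$ for $|a_m|<\varrho$, and then feed this into the plane argument with $\varrho=(R+r)/2$, which is exactly where the extra factor $(R-r)^{-(k-j)}$ arises. Your telescoping reduction of the general case to $k=1$, $j=0$ is also the intended (though unwritten) completion of the paper's one-line conclusion.
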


\begin{proof}
Following the proof of Lemma~\ref{G-estimate}, let $U$ be the collection of discs $D(a_m, R_m)$, where $R_m = (1-|a_m|) / m^\alpha$ and $\{a_m\}$ is the sequence of zeros and poles of $f^{(j)}$ in $\D$ listed according to multiplicity and ordered by increasing modulus. Clearly,
	$$
	\sum_{m=1}^\infty \frac{R_m}{1-|a_m|} <\infty.
	$$
Then the projection $E$ of $U$ on $[0,1)$ satisfies $\int_E \frac{dr}{1-r}< \infty$, see \cite[pp.~749-750]{CGH}. Let $L$ denote the number of the points $a_m$ at the origin. If $z \notin U$, we have
	$$
	\sum_{|a_m|<\varrho}\frac{1}{|z-a_m|}\leq L+\sum_{0<|a_m|<\varrho}\frac{m^\alpha}{1-|a_m|}
	\leq L+\frac{1}{R-\varrho}\sum_{0<|a_m|<\varrho}n(\varrho)^\alpha\leq L+\frac{n(\varrho)^{1+\alpha}}{R-\varrho},
	$$
for all $r< \varrho <R$. Using this estimate with the other estimates in the proof of Lemma~\ref{G-estimate}, the assertion follows.
\end{proof}

We will also use the following minor modification of Borel's lemma \cite[Lemma~2.4]{H}.

\begin{lemma} \label{Borel}
Let $T: [r_0, 1) \mapsto [1,\infty)$ be continuous and non-decreasing function. For any $\sigma>0$, there exists a set $E(\sigma) \subset [0,1)$ with $\int_{E(\sigma)} \frac{dt}{1-t}< \infty$ such that
	\begin{equation*}
	T\left( r + \frac{1-r}{e T(r)^{\sigma}} \right) < 2 T(r),\quad r\notin E(\sigma).
	\end{equation*}
\end{lemma}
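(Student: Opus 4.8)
The plan is to reduce the statement to the classical Borel growth lemma on the half-line via the logarithmic substitution $u=-\log(1-r)$, under which the measure $\tfrac{dr}{1-r}$ becomes ordinary Lebesgue measure $du$. Concretely, I would set $u_{0}=-\log(1-r_{0})$, write $r=1-e^{-u}$ for $u\in[u_{0},\infty)$, and put $S(u)=T(1-e^{-u})$; then $S\colon[u_{0},\infty)\to[1,\infty)$ is continuous and non-decreasing, and for any measurable $A\subset[r_{0},1)$ one has $\int_{A}\tfrac{dr}{1-r}=|u(A)|$, where $u(A)\subset[u_{0},\infty)$ and $|\,\cdot\,|$ denotes Lebesgue measure.

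First I would translate the shifted argument into the $u$-variable. For $r\in[r_{0},1)$ set $r'=r+\tfrac{1-r}{eT(r)^{\sigma}}$; since $T(r)\ge1$ we have $0<\tfrac{1-r}{eT(r)^{\sigma}}\le\tfrac{1-r}{e}<1-r$, so $r'\in(r,1)$ and, with $x:=\tfrac{1}{eS(u)^{\sigma}}\in(0,\tfrac1e]$, one computes $1-r'=e^{-u}(1-x)$, hence $u':=-\log(1-r')=u-\log(1-x)$. The elementary inequalities $x\le-\log(1-x)\le\tfrac{x}{1-x}\le\tfrac{e}{e-1}\,x$ on $(0,\tfrac1e]$ then give
\[
u+\frac{1}{e\,S(u)^{\sigma}}\ \le\ u'\ \le\ u+\frac{1}{(e-1)\,S(u)^{\sigma}}\ <\ u+\frac{1}{S(u)^{\sigma}}.
\]
Next, the proof of Borel's lemma \cite[Lemma~2.4]{H} applies with $S(u)^{\sigma}$ in place of $S(u)$ throughout: running the usual chain $a_{1}=\min F$, $b_{n}=a_{n}+S(a_{n})^{-\sigma}$, $a_{n+1}=\min(F\cap[b_{n},\infty))$ for $F:=\{u\in[u_{0},\infty):S(u+S(u)^{-\sigma})\ge2S(u)\}$, monotonicity forces $S(a_{n})\ge2^{n-1}S(a_{1})$, so $F\subset\bigcup_{n}[a_{n},b_{n}]$ with lengths bounded by the terms of the convergent series $S(a_{1})^{-\sigma}\sum_{n\ge1}2^{-(n-1)\sigma}$; thus $|F|<\infty$ and $S\bigl(u+S(u)^{-\sigma}\bigr)<2S(u)$ for all $u\notin F$. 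For such $u$, the monotonicity of $S$ together with the display above gives $S(u')\le S\bigl(u+S(u)^{-\sigma}\bigr)<2S(u)$, and since $T(r')=S(u')$ and $T(r)=S(u)$ this is precisely $T(r')<2T(r)$. Hence $E(\sigma):=\{\,r\in[r_{0},1):T(r')\ge2T(r)\,\}\subset u^{-1}(F)$, and therefore $\int_{E(\sigma)}\tfrac{dr}{1-r}\le|F|<\infty$, which is the assertion.

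The only subtlety I anticipate is the exponent $\sigma$: for $\sigma\ge1$ one could simply invoke \cite[Lemma~2.4]{H} for $S$ itself, because then $S(u)^{-\sigma}\le S(u)^{-1}$ and monotonicity transfers the conclusion; but for $\sigma\in(0,1)$ this comparison fails and one genuinely has to re-run the interval-chaining argument with the power, as above. Nothing else should cause difficulty; in particular the case in which $T$ is bounded is not exceptional, since the chain of intervals then terminates after finitely many steps (as $S(a_{n})\ge2^{n-1}\to\infty$) and $F$ is a finite union of intervals of total length at most a fixed constant.
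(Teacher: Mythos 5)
Your proof is correct; the paper states this lemma without proof, merely calling it a ``minor modification'' of Hayman's Borel lemma, and your argument --- the logarithmic change of variable $u=-\log(1-r)$, which turns $\frac{dr}{1-r}$ into Lebesgue measure, followed by re-running the interval-chaining argument with $S(u)^{-\sigma}$ in place of $S(u)^{-1}$ --- is exactly the modification intended. Your attention to the case $\sigma\in(0,1)$ (where one cannot simply quote the classical statement) and to bounded $T$ is appropriate.
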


\begin{remark}
Taking $R=r+(1-r) / (e T(r,f))$ in Lemma~\ref{G-estimate-disc}, and using Lemma~\ref{Borel}, we get
	$$
	\log \left|\frac{f^{(k)}(z)}{f^{(j)}(z)}\right| \lesssim \log T(r,f) + \log \frac{1}{1-r}, \quad r\notin E,
	$$
where $\int_E \frac{dt}{1-t} < \infty$.
\end{remark}

To prove Theorem~\ref{sdsd2-disc}, we will use an estimation for the logarithmic derivatives from \cite{CGHR}.

\begin{lemma}[\cite{CGHR}]\label{CGHR-estimate}
Let $0<R<\infty$ and let $f$ be meromorphic in a domain containing $\overline{D(0,R)}$. Suppose that $j,k$ are integers with $k>j\ge 0$, and $f^{(j)}\not\equiv 0$. Then
	\begin{align*}
	\int_{r^{\prime}<|z|<r} & \left|\frac{f^{(k)}(z)}{f^{(j)}(z)}\right|^{\frac{1}{k-j}} d m(z)\\
	&\lesssim R \log \frac{ e\left(R-r^{\prime}\right)}{R-r}\left(1+\log ^{+} \frac{1}{R-r}+T(R, f)\right), \quad 0 \leq r^{\prime}<r<R.
	\end{align*}
\end{lemma}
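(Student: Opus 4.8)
The plan is to reduce the estimate to the case of two consecutive derivatives and then to insert the Poisson--Jensen representation of a logarithmic derivative and integrate it, term by term, over the annulus $A:=\{\,z:r'<|z|<r\,\}$. For the reduction I would write $f^{(k)}/f^{(j)}=\prod_{i=j}^{k-1}f^{(i+1)}/f^{(i)}$; since there are exactly $k-j$ factors, the arithmetic--geometric mean inequality gives
$$
\left|\frac{f^{(k)}(z)}{f^{(j)}(z)}\right|^{\frac{1}{k-j}}
=\prod_{i=j}^{k-1}\left|\frac{f^{(i+1)}(z)}{f^{(i)}(z)}\right|^{\frac{1}{k-j}}
\le\frac{1}{k-j}\sum_{i=j}^{k-1}\left|\frac{f^{(i+1)}(z)}{f^{(i)}(z)}\right| ,
$$
so that it is enough to bound $\int_A|g'(z)/g(z)|\,dm(z)$ for $g=f^{(i)}$, $j\le i\le k-1$, and at the very end to absorb the quantities $T(R,f^{(i)})$ and the number of zeros and poles of $f^{(i)}$ into $T(R,f)$ via the comparison $N(R,f^{(i)})\lesssim N(R,f)+\log R$ of pole-counting functions together with the Gol'dberg--Grinshtein form of the lemma on the logarithmic derivative; these produce the stated error factor $1+\log^{+}\frac{1}{R-r}+T(R,f)$.

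Fix such a $g$. Differentiating the Poisson--Jensen formula for $g$ on $D(0,R)$ yields, for $|z|<R$,
$$
\left|\frac{g'(z)}{g(z)}\right|
\le\frac{1}{\pi}\int_{0}^{2\pi}\bigl|\log|g(Re^{i\theta})|\bigr|\,\frac{R}{|Re^{i\theta}-z|^{2}}\,d\theta
+\sum_{|c|<R}\left(\frac{1}{|z-c|}+\frac{1}{R-|z|}\right),
$$
where $c$ runs through the zeros and poles of $g$ in $D(0,R)$. Integrating over $A$ and applying Tonelli's theorem to the first term, the whole matter comes down to two planar estimates: the kernel estimate
$$
\int_{A}\frac{dm(z)}{|w-z|^{2}}=\pi\log\frac{R^{2}-r'^{2}}{R^{2}-r^{2}}\le\pi\log\frac{e(R-r')}{R-r},\qquad |w|=R,
$$
which follows from the identity $\int_{0}^{2\pi}\frac{d\theta}{R^{2}+\rho^{2}-2R\rho\cos\theta}=\frac{2\pi}{R^{2}-\rho^{2}}$ integrated against $\rho\,d\rho$; and the rearrangement bounds $\int_{A}dm(z)/|z-c|\lesssim\bigl(\operatorname{area}(A)\bigr)^{1/2}\lesssim R$ and $\int_{A}dm(z)/(R-|z|)\lesssim R\log\frac{e(R-r')}{R-r}$. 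Since $\frac{1}{2\pi}\int_{0}^{2\pi}|\log|g(Re^{i\theta})||\,d\theta\lesssim T(R,g)+1$ and the number of zeros and poles of $g$ in $D(0,R)$ is $\lesssim T(R,g)+1$, putting these together gives
$$
\int_{A}\left|\frac{g'(z)}{g(z)}\right|dm(z)\lesssim R\log\frac{e(R-r')}{R-r}\bigl(T(R,g)+1\bigr),
$$
and feeding back the reduction of $T(R,g)$ to $T(R,f)$ finishes the proof.

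The step I expect to be the main obstacle is the kernel estimate, and more precisely the decision to integrate the Poisson kernel in $z$ \emph{before} estimating it: the crude pointwise bound $|g'/g|\lesssim R(R-|z|)^{-2}(T(R,g)+1)+\cdots$ integrates over $A$ only to $R^{2}(R-r)^{-1}(T(R,g)+1)$, which is far weaker than the asserted $R\log\frac{e(R-r')}{R-r}(1+\cdots)$. The improvement rests on the fact that $|w-z|^{-2}$ is borderline integrable over a planar annulus kept a distance $R-r$ away from the circle $|w|=R$, which is exactly what produces the logarithmic factor $\log\frac{e(R-r')}{R-r}$; a secondary care point is checking that the zero/pole contributions and the auxiliary quantities $T(R,f^{(i)})$ are genuinely controlled at the single radius $R$, with no exceptional set.
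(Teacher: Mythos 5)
The paper does not actually prove this lemma: it is imported verbatim from \cite{CGHR}, so your argument can only be judged on its own terms. Its architecture is the natural one and several pieces are right: the reduction $|f^{(k)}/f^{(j)}|^{1/(k-j)}\le\frac{1}{k-j}\sum_{i=j}^{k-1}|f^{(i+1)}/f^{(i)}|$ is valid, the differentiated Poisson--Jensen formula is the correct starting point for a single quotient, and your kernel computation $\int_A|w-z|^{-2}\,dm(z)=\pi\log\frac{R^2-r'^2}{R^2-r^2}\le\pi\log\frac{e(R-r')}{R-r}$ for $|w|=R$ is exactly the source of the logarithmic factor in the statement.

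The genuine gap is in the zero/pole sum. Your estimate rests on the claim that the number $n$ of zeros and poles of $g=f^{(i)}$ in $D(0,R)$ satisfies $n\lesssim T(R,g)+1$. At the \emph{same} radius $R$ this is false with any constant uniform in $R$ (take a Blaschke product in $\D$ and let $R\to1^-$: $T(R,g)=O(1)$ while $n(R,1/g)\to\infty$), and uniformity in $R$ is indispensable here, since the Remark following the lemma applies it with $R=r+(1-r)/(eT(r,f))\to 1^-$. What is available is $n(\rho)\le\frac{R}{R-\rho}N(R)\lesssim\frac{R}{R-\rho}\,(T(R,g)+1)$ for $\rho<R$; feeding this (say with $\rho=(r+R)/2$) into your bound $\sum_c\int_A|z-c|^{-1}dm(z)\lesssim nR$ produces $\frac{R^2}{R-r}(T+1)$, which exceeds the asserted right-hand side by an unbounded factor as $r\to R$ (already for $r'=0$). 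The same defect affects your treatment of the reflected terms. The repair is not cosmetic: one must keep the zero/pole contributions in their exact Blaschke form $\frac{1}{z-c}+\frac{\bar c}{R^{2}-\bar c z}=\frac{R^{2}-|c|^{2}}{(z-c)(R^{2}-\bar c z)}$ --- a cancellation you discard when you replace the reflected term by $1/(R-|z|)$ --- because the numerator $R^{2}-|c|^{2}$ is what lets the sum over zeros and poles be controlled by the \emph{integrated} counting function, via $\sum_c(R-|c|)=\int_0^R n(t)\,dt\le R\,N(R)\lesssim R\,(T(R,f)+1)$, instead of by $R\,n(\rho)$. A secondary instance of the same error is the step $T(R,f^{(i)})\lesssim T(R,f)$ at the single radius $R$: this too requires intermediate radii between $r$ and $R$, and it is precisely where the term $\log^{+}\frac{1}{R-r}$ in the statement must come from.
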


\begin{remark}
In the case that $f$ is meromorphic in $\D$, we take $R=r+(1-r) / (e T(r,f))$ in Lemma~\ref{CGHR-estimate} and use Lemma~\ref{Borel} to obtain
	\begin{equation}\label{remark2}
	\log^+\int_{D(0,r)}  \left|\frac{f^{(k)}(z)}{f^{(j)}(z)}\right|^{\frac{1}{k-j}} d m(z) \lesssim \log T(r,f) + \log \frac{1}{1-r}, \quad r\notin E_3,
	\end{equation}
where $E_5$ is a set with $\int_{E_5} \frac{dt}{1-t}<\infty$.
\end{remark}


\begin{proof}[Proof of Theorem~\ref{sdsd2-disc}]

(i) Let $\{f_{0,1},\ldots,f_{0,n} \}$ be a given solution base of \eqref{lden}. We prove that there exist at least $n-p$ solutions $f$ in $\{f_{0,1},\ldots,f_{0,n}\}$ and a set $E\subset [0,1)$, with $\int_E \frac{dt}{1-t} < \infty$, such that
	\begin{equation}\label{pr1}
	\log^+\int_{D(0,r)} \left| A_p(z) \right|^{\frac{1}{n-p}} dm(z)  \lesssim \log T(r,f), \quad r\notin E.
	\end{equation} 
It suffices to prove that there are at most $p$ solutions $f$ 
in $\{f_{0,1},\ldots,f_{0,n}\}$ and a set $F\subset [0,1)$, with$\int_F \frac{dt}{1-t} = \infty$, such that 
	\begin{equation}\label{p11}
	\log T(r,f) = o\left( \log^+\int_{D(0,r)} \left| A_p(z) \right|^{\frac{1}{n-p}} dm(z) \right),\quad r\to1^-, \  r\in F,
	\end{equation}
 We assume on the contrary to this claim that there are $p+1$ solutions $f$ in $\{f_{0,1},\ldots,f_{0,n}\}$, say $f_{0,1},\ldots,f_{0,{p+1}}$, each satisfying \eqref{p11}, and aim for a contradiction.

Notice that $A_p \notin \A^{-\infty}$. In fact, if $A_p \in \A^{-\infty}$, then clearly
	$$
	\log\int_{D(0,r)} |A_p(z)|^{\frac{1}{n-p}} dm(z) \lesssim \log \frac{1}{1-r}.
	$$
Hence, from \eqref{con1}, the discussion following Theorem~\ref{sdsd2-disc} and the definition of the index $p$, we deduce that all coefficients $A_0, \ldots,A_{n-1}$ belong to $\A^{-\infty}$ and this contradicts the assumption that at least one  coefficient is not in $\A^{-\infty}$.

It follows from Lemma~\ref{coeff_Ap}, that
	$$
	|A_p(z)|^{\frac{1}{n-p}} \le \sum_{j=p+1}^{n-1} |A_j(z)|^{\frac{1}{n-p}} |C_j(z)|^{\frac{1}{n-p}} + |C_n(z)|^{\frac{1}{n-p}}.
	$$
Using Young's inequality for the product \cite[p.~49]{M} with conjugate indices $\frac{n-p}{n-j}$ and $\frac{n-p}{j-p}$, we get
	\begin{equation}\label{f1}
	|A_p(z)|^{\frac{1}{n-p}} \le  \sum_{j=p+1}^{n-1} \frac{n-j}{n-p} |A_j(z)|^{\frac{1}{n-j}} + \sum_{k=p+1}^n |C_k(z)|^{\frac{1}{k-p}}.
	\end{equation}
From \eqref{con1}, we deduce that there is a $\delta >0$ sufficiently small and an $r_0>0$ such that 
	\begin{equation}\label{f11}
	\sum_{j=p+1}^{n-1} \frac{n-j}{n-p}\int_{D(0,r)} |A_j(z)|^{\frac{1}{n-j}} dm(z) < (1-\delta) \int_{D(0,r)} |A_p(z)|^{\frac{1}{n-p}} dm(z), \quad r>r_0.
	\end{equation}
By combining \eqref{f1} and \eqref{f11}, we obtain
	\begin{equation}\label{f12}
	\log^+ \int_{D(0,r)} |A_p(z)|^{\frac{1}{n-p}} dm(z) \lesssim \sum_{k=p+1}^n  \log^+\int_{D(0,r)} |C_k(z)|^{\frac{1}{k-p}} dm(z) + 1, \quad r>r_0.
	\end{equation}
For $k=p+1, \dots, n$, we have by using the weighted AM--GM inequality \cite[p.~22]{P}, \eqref{remark2} and Lemma~\ref{5.3-unit},
	\begin{align*}
	\log^+\int_{D(0,r)}|C_k (z)|^{\frac{1}{k-p}} dm(z) &\lesssim  \sum_{l_{0}+l_{1}+\cdots+l_{p}=k-p} \log^+\int_{D(0,r)}\left(\left|\frac{f_{0,1}^{\left(l_{0}\right)}}{f_{0,1}}\right|\left|\frac{f_{1,1}^{\left(l_{1}\right)}}{f_{1,1}}\right| \cdots\left|\frac{f_{p, 1}^{\left(l_{p}\right)}}{f_{p, 1}}\right| \right)^{\frac{1}{k-p}} dm(z) + 1  \nonumber\\
	& \lesssim \sum_{l_{0}+l_{1}+\cdots+l_{p}=k-p}\; \sum_{\nu=0}^p   \log^+\int_{D(0,r)} \left|\frac{f_{\nu,1}^{\left(l_\nu\right)}}{f_{\nu,1}}\right|^{\frac{1}{l_\nu}} dm(z) +1 \nonumber \\
	& \lesssim \sum_{\nu=0}^p \log T(r,f_{\nu,1})  + \log \frac{1}{1-r} \nonumber \\
	& \lesssim   \sum_{l=1}^{p+1} T(r,f_{0,l}) + \log \frac{1}{1-r} , \quad r=|z|\in (r_0, 1) \setminus (E_2\cup E_5). \label{f2}
	\end{align*}
Hence, from \eqref{f12}, we get
	$$
	\log^+ \int_{D(0,r)} |A_p(z)|^{\frac{1}{n-p}} dm(z) \lesssim  \sum_{l=1}^{p+1} T(r,f_{0,l}) + \log \frac{1}{1-r}, \quad r\in (r_0, 1) \setminus (E_2\cup E_5).
	$$
By letting $r\to 1^-$ in $F\setminus(E_2\cup E_5)$, we get a contradiction by \eqref{p11} and \eqref{int-tran}. Thus, \eqref{pr1} is proved.

(ii) We prove that any non-trivial solution $f$ of \eqref{lden} satisfies
	\begin{equation}\label{pr2}
	\log T(r,f) \lesssim \log \int_{D(0,r)} |A_p(z)|^{\frac{1}{n-p}} dm(z).
	\end{equation}
From \cite[Corollary~5.3]{HKR}, see also \cite[Lemma~F]{HKR3}, we obtain
	\begin{align*}
	T(r,f) \lesssim \sum_{j=0}^{n-1} \int_{D(0,r)} |A_j(z)|^{\frac{1}{n-j}} dm(z)+1, 
	\end{align*}
and using \eqref{con1} yields
	\begin{equation}\label{prr1}
	T(r,f) \lesssim \sum_{j=0}^{p-1} \int_{D(0,r)} |A_j(z)|^{\frac{1}{n-j}} dm(z) + \int_{D(0,r)} |A_p(z)|^{\frac{1}{n-p}} dm(z).
	\end{equation}
With the same reasoning used to prove \eqref{DM}, we can prove 
	\begin{equation}\label{prr2}
	\int_{D(0,r)} |A_j(z)|^{\frac{1}{n-j}} dm(z) \lesssim \int_{D(0,r)} |A_p(z)|^{\frac{1}{n-p}} dm(z), \quad j=0,\ldots, p-1.
	\end{equation}
Thus, \eqref{pr2} follows from \eqref{prr1} and \eqref{prr2}.

(iii) The fact that the solutions satisfying \eqref{pr1} are rapid in the sense of (II) follows immediately from \eqref{int-tran} with $A_p$ in place of $g$. Thus, it remains to prove that $0$ is the only possible finite deficient value of the solutions $f$ satisfying \eqref{pr1}.
Let $f$ be a non-trivial solution of \eqref{lden} satisfying \eqref{pr1}. For any $j=0, \ldots, n-1$, $0<r<R<1$ and $R>1/\sqrt{\pi}$, we obtain by Jensen's inequality 
	\begin{equation}\label{transc}
	\begin{split}
	\log^+ \int_{D(0,R)} |A_j(z)|^{\frac{1}{n-j}} dm(z) &\ge \log^+ \int_{D(0,R)} |A_j(z)|^{\frac{1}{n-j}} \frac{dm(z)}{\pi 
R^2}  \\
	&\ge  \frac{1}{(n-j)\pi R^2}\int_{D(0,R)} \log^+|A_j(z)|dm(z) \\
	&\ge  \frac{1}{(n-j)\pi R^2} \int_{r}^R T(t,A_j)  t dt \\
	&\ge \frac{r}{(n-j)\pi R^2} \; (R-r)T(r,A_j). 
	\end{split}
	\end{equation}
From \eqref{con1} and \eqref{prr2}, we have 
	\begin{equation*}
	\log^+\int_{D(0,R)} |A_j(z)|^{\frac{1}{n-j}} dm(z) \lesssim  \log^+\int_{D(0,R)} |A_p(z)|^{\frac{1}{n-p}} dm(z)+1, \quad j=0,\ldots, n-1.
	\end{equation*}
Therefore, combining this with \eqref{pr1} and \eqref{transc}, it follows
	\begin{equation}\label{coefs}
	T(r,A_j) \lesssim \frac{\log T(R,f)+1}{R-r}  ,\quad  R\notin E, \ j=0, \ldots, n-1.
	\end{equation}
Let $R=r+ \frac{1-r}{e \sqrt{T(r,f)}}$, and let $\tilde{E}= \{r \in [0,1) : R \in E\}$. We will prove that $\int_{\tilde{E}} \frac{dr}{1-r} < \infty$. Suppose that $\int_{\tilde{E}} \frac{dr}{1-r} =\infty$, and aim for a contradiction. We have
	\begin{equation}\label{CV}
	dR = \varphi(r) dr,
	\end{equation}
where
	$$
	\varphi(r)= 1 - \frac{1}{e\sqrt{T(r,f)}} - \frac{(1-r) \frac{dT(r,f)}{dr}}{2eT(r,f)^{3/2}}.
	$$
Using the first main theorem \cite[Theorem~2.1.10]{Laine} in \eqref{n}, we obtain for any large enough $r<1$ and any $r< R^* <1$, that
	$$
	n(r,e^{i\theta}, f) \le 2 \frac{T(R^*,f)}{R^*-r}
	$$
uniformly for any $\theta\in[0,2\pi]$.
Therefore, by choosing $R^* = r + \frac{1-r}{e T(r,f)^{1/4}}$, it follows from Lemma~\ref{Borel} and from Cartan's identity \cite[p.~9]{H}, that
	$$
	r \frac{dT(r,f)}{dr} = \frac{1}{2\pi} \int_0^{2\pi} n(r,e^{i\theta}, f) d\theta \le 4e \frac{T(r,f)^{5/4}}{1-r}, \quad r\notin E(\tfrac{1}{4}),
	$$
where $\int_{E(\frac{1}{4})} \frac{dt}{1-t}< \infty$. Thus, we obtain
	$$
	1 > \varphi(r) \ge  1 - \frac{1}{e\sqrt{T(r,f)}} - \frac{2}{r\sqrt[4]{T(r,f)}}, \quad r\notin E(\tfrac14).
	$$
Hence, there exists an $r_0\in(0,1)$ such that for all $r\in (r_0,1) \setminus E(\tfrac14)$, we have $1/2 <\varphi(r)< 1$.
Thus,
	$$
	\infty > \int_E \frac{dR}{1-R} \ge  \int_{\tilde{E}} \frac{\varphi(r)dr}{1-r} \ge \frac12 \int_{\tilde{E} \setminus E(\frac14)} \frac{dr}{1-r} = \infty,
	$$
which is a contradiction. Hence, $\int_{\tilde{E}} \frac{dr}{1-r} < \infty$.

From \eqref{coefs} and from Lemma~\ref{Borel} by choosing $R=r+ \frac{1-r}{e \sqrt{T(r,f)}}$,  we obtain for $r\notin \big(\tilde{E} \cup E(\tfrac12) \big)$,
	$$
	\frac{T(r,A_j)}{T(r,f)} \lesssim \frac{\log T(R,f)+1}{(R-r)T(r,f)} \lesssim \frac{\log T(r,f)+1}{(1-r) \sqrt{T(r,f)}}  \lesssim \frac{1}{(1-r) \sqrt[4]{T(r,f)}}.
	$$
Since $f$ is of infinite order, it follows that there exists a set $F\subset[0,1)$ with $\int_F \frac{dt}{1-t}=\infty$ such that
	$$
	T(r,f)> \frac{1}{(1-r)^8}, \quad r\in F.
	$$
Therefore, 
	$$
	\frac{T(r,A_j)}{T(r,f)} \lesssim 1-r, \quad r\in F\setminus \big(\tilde{E} \cup E(\tfrac12)\big),
	$$
which means, $T(r,A_j) = o (T(r,f))$, $r\in F\setminus \big(\tilde{E} \cup E(\tfrac12)\big)$, for any $j=0, \ldots, n-1$. Clearly, the set $\tilde{F}= F\setminus \big(\tilde{E} \cup E(\tfrac12)\big)$ satisfies $\int_{\tilde{F}} \frac{dt}{1-t} = \infty$. Thus, following the proof of Wittich's theorem \cite[Theorem~4.3]{Laine}, we deduce that for any $a\in\C\setminus \{0\}$
	$$
	m\left(r,\frac{1}{f-a} \right) = o \left(T(r,f) \right),  \quad r\to 1^-,\  r \in \tilde{F}.
	$$
Therefore,
	\begin{equation*}
	\delta(a,f)=\liminf_{r\to1^-} \frac{m\left(r,\frac{1}{f-a} \right)}{T(r,f)} \le \liminf_{\substack{r\to1^- \\ r\in \tilde{F}}} \frac{m\left(r,\frac{1}{f-a} \right)}{T(r,f)}=0. 
	\end{equation*}
Hence, $0$ is the only possible finite deficient value for $f$.
\end{proof}


The following lemma is needed to prove Theorem~\ref{s-int}.

\begin{lemma}[{\cite{CGHR}}] \label{C-estimate}
Let $f$ be meromorphic in $\D$, and let $j,k$ be integers with $k>j\geq 0$ such that $f^{(j)}\not\equiv 0$. Let $s:[0,1) \to [0,1)$ be an~increasing continuous function such that $s(r) \in (r,1)$ and $s(r)-r$ is decreasing. If $\delta\in (0,1)$, then there exists a~measurable set $E\subset [0,1)$ with
		$
        \overline{d}(E)\leq \delta
        $
such that
	\begin{equation}\label{1}
  	\int_0^{2\pi} \bigg| \frac{f^{(k)}(re^{i\theta})}{f^{(j)}(re^{i\theta})} \bigg|^{\frac{1}{k-j}} \, d\theta
  \lesssim \frac{T(s(r),f) -\log(s(r)-r)}{s(r)-r}, \quad r\notin E.
	\end{equation}
Moreover, if $k=1$ and $j=0$, then the logarithmic term in \eqref{1} can be omitted.
\end{lemma}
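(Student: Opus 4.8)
The plan is to follow the route behind the area-integral estimates for logarithmic derivatives (as in \cite{CGHR} and Lemmas~\ref{CGHR-estimate} and \ref{G-estimate-disc}), but keeping circle integrals throughout. First I would reduce to the first-order case. Writing $\frac{f^{(k)}}{f^{(j)}}=\prod_{i=j}^{k-1}\frac{f^{(i+1)}}{f^{(i)}}$ and using the arithmetic--geometric mean inequality,
	\begin{equation*}
	\left|\frac{f^{(k)}(z)}{f^{(j)}(z)}\right|^{\frac{1}{k-j}}\le \frac{1}{k-j}\sum_{i=j}^{k-1}\left|\frac{f^{(i+1)}(z)}{f^{(i)}(z)}\right|,
	\end{equation*}
it suffices to estimate $\int_0^{2\pi}\bigl|g'(re^{i\theta})/g(re^{i\theta})\bigr|\,d\theta$ for each $g=f^{(i)}$, $j\le i\le k-1$, and to take the union of the $k-j$ resulting exceptional sets, each chosen of upper density at most $\delta/(k-j)$.

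For fixed $i$ put $g=f^{(i)}$ and $\rho=\rho(r):=\tfrac12(r+s(r))$, so that $r<\rho<s(r)<1$, $\rho-r=s(r)-\rho=\tfrac12(s(r)-r)$, and $\rho$ is increasing with $\rho-r$ decreasing. Differentiating the Poisson--Jensen formula for $\log|g|$ on $D(0,\rho)$ gives, for $|z|=r$,
	\begin{equation*}
	\frac{g'(z)}{g(z)}=\frac{1}{2\pi}\int_0^{2\pi}\frac{2\rho e^{i\varphi}}{(\rho e^{i\varphi}-z)^{2}}\log|g(\rho e^{i\varphi})|\,d\varphi+\sum_{a}\frac{\rho^{2}-|a|^{2}}{(z-a)(\rho^{2}-\overline a z)}-\sum_{b}\frac{\rho^{2}-|b|^{2}}{(z-b)(\rho^{2}-\overline b z)},
	\end{equation*}
where $a,b$ run through the zeros and poles of $g$ in $D(0,\rho)$ (after factoring out $z^{n(0,1/g)}$ if $g(0)=0$, which contributes an easily handled $n(0,1/g)/z$). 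I would bound the three pieces separately after integrating in $\theta$. For the Poisson integral one uses Fubini together with $\int_0^{2\pi}|\rho e^{i\varphi}-re^{i\theta}|^{-2}\,d\theta=\frac{2\pi}{\rho^{2}-r^{2}}$ and the first main theorem to get a bound $\lesssim\frac{T(\rho,g)+1}{\rho-r}$; since $T(\rho,f^{(i)})\le (i+1)T(\rho,f)+m(\rho,f^{(i)}/f)+O(1)$ and the disc analogue of the lemma on the logarithmic derivative (applied with the radii $\rho<s(r)$) gives $m(\rho,f^{(i)}/f)\lesssim \log^{+}T(s(r),f)+\log\frac{1}{s(r)-\rho}+1$, this is $\lesssim\frac{T(s(r),f)-\log(s(r)-r)}{s(r)-r}$. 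Note that for $i=0$ the term $m(\rho,f^{(i)}/f)$ disappears and $T(\rho,g)=T(\rho,f)\le T(s(r),f)$; this is the only place a logarithm is introduced, which is exactly why it may be dropped when $k=1$, $j=0$.

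For the zero and pole sums I would use $|\rho^{2}-\overline a z|\ge\rho(\rho-r)$ and $\rho^{2}-|a|^{2}\le2\rho(\rho-|a|)$ to get
	\begin{equation*}
	\left|\frac{\rho^{2}-|a|^{2}}{(z-a)(\rho^{2}-\overline a z)}\right|\le\frac{2(\rho-|a|)}{(\rho-r)\,|z-a|},
	\end{equation*}
together with the elementary inequality $\int_0^{2\pi}|re^{i\theta}-a|^{-1}\,d\theta\lesssim 1+\log^{+}\frac{1}{|\,r-|a|\,|}$. The ``bulk'' contribution $\frac{1}{\rho-r}\sum_a(\rho-|a|)$ is controlled because $\rho-|a|\le\rho\log\frac{\rho}{|a|}$, so $\sum_a(\rho-|a|)\le\rho\sum_a\log\frac{\rho}{|a|}=\rho N(\rho,1/g)\lesssim T(\rho,f^{(i)})+1$, and likewise $\sum_b(\rho-|b|)\le\rho N(\rho,g)\le\rho(i+1)N(\rho,f)+O(1)\lesssim T(s(r),f)+1$; hence this part has the required order. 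The essential difficulty is the remaining term
	\begin{equation*}
	\frac{1}{\rho-r}\sum_{a}(\rho-|a|)\log^{+}\frac{1}{|\,r-|a|\,|}
	\end{equation*}
and its analogue over the poles; this is where an exceptional set is forced and where the monotonicity hypotheses on $s$ are used. I would handle it by a Cartan-type covering argument: attach to each radius $|a|$ an interval of $[0,1)$ centred at $|a|$ of length a small multiple of $\rho(r)-|a|$, so that the union of all these intervals (over all zeros, and all relevant $r$) has upper density at most $\delta$, using that the total weight $\sum_a(\rho-|a|)$ is bounded by $T(s(r),f)-\log(s(r)-r)$; for $r$ outside the union, $|\,r-|a|\,|$ is bounded below by a fixed multiple of the attached interval length, making each $\log^{+}\frac{1}{|r-|a||}$ harmless and rendering the sum comparable to $\sum_a(\rho-|a|)$. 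Making this covering uniform in $r$ — so that one set $E$ with $\overline d(E)\le\delta$ serves all large $r$ simultaneously — is the main technical hurdle; the assumption that $s(r)$ is increasing and $s(r)-r$ decreasing is precisely what keeps the nested discs $D(0,\rho(r))$ compatible as $r$ varies. Adding the three pieces gives the first-order estimate, and the reduction of the first paragraph then completes the proof.
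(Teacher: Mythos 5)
The paper does not actually prove this lemma; it only cites \cite{CGHR}, so there is no internal proof to compare against. Your skeleton (AM--GM reduction to first-order quotients, the differentiated Poisson--Jensen formula, a Chebyshev/Cartan-type choice of the exceptional set) is the right one, but the step you yourself flag as ``the main technical hurdle'' is a genuine gap, and moreover the way you set it up cannot be completed. The bound $\bigl|\tfrac{\rho^2-|a|^2}{(z-a)(\rho^2-\overline a z)}\bigr|\le\tfrac{2(\rho-|a|)}{(\rho-r)\,|z-a|}$ followed by termwise integration in $\theta$ leads you to control $\tfrac{1}{\rho-r}\sum_a(\rho-|a|)\log^+\tfrac{1}{|r-|a||}$, and this quantity can exceed the right-hand side of \eqref{1} on a set of \emph{full} density: take $f$ with $2^k$ zeros on each of $2^k$ radii spaced $2^{-2k}$ apart in $[1-2^{-k},1-2^{-k-1}]$, so that $T(r,f)\asymp N(r,1/f)\asymp(1-r)^{-1}$; then for essentially every $r$ one computes $\sum_a(\rho-|a|)\log^+\tfrac{1}{|r-|a||}\asymp(1-r)^{-1}\log\tfrac{1}{1-r}$, a full logarithm above $T(s(r),f)-\log(s(r)-r)\asymp(1-r)^{-1}$. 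Consequently no exceptional set of upper density $\le\delta$ can rescue the argument, and your assertion that off the excluded intervals the sum becomes ``comparable to $\sum_a(\rho-|a|)$'' is false: deleting intervals of length $\varepsilon(\rho-|a|)$ only gives $\log^+\tfrac{1}{|r-|a||}\le\log\tfrac{2}{\varepsilon}+\log\tfrac{1}{\rho-|a|}$, the density constraint forces $\varepsilon\lesssim\delta(1-r)/T(s(r),f)$, and one ends with the multiplicative bound $(T(s(r),f)+1)\bigl(\log^+T(s(r),f)-\log(s(r)-r)\bigr)/(s(r)-r)$ rather than the additive one in \eqref{1}; in particular the case $k=1$, $j=0$, where the logarithm must disappear entirely, is out of reach.

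The repair is to \emph{not} split the Blaschke-type kernel. Since $|\rho^2-\overline a z|=|a|\,\bigl|\rho^2/\overline a-z\bigr|$ and the reflected point $\rho^2/\overline a$ satisfies $\bigl|\rho^2/\overline a-z\bigr|\gtrsim|z-a|+(\rho-r)+(\rho-|a|)$, one gets the sharper per-zero estimate
\begin{equation*}
\int_0^{2\pi}\frac{(\rho^2-|a|^2)\,d\theta}{|re^{i\theta}-a|\,|\rho^2-\overline a\,re^{i\theta}|}\lesssim 1+\log^+\frac{(\rho-r)+(\rho-|a|)}{|\,r-|a|\,|}.
\end{equation*}
Summing the constants gives $n(\rho)\lesssim(T(s(r),f)+1)/(s(r)-r)$, and the logarithms now integrate in $r$ over a block $I_n=[r_n,r_{n+1}]$ with $|I_n|\asymp s(r_n)-r_n$ to at most $2\sum_a\bigl((\rho-r_n)+(\rho-|a|)\bigr)\lesssim T(s(r_n),f)+1$, so a blockwise Chebyshev argument yields $\sum_a\log^+(\cdots)\lesssim\delta^{-1}\bigl(T(s(r),f)+1\bigr)/(s(r)-r)$ off a set of relative measure $\le\delta$ in each block, hence of upper density $\le 2\delta$; the monotonicity of $s$ and of $s(r)-r$ is used exactly here to keep $\rho(r_{n+1})\le s(r_n)$ and to pass from $T(s(r_n),f)$ and $s(r_n)-r_n$ to $T(s(r),f)$ and $s(r)-r$. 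With this replacement the remaining parts of your outline (the Poisson piece, the passage from $T(\rho,f^{(i)})$ to $T(s(r),f)$ via the two-radius logarithmic derivative lemma, and the AM--GM reduction) are correct and do go through.
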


\begin{remark}
Since $s(r)$ in Lemma~\ref{C-estimate} is arbitrary, we can choose it as $s(r)=r+ (1-r)/(eT(r,f))$. Then, using Lemma~\ref{Borel}, we obtain
	\begin{equation*}
	\log \int_0^{2\pi} \bigg| \frac{f^{(k)}(re^{i\theta})}{f^{(j)}(re^{i\theta})} \bigg|^{\frac{1}{k-j}} \, d\theta \lesssim \log T(r,f) + \log \frac{1}{1-r}, \quad r\notin E,
	\end{equation*}
where $\overline{d}(E)<1$.
\end{remark}

\begin{proof}[Proof of Theorem~\ref{s-int}]
 Let $\{f_{0,1},\ldots,f_{0,n} \}$ be a given solution base of \eqref{lden}. We prove that there exist at least $n-p$ solutions $f$ in $\{f_{0,1},\ldots,f_{0,n}\}$ and a set $E\subset [0,1)$, with $\overline{d}(E)<1$, such that
	\begin{equation}\label{pr1-1}
	\log^+\int_{0}^{2\pi} \left| A_p(re^{i\theta}) \right|^{\frac{1}{n-p}} d\theta  \lesssim \log T(r,f), \quad r\notin E.
	\end{equation} 
It suffices to prove that there are at most $p$ solutions $f$ 
in $\{f_{0,1},\ldots,f_{0,n}\}$ and a set $F\subset [0,1)$, with $\overline{d}(F) = 1$, such that 
	\begin{equation*}
	\log T(r,f) = o\left( \log^+\int_0^{2\pi} \left| A_p(re^{i\theta}) \right|^{\frac{1}{n-p}} d\theta \right),\quad r\to1^-, \ r\in F.
	\end{equation*}
 We assume on the contrary to this claim that there are $p+1$ solutions $f$ in $\{f_{0,1},\ldots,f_{0,n}\}$, say $f_{0,1},\ldots,f_{0,{p+1}}$, each satisfying \eqref{p11}, and aim for a contradiction. The rest of the proof is similar to the proof of Theorem~\ref{sdsd2-disc}, and hence we omit the details here.

The fact that the solutions satisfying \eqref{pr1-1} are rapid in the sense of (I) follows from \eqref{r.solI}. Thus, it remains to prove that $0$ is the only finite deficient value for the solutions satisfying \eqref{pr1-1}. From \eqref{limsup-int} and the definition of the index $p$, we get
	$$
	\log^+\int_{0}^{2\pi} \left| A_j(re^{i\theta}) \right|^{\frac{1}{n-j}} d\theta \lesssim \log^+\int_{0}^{2\pi} \left| A_p(re^{i\theta}) \right|^{\frac{1}{n-p}} d\theta,\quad j=0, \ldots, n-1.
	$$
Therefore, for any solution $f$ satisfying \eqref{pr1-1} and for every $j=0, \ldots, n-1$, we have by Jensen's inequality,
	\begin{align*}
	\frac{T(r,A_j)}{T(r,f)} &\lesssim \frac{\log^+ \displaystyle\int_{0}^{2\pi} \left| A_j(re^{i\theta}) \right|^{\frac{1}{n-j}} d\theta}{T(r,f)}\\
	&\lesssim 	\frac{\log^+ \displaystyle\int_{0}^{2\pi} \left| A_p(re^{i\theta}) \right|^{\frac{1}{n-p}} d\theta}{T(r,f)} \lesssim \frac{\log T(r,f)}{T(r,f)} \to 0, \quad r\to 1^-,\  r\notin E.
	\end{align*}
Following the proof of Wittich's theorem \cite[Theorem~4.3]{Laine}, we deduce that for any $a\in\C\setminus \{0\}$
	$$
	m\left(r,\frac{1}{f-a} \right) = o \left(T(r,f) \right),  \quad r\to 1^-,\  r\notin E,
	$$
where $\overline{d}(E)<1$. Thus,
	\begin{equation*}
	\delta(a,f)=\liminf_{r\to1^-} \frac{m\left(r,\frac{1}{f-a} \right)}{T(r,f)} \le \liminf_{\substack{r\to1^- \\ r\notin E}} \frac{m\left(r,\frac{1}{f-a} \right)}{T(r,f)}=0. 
	\end{equation*}
Hence, $0$ is the only possible finite deficient value for $f$.
\end{proof}


\section{Results on linear difference equations}\label{difference-sec}

Consider the difference equation
	\begin{equation}\label{d1}
	\Delta^n f(z)+A_{n-1}(z)\Delta^{n-1} f(z)+\cdots+A_{1}(z)\Delta f(z)+A_{0}(z)f(z)=0,
	\end{equation}
where $A_0(\not\equiv 0),\ldots,A_{n-1}$ are entire functions, and $\Delta $ is a difference operator defined by $\Delta f(z)=f(z+1)-f(z)$ and $\Delta^n f(z) = \Delta (\Delta^{n-1} f(z))$. Equation \eqref{d1} can be written in the form
	\begin{equation}\label{CF-diff}
	f(z+n)+B_{n-1}(z)f(z+n-1)+\cdots+B_0(z)f(z)=0
	\end{equation}	
and vice versa, see \cite[Section~3.2]{RO}.  Concerning the growth of meromorphic solutions of \eqref{CF-diff}, where the coefficients $B_{0},\ldots,B_{n-1}$ are entire, Korhonen and Ronkainen proved \cite[Theorem~4]{RO}, which reads as follows: \emph{Suppose that there exists an integer $p$ such that 
	\begin{equation}\label{assumption}
	T(r,B_j) = o (T(r,B_p))
	\end{equation}
for all $j\neq p$, where $r\to\infty$ outside an exceptional set of finite logarithmic measure. If $f$ is a meromorphic solution of \eqref{CF-diff} with hyper-order $\rho_2(f)=\rho_2<1$, then for any $\veps>0$,
	   \begin{equation}\label{conclusion-di}
	   T(r,f) \ge  r^{1-\rho_2-\veps} T(r,B_p)
	   \end{equation}
outside a set of finite logarithmic measure}. This result is a generalization of \cite[Theorem~9.2]{CF}. From the proof of \cite[Theorem~4]{RO}, if  we replace the assumption  \eqref{assumption} by
	 \begin{equation*}\label{condition-dom}
	 \limsup_{r\to \infty} \sum_{j\neq p}\frac{T(r,B_j)}{T(r,B_p)} <1,
	 \end{equation*}
then the same conclusion \eqref{conclusion-di} still holds. 
However, the reasoning in the proof of \cite[Theorem~4]{RO} does not seem to apply to \eqref{d1}. The reason for this is that the estimate for $m(r,\Delta^j f / \Delta^k f)$, $j < k$, is different from the corresponding estimate for $m(r,\Delta^j f / \Delta^k f)$, $j> k$. Meanwhile, the estimate for $m\left(r,f(z+j) /f(z+k) \right)$ is essentially the same for any $j\not= k$.

The discussion above gives raise to the following difference analogue of Theorem~\ref{thhh1}.
\begin{theorem}\label{thhh2}
Let $\{f_1, \ldots, f_n\}$ be a meromorphic solution base of \eqref{d1} with entire coefficients $A_{0},\ldots,A_{n-1}$ such that at least one of them  is non-constant. Suppose that $p\in\{0,\ldots,n-1\}$ is the smallest index such that \eqref{2} holds. Suppose further that each solution has hyper-order $<1$, and let $\rho_2$ be the maximum of hyper-orders of all the solutions. Then $A_p$ is non-constant, and there are at least $n-p$ solutions $f$ in $\{f_1,\ldots,f_n \}$ with the following property: For any $\veps>0$, there exists a set $I\subset [1,\infty)$ of finite logarithmic measure, such that
   \begin{equation}\label{d3-2}
	T(r,f) \ge  r^{1-\rho_2-\veps} T(r,A_p),\quad r\not\in \big(I\cup[0,1]\big).
	\end{equation}
For these solutions, the value $0$ is the only possible finite deficient value.
\end{theorem}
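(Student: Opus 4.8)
The plan is to run the same three-step scheme as in the proofs of Theorems~\ref{thhh1} and \ref{sdsd2-disc}, now for the operator $\Delta$: the lemma on the logarithmic derivative is replaced by the lemma on the logarithmic difference for meromorphic functions of hyper-order less than $1$, and Lemmas~\ref{5.3} and \ref{coeff_Ap} are replaced by their difference counterparts, namely the forthcoming Lemma~\ref{d5.3} together with the difference analogue of Lemma~\ref{coeff_Ap}, obtained by the same order-reduction computation with $\Delta$ in place of $\,'\,$ and with the reduction step $f_{q,s}=\Delta(f_{q-1,s+1}/f_{q-1,1})$, so that the $C_k$ in \eqref{CC} become finite sums of products of logarithmic differences $\Delta^{l_0}f_{0,1}/f_{0,1}\cdots\Delta^{l_p}f_{p,1}/f_{p,1}$ with $l_0+\cdots+l_p=k-p$. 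Routing the argument through this order reduction is precisely what removes the difficulty flagged before the statement: dividing \eqref{d1} directly by $\Delta^p f$ would force control of $m(r,\Delta^j f/\Delta^p f)$ for $j<p$, which is unavailable, whereas after $p$ reductions $f_{p,1}$ satisfies an equation one only has to divide by $f_{p,1}$ itself, so that only genuine logarithmic differences appear. That $A_p$ is non-constant is immediate from the minimality of $p$: if $A_p$ were constant then $T(r,A_p)=O(1)$, \eqref{2} would force $A_{p+1},\dots,A_{n-1}$ to be constant, and taking $p'<p$ to be the largest index with $A_{p'}$ non-constant (which exists) the $\limsup$ in \eqref{2} at $p'$ would be $0$, a contradiction.

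For the lower bound \eqref{d3-2} I would argue by contradiction: it suffices to rule out the existence of a set $F\subset[1,\infty)$ of infinite logarithmic measure carrying $p+1$ members $f_{0,1},\dots,f_{0,p+1}$ of the base and a constant $\veps_0>0$ with $T(r,f_{0,l})\le r^{1-\rho_2-\veps_0}T(r,A_p)$ for $r\in F$ and all $l$, so I assume such $F$, $\veps_0$ and solutions exist. Applying the difference order reduction with these solutions as the first $p+1$ members of the base and taking proximity functions in $-A_p=C_n+A_{n-1}C_{n-1}+\cdots+A_{p+1}C_{p+1}$, each $m(r,C_k)$ is controlled, up to $O(1)$, by a finite sum of terms $m(r,\Delta^{l_\nu}f_{\nu,1}/f_{\nu,1})=o\bigl(T(r,f_{\nu,1})/r^{1-\rho_2-\veps}\bigr)$ (lemma on the logarithmic difference, $r$ outside a set of finite logarithmic measure, $\veps>0$ arbitrary), while Lemma~\ref{d5.3} gives $T(r,f_{\nu,1})\lesssim\sum_{l=1}^{p+1}T(r,f_{0,l})+\log r$ for $\nu\le p$; together with $m(r,A_j)\le T(r,A_j)$ and $\sum_{j=p+1}^{n-1}T(r,A_j)\le(1-\delta)T(r,A_p)$ from \eqref{2}, this yields, for $r$ off a set $I$ of finite logarithmic measure,
$$\delta\,T(r,A_p)\ \lesssim\ \frac{\sum_{l=1}^{p+1}T(r,f_{0,l})+\log r}{r^{1-\rho_2-\veps}}.$$
Restricting to $r\in F\setminus I$ (still of infinite logarithmic measure), inserting $T(r,f_{0,l})\le r^{1-\rho_2-\veps_0}T(r,A_p)$, choosing $\veps<\veps_0$, dividing by $T(r,A_p)$ and letting $r\to\infty$ along $F\setminus I$ forces $\delta\le 0$; here $A_p$ non-constant (so $T(r,A_p)\to\infty$) is what makes the $\log r$-term disappear in the limit. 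This contradiction gives at least $n-p$ base solutions satisfying \eqref{d3-2}.

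For the deficiency statement, arguing as in the proofs of \eqref{DM} and \eqref{prr2} one first gets $T(r,A_j)\lesssim T(r,A_p)$ for $0\le j\le p-1$, hence for every $j$, so \eqref{d3-2} gives $T(r,A_j)/T(r,f)\lesssim r^{-(1-\rho_2-\veps)}\to 0$ for $r\notin I$; thus every solution obeying \eqref{d3-2} is admissible along $[1,\infty)\setminus I$. Then, for $a\ne 0$, substituting $g=f-a$ into \eqref{d1} (so $g$ solves the same equation with right-hand side $-aA_0$), dividing by $g$ and taking proximity functions gives $m(r,1/(f-a))\le\sum_{j=1}^n m(r,\Delta^j g/g)+\sum_{j=0}^{n-1}m(r,A_j)+m(r,1/A_0)+O(1)$; the lemma on the logarithmic difference and the admissibility just obtained bound the right-hand side by $o(T(r,f))$ for $r$ off a set of finite logarithmic measure, and since $\delta(a,f)$ is the $\liminf$ over all $r$ of $m(r,1/(f-a))/T(r,f)$ we conclude $\delta(a,f)=0$. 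This is the difference-equation analogue of Wittich's theorem.

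The step I expect to be the main obstacle is the bookkeeping that keeps the gain $r^{1-\rho_2-\veps}$ intact through the $p$ successive reductions — each $\Delta$ costs a unit shift, absorbed by the near shift-invariance of $T(r,\cdot)$ for hyper-order $<1$ — and the way the several exceptional sets of finite logarithmic measure interact with the ``slow'' set $F$; the analytic inputs themselves (the lemma on the logarithmic difference for hyper-order $<1$, Lemma~\ref{d5.3}, and the difference analogue of Lemma~\ref{coeff_Ap} giving \eqref{4}) are standard or already provided.
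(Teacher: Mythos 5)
Your proposal is correct and follows essentially the same route as the paper's proof: the order reduction via $f_{q,s}=\Delta(f_{q-1,s+1}/f_{q-1,1})$ (Lemmas~\ref{d5.3} and \ref{d-lemma-or}), proximity functions applied to the representation $-A_p=C_n+A_{n-1}C_{n-1}+\cdots+A_{p+1}C_{p+1}$ with the Halburd--Korhonen--Tohge logarithmic difference lemma supplying the factor $r^{-(1-\rho_2-\veps)}$, the contradiction argument against $p+1$ slow solutions, and the difference analogue of Wittich's theorem for the deficiency claim. The only cosmetic discrepancies are that the paper's Lemma~\ref{d5.3} carries an additive constant rather than $\log r$, and that the factors in $C_k$ involve shifted logarithmic differences $\Delta^{l_\nu}f_{\nu,1}(z+\cdot)/f_{\nu,1}(z+\cdot)$ rather than unshifted ones; neither affects the argument.
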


\begin{remark}
Suppose that $f$ is a meromorphic solution of \eqref{d1} with constant coefficients. 
We make a simple modification of the reasoning in \cite[Section~1]{HKLRT} as follows.
Define $w(z)=f(e^{2\pi iz}+z)$, and denote $\zeta=e^{2\pi iz}+z$. Then 
	$$
	\Delta w(z)=w(z+1)-w(z)=f(\zeta+1)-f(\zeta)=\Delta f(\zeta).
	$$
It follows that $w(z)$ solves \eqref{d1}, and grows much faster than $f(z)$. Further changes
of variable produces a sequence of functions $\{w_n\}$ each solving \eqref{d1} such that
$w_{n+1}$ grows faster than $w_n$ for every $n$. Thus no upper bound for the growth of
solutions of \eqref{d1} can be given in the case of constant coefficients.
\end{remark}




In order to prove Theorem \ref{thhh2}, we need the following difference analogue of Lemma~\ref{5.3}.
\begin{lemma}\label{d5.3}
Suppose that $f_{0,1},\ldots,f_{0,n}$ are linearly independent meromorphic functions of hyper-order $<1$. Define inductively
	\begin{equation}\label{lin-indep-sols2}
	f_{q,s}=\Delta\left(\frac{f_{q-1,s+1}}{f_{q-1,1}}\right),\quad 1\leq q\leq n-1,\ 1\leq s\leq n-q.
	\end{equation}
Then there exists a set $I_1\subset[1,\infty)$ of finite logarithmic measure, such that 
	\begin{equation}\label{d01}
	T(r,f_{q,s})\lesssim \sum_{l=1}^{q+s} T(r,f_{0,l})+1,\quad r\not \in \big(I_1\cup [0,1]\big).
	\end{equation}
\end{lemma}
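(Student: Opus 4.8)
The plan is to argue by induction on $q$, with the lemma on the logarithmic difference playing the role that the lemma on the logarithmic derivative plays in the differential setting of Lemma~\ref{5.3}. The case $q=0$ is trivial, since $T(r,f_{0,s})\le\sum_{l=1}^{s}T(r,f_{0,l})$. For the inductive step fix $q$ with $1\le q\le n-1$ and assume that \eqref{d01} holds for the index $q-1$ outside a set of finite logarithmic measure. Put $h=f_{q-1,s+1}/f_{q-1,1}$, so that $f_{q,s}(z)=h(z+1)-h(z)$. By the subadditivity of the Nevanlinna characteristic under differences and by the first main theorem applied to the quotient $h$,
\[
T(r,f_{q,s})\le T(r,h(z+1))+T(r,h)+\log 2\le T(r,h(z+1))+T(r,f_{q-1,s+1})+T(r,f_{q-1,1})+O(1).
\]

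First I would record that every function occurring in the iteration has hyper-order $<1$: the hyper-order does not increase under shifts, products, quotients and sums of meromorphic functions, and $f_{q-1,1}$, $f_{q-1,s+1}$ and $h$ are built from $f_{0,1},\dots,f_{0,n}$ by finitely many such operations. Hence the difference analogue of the lemma on the logarithmic derivative (already used in Section~\ref{difference-sec}; see \cite{CF} and \cite{HKLRT}) applies: combining the bound $m(r,h(z+1)/h(z))=o(T(r,h))$ with the elementary control of $N(r,h(z+1))$ by $N(r+1,h)$ and the estimate $T(r+1,h)=(1+o(1))T(r,h)$ valid for functions of hyper-order $<1$, one obtains $T(r,h(z+1))=T(r,h)+o(T(r,h))$ as $r\to\infty$ outside a set of finite logarithmic measure. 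Since $T(r,h)\le T(r,f_{q-1,s+1})+T(r,f_{q-1,1})+O(1)$, the error term is absorbed and
\[
T(r,f_{q,s})\lesssim T(r,f_{q-1,s+1})+T(r,f_{q-1,1})+1
\]
outside a set of finite logarithmic measure. Applying the induction hypothesis to $f_{q-1,s+1}$, which contributes $\sum_{l=1}^{q+s}T(r,f_{0,l})$, and to $f_{q-1,1}$, which contributes $\sum_{l=1}^{q}T(r,f_{0,l})\le\sum_{l=1}^{q+s}T(r,f_{0,l})$, yields \eqref{d01} at level $q$. Taking the union of the finitely many exceptional sets arising for the pairs $(q,s)$ with $1\le q\le n-1$ and $1\le s\le n-q$, and restricting it to $[1,\infty)$, produces the set $I_1\subset[1,\infty)$ of finite logarithmic measure; the remaining interval $[0,1]$ is discarded to absorb the harmless small-$r$ behaviour.

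Two points need care. First, exactly as in Lemma~\ref{5.3}, the functions $f_{q,s}$ are tacitly assumed well defined, i.e.\ $f_{q,1}\not\equiv 0$ along the chain; in the setting of Theorem~\ref{thhh2} this is automatic because the difference order reduction carries a solution base of \eqref{d1} to a solution base of an equation of order $n-q$, and in any event the asserted inequality is vacuous at any step where a function in sight vanishes identically. Second, the genuine obstacle --- and the only essential point of departure from the differential Lemma~\ref{5.3} --- is that the shift estimate $T(r,h(z+1))=T(r,h)+o(T(r,h))$ must hold outside a set of \emph{finite logarithmic} measure rather than finite linear measure, which is precisely the regime in which difference Nevanlinna theory operates; this is exactly where the standing hypothesis that each $f_{0,l}$ has hyper-order $<1$ is used, and it is why the exceptional set $I_1$ in \eqref{d01} carries only finite logarithmic measure.
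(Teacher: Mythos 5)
Your proposal is correct and follows essentially the same route as the paper: induction on $q$, the bound $T(r,f_{q,s})\le T(r,h(z+1))+T(r,h)+O(1)$ for $h=f_{q-1,s+1}/f_{q-1,1}$, and the shift estimate $T(r,h(z+1))\lesssim T(r,h)+1$ outside a set of finite logarithmic measure obtained from the lemma on logarithmic differences together with the counting-function shift estimate for hyper-order $<1$ (the paper cites \cite[Theorem~5.1]{HKK} and \cite[Lemma~8.3]{HKK} for exactly these two ingredients). Your explicit remark that the hyper-order condition propagates to the quotients $h$ is a point the paper leaves implicit, but otherwise the arguments coincide.
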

\begin{proof}
First, suppose that $q=1$. If $1\leq s\leq n-1$, it follows from \eqref{lin-indep-sols2}, \cite[Theorem 5.1]{HKK} and \cite[Lemma~8.3]{HKK} that there exists a set $I_0 \subset[1, \infty)$ of finite logarithmic measure, such that
	\begin{eqnarray*}
	T(r,f_{1,s}(z))&\leq& T\left(r,\frac{f_{0,s+1}}{f_{0,1}}(z+1)\right)+T\left(r,\frac{f_{0,s+1}}{f_{0,1}}(z)\right)+O(1)\\ 
	&\lesssim& T\left(r,\frac{f_{0,s+1}}{f_{0,1}}(z)\right)+N\left(r+1,\frac{f_{0,s+1}}{f_{0,1}}(z)\right) +1 \\
		&\lesssim& T\left(r,\frac{f_{0,s+1}}{f_{0,1}}(z)\right)+1 \\
	&\lesssim& \sum_{l=1}^{1+s} T(r,f_{0,l})+1,\quad r\to\infty, \  r\not \in \big(I_0\cup [0,1]\big).
\end{eqnarray*}
Thus, \eqref{d01} holds for $q=1$.

Second, we assume that \eqref{d01} is true for $q=m-1$, that is, there exists a set $I_0^*\subset[1,\infty)$ of finite logarithmic measure, such that
\begin{equation*}\label{d-induction}
	T(r,f_{m-1,s})\lesssim \sum_{l=1}^{m+s-1} T(r,f_{0,l})+1,\quad r\not \in \big(I_0^*\cup [0,1]\big),
	\end{equation*}
where $1\leq s\leq n-m+1$. Therefore, applying the reasoning from the
case $q=1$ to the functions 
	$$
	f_{m,s}=\Delta\left(\frac{f_{m-1,s+1}}{f_{m-1,1}}\right),\quad 1\leq s\leq n-m,
	$$
the assertion \eqref{d01} follows for $q=m$. This proves \eqref{d01} for $1\leq q\leq n-1$.
\end{proof}

Using the order reduction method for linear difference equations introduced in \cite{RO}, we easily obtain the following difference analogue of Lemma~\ref{coeff_Ap}.

\begin{lemma}\label{d-lemma-or}
Let the coefficients $A_{0},\ldots,A_{n-1}$ in \eqref{d1} be meromorphic functions in $\Bbb C$, and let $f_{0,1},\ldots,f_{0,n}$ be linearly independent 
solutions of the equation \eqref{d1}. Define the functions $f_{q,s}$ as in \eqref{lin-indep-sols2}. Then, for $p \in \{0, 1, \ldots, n-1\}$, we have
	\begin{equation*}\label{d-4}
	-A_{p}= C_{n}+A_{n-1}C_{n-1}+\cdots+A_{p+1}C_{p+1},
	\end{equation*}
where $C_{p+1},\ldots, C_{n}$ have the following form
	\begin{align*}
 \label{d-CC}
	C_j = \sum_{{ l_0 + l_1 + \cdots + l_p = j-p}} &K_{l_0, l_1, \ldots, l_p}
	\,\frac{\Delta^{l_0}f_{0,1}(z+j-l_0)}{f_{0,1}(z+n)}\nonumber\\
	&\,\,\cdots\,\frac{\Delta^{l_{p-1}}f_{p-1,1}(z+j-p+1-l_{p-1})}{f_{p-1,1}(z+n-p+1)}\,\frac{\Delta^{l_p}f_{p,1}(z)}{f_{p,1}(z)}.	
\end{align*}	
Here $0\leq l_0,l_1,\ldots,l_p\leq j-p$ and $K_{l_0, l_1, \ldots, l_p}$ are absolute positive constants.
\end{lemma}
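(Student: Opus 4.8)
The plan is to transcribe, nearly line by line, the proof of Lemma~\ref{coeff_Ap}, with $d/dz$ replaced by $\Delta$, the differential order reduction replaced by the difference order reduction of \cite{RO}, and the ordinary Leibniz rule replaced by its difference counterpart
\[
\Delta^{k}(uv)(z)=\sum_{i=0}^{k}\binom{k}{i}(\Delta^{i}u)(z+k-i)\,(\Delta^{k-i}v)(z),
\]
which is where every argument shift in the statement originates. Renaming the coefficients in \eqref{d1} as $A_{0,0},\ldots,A_{0,n-1}$ and setting $A_{0,n}\equiv1$, the substitution $f=f_{0,1}\,g$ into \eqref{d1}, followed by collecting the identity according to the powers of $\Delta$ applied to $g$, makes the coefficient of $g(z)$ vanish because $f_{0,1}$ solves \eqref{d1}; hence $f_{1,s}=\Delta(f_{0,s+1}/f_{0,1})$ solves a difference equation of order $n-1$ whose coefficients are explicit combinations, with binomial-coefficient weights, of shifts of $f_{0,1}$, of shifts of $1/f_{0,1}$, and of the $A_{0,k}$. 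Iterating this construction $q$ times, exactly as in \eqref{qlde}--\eqref{coeff}, produces the equation of order $n-q$ solved by the functions $f_{q,s}$ of \eqref{lin-indep-sols2}, with reduced coefficients $A_{q,j}$ obeying a recursion of the same shape.

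Taking $q=p$, the solution $f_{p,1}$ of the reduced equation gives, in direct analogy with \eqref{2.10}, the identity
\[
-A_{p,0}(z)=\sum_{i=1}^{n-p}A_{p,i}(z)\,\frac{\Delta^{i}f_{p,1}(z)}{f_{p,1}(z)},\qquad A_{p,\,n-p}\equiv1.
\]
As in the proof of Lemma~\ref{coeff_Ap}, I would then prove by induction on $m=1,\ldots,p$ the difference analogues of \eqref{claim-A} and \eqref{claim-C}: each $A_{p,i}$ unfolds into a linear combination of the original coefficients $A_{0,p+i},\ldots,A_{0,n}$ in which the coefficient of $A_{0,s}$ is a finite sum, with absolute positive constants assembled from binomial coefficients, of products of shifted difference quotients of $f_{p-m,1},\ldots,f_{p-1,1}$; the inductive step is the rearrangement \eqref{eqeq}, now generated by the difference recursion. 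Setting $m=p$ and substituting into the displayed identity for $-A_{p,0}$, then collecting by the powers of $A_{0,k}$ for $k=p+1,\ldots,n$ (with $A_{0,n}\equiv1$ yielding the leading term $C_n$), produces $-A_{p}=C_{n}+A_{n-1}C_{n-1}+\cdots+A_{p+1}C_{p+1}$ with $C_k$ of the stated shape: the unshifted factor $\Delta^{l_p}f_{p,1}(z)/f_{p,1}(z)$ comes from the displayed identity for $-A_{p,0}$, while the shifted factors $\Delta^{l_\nu}f_{\nu,1}(z+j-\nu-l_\nu)/f_{\nu,1}(z+n-\nu)$ for $\nu=0,\ldots,p-1$ come from the coefficients produced in the induction.

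The step I expect to be the main obstacle is the bookkeeping of the argument shifts, which has no counterpart in Lemma~\ref{coeff_Ap} because differentiation does not translate the variable. Every application of the difference Leibniz rule displaces the evaluation point of a surviving numerator by the order of the difference operator that lands on it, and one must verify that after the $p$-fold iteration and the final rearrangement these displacements telescope to precisely $z+j-\nu-l_\nu$ in the $\nu$-th numerator and $z+n-\nu$ in the $\nu$-th denominator, and that the multi-index exponents satisfy $l_0+l_1+\cdots+l_p=j-p$ with $0\le l_\nu\le j-p$. This is a transparent but notation-heavy translation of the shift pattern already present in the difference recursion for the reduced coefficients; no genuinely new difficulty arises, and in particular the hyper-order hypothesis used in Lemma~\ref{d5.3} and Theorem~\ref{thhh2} plays no role in this purely algebraic identity.
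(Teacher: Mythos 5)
Your proposal is correct and follows exactly the route the paper intends: it states only that the lemma is obtained ``using the order reduction method for linear difference equations introduced in \cite{RO}'' as a difference analogue of Lemma~\ref{coeff_Ap}, which is precisely your plan of mirroring that proof with the difference Leibniz rule in place of the ordinary one and tracking the resulting argument shifts. Your shift bookkeeping $\Delta^{l_\nu}f_{\nu,1}(z+j-\nu-l_\nu)/f_{\nu,1}(z+n-\nu)$ reproduces the stated form of $C_j$, and your observation that the identity is purely algebraic (no growth hypotheses needed) is consistent with the paper.
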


Finally, we need a difference analogue of Wittich's theorem. 

\begin{lemma} \label{W-dif}
Suppose that a meromorphic solution $f$ of \eqref{d1} satisfies \eqref{admissibl}, where $E\subset[1,\infty)$ is a set of finite logarithmic measure. Then $0$ is the only possible finite  Nevanlinna deficient value for $f$.
\end{lemma}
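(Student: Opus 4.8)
\medskip
The plan is to imitate the classical proof of Wittich's theorem (see \cite[Theorem~4.3]{Laine}) with the logarithmic derivative replaced throughout by the logarithmic difference. We may assume that $f$ is transcendental, since otherwise \eqref{admissibl} forces every $A_j$ to be constant and the assertion is vacuous. Fix $a\in\C\setminus\{0\}$ and put $g=f-a$. Then $T(r,g)=T(r,f)+O(1)$, $g\not\equiv 0$, and, crucially, $\Delta^k g=\Delta^k f$ for every $k\geq 1$ because $\Delta$ annihilates constants. Substituting $f=g+a$ into \eqref{d1} therefore yields the identity
$$
-aA_0=\Delta^n g+A_{n-1}\Delta^{n-1}g+\cdots+A_1\Delta g+A_0 g,
$$
which is the difference counterpart of the identity on which the classical proof rests.

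Next I would divide this identity by $g$, apply the proximity function, and use $T(r,1/(aA_0))=T(r,A_0)+O(1)$ together with $m(r,A_j)\le T(r,A_j)$ to obtain
$$
m\!\left(r,\frac{1}{f-a}\right)\lesssim \sum_{k=1}^{n}m\!\left(r,\frac{\Delta^{k} g}{g}\right)+\sum_{j=0}^{n-1}T(r,A_j)+O(1).
$$
To control the difference quotients, observe that $\Delta^{k}g$ is an integer linear combination of the shifts $g(z),g(z+1),\ldots,g(z+k)$, so that $m(r,\Delta^{k}g/g)\lesssim \max_{1\le i\le n}m\bigl(r,g(z+i)/g(z)\bigr)+O(1)$; the estimate for the logarithmic difference, in the form of \cite[Theorem~5.1]{HKK} and \cite[Lemma~8.3]{HKK} as already used in the proof of Lemma~\ref{d5.3} and available because the solution has hyper-order $<1$, then gives
$$
m\!\left(r,\frac{\Delta^{k}g}{g}\right)=o\bigl(T(r,g)\bigr)=o\bigl(T(r,f)\bigr),\quad r\to\infty,\ r\notin I,
$$
for some set $I\subset[1,\infty)$ of finite logarithmic measure.

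Combining the two preceding displays with the admissibility hypothesis \eqref{admissibl}, we get $m(r,1/(f-a))=o(T(r,f))$ as $r\to\infty$ outside $E\cup I$, which is a set of finite logarithmic measure; hence its complement in $[1,\infty)$ has infinite logarithmic measure and
$$
\delta(a,f)=\liminf_{r\to\infty}\frac{m(r,1/(f-a))}{T(r,f)}\le \liminf_{\substack{r\to\infty\\ r\notin E\cup I}}\frac{m(r,1/(f-a))}{T(r,f)}=0.
$$
Since $a\in\C\setminus\{0\}$ was arbitrary, no nonzero finite value is a Nevanlinna deficient value of $f$. The value $a=0$ escapes the argument because then the identity above (with $g=f$) reads $-A_0=\Delta^{n}f/f+A_{n-1}\Delta^{n-1}f/f+\cdots+A_1\Delta f/f$, which only bounds $m(r,A_0)$ from above and gives no information about $m(r,1/f)$.

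The substitution $f=g+a$ and the proximity-function bookkeeping are routine and parallel \cite[Theorem~4.3]{Laine} verbatim. The step I expect to be the main obstacle is the control of $m(r,\Delta^{k}g/g)$: one must be sure that the exceptional set produced by the logarithmic difference lemma really is of finite logarithmic measure, that uniting it with the set $E$ from \eqref{admissibl} still leaves a complement of infinite logarithmic measure (which it does), and that the hyper-order restriction required to invoke that lemma is in force — which it is in every situation where Lemma~\ref{W-dif} is applied, since there the solutions under consideration have hyper-order $<1$.
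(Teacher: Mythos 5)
Your proof is correct and follows essentially the same route as the paper's, which simply invokes the classical Wittich argument of \cite[Theorem~4.3]{Laine} with the lemma on logarithmic differences \cite[Theorem~5.1]{HKK} in place of the lemma on the logarithmic derivative; you have merely written out the details (the substitution $g=f-a$, the identity $-aA_0=\Delta^ng+\cdots+A_0g$, and the proximity-function estimates). Your closing observation is apt: the hyper-order $<1$ hypothesis needed to apply \cite[Theorem~5.1]{HKK} is not stated in the lemma but is implicitly required by both your argument and the paper's, and it is in force in every application of the lemma.
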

\begin{proof}
Let $a\in\C\setminus \{0\}$. Using the same reasoning as in the proof of \cite[Theorem~4.3]{Laine}, and by using the lemma on the logarithmic differences \cite[Theorem~5.1]{HKK} instead of the lemma on the logarithmic derivatives, we easily obtain
	$$
	\liminf_{r\to\infty} \frac{m\left(r,\frac{1}{f-a} \right)}{T(r,f)}=0.
	$$
This completes the proof.
\end{proof}

\begin{proof}[Proof of Theorem \ref{thhh2}]
We rename the solutions $f_1, \ldots, f_n$ by $f_{0,1}, \ldots, f_{0,n}$. We prove that there are at most $p$ solutions $f$ in $\{f_{0,1}, \ldots, f_{0,n}\}$ satisfying, for some $\veps_0>0$,
	\begin{equation}\label{d3.1-ii}
	\frac{T(r,f)}{r^{1-\rho_2-\varepsilon_0} T(r,A_{p})}  < 1, \quad r\in F,
	\end{equation}
where $F\subset [1,\infty)$ has infinite logarithmic measure. We assume on the contrary to this claim that there are $p+1$ 
solutions $f$ in $\{f_{0,1}, \ldots, f_{0,n}\}$, say $f_{0,1},\ldots,f_{0,{p+1}}$, satisfying \eqref{d3.1-ii}, and aim for a contradiction. 
	
Similarly as in the proof of Theorem~\ref{asymptotic-cor}, we deduce that $A_p$ is non-constant, and therefore $T(r,A_p)$ is unbounded.

From Lemma \ref{d-lemma-or}, we have the immediate estimate
	\begin{equation}\label{d5}
	 m(r,A_{p})\leq \sum_{j=p+1}^n m(r,A_{j})+\sum_{j=p+1}^n m(r,C_j)+O(1).
	\end{equation}
From \cite[Theorem~5.1]{HKK}, for any $\veps \in (0,\veps_0) $, there exists a set $I_2\subset[1,\infty)$ of finite logarithmic measure, such that 
	$$
	\sum_{j=p+1}^n m(r,C_j)\lesssim \sum_{q=0}^{p}\frac{ T(r,f_{q,1})}{r^{1-\rho_2-\veps}}+1, \quad r\notin \big( I_2\cup[0,1]\big).
	$$
Therefore, from Lemma~\ref{d5.3}, it follows that
	\begin{equation}\label{d6}
	\begin{split}
	\sum_{j=p+1}^n m(r,C_j)\lesssim\sum_{l=1}^{p+1}\frac{T(r,f_{0,l})}{r^{1-\rho_2-\veps}} +1,\quad r\notin \big(I_1\cup I_2\cup[0,1]\big).
	\end{split}
	\end{equation}
Analogously as in the proof of Theorem~\ref{asymptotic-cor}, we use \eqref{2}, \eqref{d3.1-ii}, \eqref{d5}
and \eqref{d6} to obtain a contradiction.  Thus, we obtain the conclusion that at least $n-p$ solutions $f$ in $\{f_1, \ldots, f_n\}$ satisfy \eqref{d3-2}. 

From \eqref{2} and from the definition of the index $p$, we easily get
	$$
	T(r,A_j) = O(T(r,A_p)), \quad j\not=p.
	$$
Therefore, combining this with \eqref{d3-2}, we obtain $T(r,A_j) = o(T(r,f))$, $r\to\infty, \, r\notin \big(I\cup[0,1]\big)$.  Hence,  according to Lemma~\ref{W-dif}, the value $0$ is the only possible finite deficient value for the solutions satisfying \eqref{d3-2}. Thus, the theorem is proved.
\end{proof}

\section{Results on linear $q$-difference equations}\label{q-difference-sec}

Consider the $q$-difference equation
\begin{equation}\label{qd1}
	\Delta_q^n f+A_{n-1}\Delta_q^{n-1} f+\cdots+A_{1}\Delta_q f+A_{0}f=0,
	\end{equation}
where $A_0(\not\equiv 0),\ldots,A_{n-1}$ are entire functions, and $\Delta_q$, for $q\in\C\setminus\{0\}$, is a $q$-difference operator defined by $\Delta_q f(z)=f(qz)-f(z)$, $\Delta_q^{n}f(z)=\Delta_q(\Delta_q^{n-1}f(z))$.
Since the results for equations \eqref{qd1} are very similar to those for equations \eqref{d1}, we will state the results in this section without giving the proofs. The reader  will have no problem in verifying the results  by studying the proofs in Section~\ref{difference-sec}.  We begin with a $q$-difference analogue of Theorem~\ref{thhh1}.

\begin{theorem}\label{qthhh2}
Let $\{f_1, \ldots, f_n\}$ be a meromorphic solution base of \eqref{qd1} with entire coefficients $A_{0},\ldots,A_{n-1}$ such that at least one of them  is non-constant. Suppose that $p\in\{0,\ldots,n-1\}$ is the smallest index such that \eqref{2} holds. If each solution is of zero order, then $A_p$ is non-constant, and there are at least $n-p$ solutions $f$ in $\{f_1,\ldots,f_n \}$ satisfying
	\begin{equation}\label{qd3}
	T(r,A_{p})=o( T(r,f)),\quad r\in F,
	\end{equation}
where $F$ is a set of lower logarithmic density 1.
\end{theorem}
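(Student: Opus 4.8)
The plan is to follow the proof of Theorem~\ref{thhh2} almost verbatim, with the difference operator $\Delta$ replaced by the $q$-difference operator $\Delta_q$ and the lemma on the logarithmic difference replaced by its $q$-difference analogue for zero-order meromorphic functions; that $q$-difference lemma asserts that if $g$ is meromorphic of zero order and $q\in\C\setminus\{0\}$, then $m\bigl(r,\Delta_q^{\,j}g/g\bigr)=o\bigl(T(r,g)\bigr)$ and $T\bigl(r,g(qz)\bigr)=(1+o(1))T(r,g)$ as $r\to\infty$ outside a possible exceptional set of zero lower logarithmic density. Before the main argument I would record three auxiliary results, each proved exactly as its difference counterpart in Section~\ref{difference-sec}: a $q$-difference analogue of Lemma~\ref{d5.3} for the functions $f_{q,s}=\Delta_q\bigl(f_{q-1,s+1}/f_{q-1,1}\bigr)$, giving $T(r,f_{q,s})\lesssim\sum_{l=1}^{q+s}T(r,f_{0,l})+\log r$ off a set of zero lower logarithmic density (note that each $f_{q,s}$ is again of zero order, so the $q$-difference lemma applies to it); a $q$-difference analogue of Lemma~\ref{d-lemma-or}, i.e.\ the order-reduction identity $-A_p=C_n+A_{n-1}C_{n-1}+\cdots+A_{p+1}C_{p+1}$, where each $C_k$ is a finite sum of products of quotients of the shape $\Delta_q^{\,l}f_{\nu,1}/f_{\nu,1}$ evaluated at suitable $q$-dilated arguments; and a $q$-difference analogue of Wittich's theorem (Lemma~\ref{W-dif}), obtained by running the proof of \cite[Theorem~4.3]{Laine} with the $q$-logarithmic difference lemma in place of the logarithmic derivative lemma.

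Then I would carry out the contradiction argument as in Theorem~\ref{thhh2}. Renaming $f_1,\dots,f_n$ as $f_{0,1},\dots,f_{0,n}$, it suffices to show that at most $p$ of them satisfy $T(r,f)=o\bigl(T(r,A_p)\bigr)$ on a set $F\subset[1,\infty)$ of positive lower logarithmic density; assume to the contrary that $f_{0,1},\dots,f_{0,p+1}$ all do. As in the proof of Theorem~\ref{asymptotic-cor}, the minimality of $p$ in \eqref{2} together with the assumption that some coefficient is non-constant forces $A_p$ to be non-constant, so $T(r,A_p)$ is unbounded. Applying the proximity function to the order-reduction identity and using that the coefficients are entire, I would obtain
\[
T(r,A_p)\le\sum_{j=p+1}^{n-1}T(r,A_j)+\sum_{k=p+1}^{n}m(r,C_k)+O(1),
\]
and then bound $\sum_{k=p+1}^{n}m(r,C_k)\lesssim\sum_{l=1}^{p+1}o\bigl(T(r,f_{0,l})\bigr)+\log r$ off a set of zero lower logarithmic density by combining the $q$-logarithmic difference lemma with the $q$-analogues of Lemmas~\ref{d-lemma-or} and \ref{d5.3}. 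Dividing by $T(r,A_p)$, letting $r\to\infty$ along $F$ with the finitely many zero-lower-density exceptional sets deleted, and invoking the slow-growth hypothesis and \eqref{2}, this yields $1\le\limsup_{r\to\infty}\sum_{j=p+1}^{n-1}T(r,A_j)/T(r,A_p)<1$, which is absurd. Hence at least $n-p$ of the $f_{0,l}$ satisfy \eqref{qd3} on a set of lower logarithmic density~$1$. For the statement about deficient values, for each such solution $f$ the minimality of $p$ in \eqref{2} gives $T(r,A_j)=O\bigl(T(r,A_p)\bigr)$ for every $j$, so \eqref{qd3} yields $T(r,A_j)=o\bigl(T(r,f)\bigr)$ off a zero-lower-density set and hence $f$ is admissible in the sense of \eqref{admissibl}; the $q$-difference analogue of Wittich's theorem then shows that $0$ is the only possible finite deficient value of $f$.

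I expect the main obstacle to be the bookkeeping with exceptional sets: in the $q$-difference setting they are no longer of finite logarithmic measure but only of zero lower logarithmic density, so I must verify that a finite union of such sets again has zero upper logarithmic density, that deleting all of them from $F$ leaves a set of positive lower logarithmic density (so that the $\limsup$ contradiction is legitimate), and that the final exceptional set for \eqref{qd3} has lower logarithmic density~$1$. Once this density calculus is in place, every estimate transfers from Section~\ref{difference-sec} essentially word for word.
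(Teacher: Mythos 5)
Your proposal is correct and is essentially the proof the paper intends: the paper gives no explicit argument for Theorem~\ref{qthhh2}, instead supplying Lemmas~\ref{qd5.3} and \ref{qd-lemma-or} together with the logarithmic $q$-difference lemma of Barnett--Halburd--Korhonen--Morgan and asking the reader to transplant the proof of Theorem~\ref{thhh2}, which is exactly what you do (including the lower/upper logarithmic density bookkeeping for the exceptional sets). The only superfluous part is your final paragraph on deficient values: Theorem~\ref{qthhh2} makes no such claim, and the paper explicitly remarks that this question is uninteresting here because a meromorphic function of zero (lower) order has at most one deficient value.
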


Recall that the upper logarithmic density of a set $I\subset [1,\infty)$ is given by
	$$
	\uld(I) = \limsup_{r\to \infty} \frac{1}{\log r}\int_{I\cap [0,r]} \frac{dr}{r}.
	$$
Clearly, $0\le\uld(I)\le 1$. The key lemma on the logarithmic $q$-difference is \cite[Theorem~1.1]{BHMK} and it asserts: \emph{Let $f$ be a non-constant zero-order meromorphic function, and let $q\in\C\setminus  \{0\}$. Then
	$$
	m\left(r, \frac{f(q z)}{f(z)}\right)=o(T(r, f))
	$$
on a set of logarithmic density $1$.}

It is known that if a meromorphic function of zero lower order cannot have more than one deficient value. For that reason, it is not so interesting to study the deficient values of the solutions in Theorem~\ref{qthhh2}. However, by the  definition of the index $p$, the conclusion \eqref{qd3} holds for any $A_j$ in place of $A_p$. This corresponds to the definition of admissible solutions.

Using $T(r,f(qz)) = T(|q|r,f) + O(1)$ in \cite[p.~249]{BIY} and using \cite[Lemma~4]{H2}, we can easily obtain a $q$-difference analogue of Lemma~\ref{5.3}.

\begin{lemma}\label{qd5.3}
Suppose that $f_{0,1},\ldots,f_{0,n}$ are linearly independent meromorphic functions of zero order. Define inductively
	\begin{equation}\label{q-lin-indep-sols2}
	f_{q,s}=\Delta_q\left(\frac{f_{q-1,s+1}}{f_{q-1,1}}\right),\quad 1\leq q\leq n-1,\ 1\leq s\leq n-q.
	\end{equation}
Then
	\begin{equation*}
	T(r,f_{q,s})\lesssim \sum_{l=1}^{q+s} T(r,f_{0,l})+1,\quad r \in F,
	\end{equation*}
where $F\subset[1,\infty)$ is a set of lower logarithmic density 1.
\end{lemma}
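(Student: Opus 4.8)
The plan is to imitate the proof of Lemma~\ref{d5.3} almost verbatim, the only change being that the role played there by the lemma on the logarithmic difference is now taken over by the \emph{exact} $q$-shift identity $T(r,g(qz))=T(|q|r,g)+O(1)$ from \cite[p.~249]{BIY}, together with \cite[Lemma~4]{H2}, which for a zero-order meromorphic function $g$ bounds $T(|q|r,g)$ in terms of $T(r,g)$ outside a thin exceptional set; because the shift identity is exact, no analogue of the logarithmic difference lemma is actually required. I would first record that the class of zero-order meromorphic functions is closed under sums, products and quotients (immediate from the first main theorem) and under the dilation $z\mapsto qz$ (immediate from $T(r,g(qz))=T(|q|r,g)+O(1)$); hence, by induction on $q$, every $f_{q,s}$ defined in \eqref{q-lin-indep-sols2} is of order zero, which is exactly what makes \cite[Lemma~4]{H2} applicable at each stage.

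For the base case $q=1$, fix $1\le s\le n-1$ and set $g=f_{0,s+1}/f_{0,1}$, a zero-order function. Then $f_{1,s}=g(qz)-g(z)$, so
$$
T(r,f_{1,s})\le T(r,g(qz))+T(r,g)+\log 2=T(|q|r,g)+T(r,g)+O(1).
$$
By \cite[Lemma~4]{H2} there is a set $F_0\subset[1,\infty)$ of lower logarithmic density $1$ on which $T(|q|r,g)\lesssim T(r,g)+1$ (this is automatic when $|q|\le 1$, since $T(\cdot,g)$ is non-decreasing), and combining this with $T(r,g)\le T(r,f_{0,s+1})+T(r,f_{0,1})+O(1)$ gives $T(r,f_{1,s})\lesssim\sum_{l=1}^{1+s}T(r,f_{0,l})+1$ for $r\in F_0$. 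Intersecting over the finitely many values of $s$ settles the case $q=1$.

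For the induction step I would assume the asserted bound holds for $q=m-1$ on a set $F^{*}$ of lower logarithmic density $1$, and then apply the base-case reasoning to $f_{m,s}=\Delta_q\bigl(f_{m-1,s+1}/f_{m-1,1}\bigr)$, $1\le s\le n-m$: since $f_{m-1,s+1}/f_{m-1,1}$ is again of order zero, \cite[Lemma~4]{H2} yields a set $F_m$ of lower logarithmic density $1$ on which $T(r,f_{m,s})\lesssim T(r,f_{m-1,s+1})+T(r,f_{m-1,1})+1$, and invoking the induction hypothesis on $F^{*}$ turns this into $T(r,f_{m,s})\lesssim\sum_{l=1}^{m+s}T(r,f_{0,l})+1$ on $F^{*}\cap F_m$. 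Taking $F$ to be the intersection of the finitely many exceptional-set complements arising for all $1\le q\le n-1$ and all admissible $s$ completes the argument. There is no genuine obstacle here; the only two points that merit a line of justification are the propagation of the order-zero property through the iterates $f_{q,s}$ (needed so that \cite[Lemma~4]{H2} applies at every step) and the exceptional-set bookkeeping, namely that a finite union of sets of upper logarithmic density $0$ still has upper logarithmic density $0$, so that $F$ indeed has lower logarithmic density $1$.
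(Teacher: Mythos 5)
Your argument is correct and is exactly the route the paper intends: it mirrors the proof of the shift-difference analogue (Lemma~\ref{d5.3}), with the exact dilation identity $T(r,g(qz))=T(|q|r,g)+O(1)$ and Hayman's Lemma~4 for zero-order characteristics replacing the logarithmic-difference machinery, which is precisely the one-line recipe the paper gives in lieu of a written proof. The two points you flag as needing justification (propagation of order zero through the iterates, and closure of lower logarithmic density $1$ under finite intersections) are indeed the only details to check, and you handle both correctly.
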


By a simple modification of the reasoning in \cite{RO}, we obtain, for a fixed $q\in\{1,\ldots,n-1\}$, that the functions 
    \begin{equation*}
	f_{q,s}=\Delta_q\left(\frac{f_{q-1,s+1}}{f_{q-1,1}}\right),\quad 1\leq q\leq n-1,\ 1\leq s\leq n-q.
	\end{equation*}
are linearly independent solutions of the equation
	\begin{equation*}
	\Delta_q^{n-q} f+A_{q,n-q-1}\Delta_q^{n-q-1} f+\cdots+A_{q,1}\Delta_q f+A_{q,0}f=0,
	\end{equation*}
where
	\begin{equation*}
	A_{q,j}(z) =  \sum_{k=j+1}^{n-q+1} {k\choose j+1} A_{q-1,k}(z)  
	\frac{\Delta_q^{k-j-1}f_{q-1,1}(q^{j+1}z)}{f_{q-1,1}(q^{n-q+1}z)}
	\end{equation*}
holds for $q=1,\ldots,n-1$ and $j=0,\ldots,n-q-1.$ 
Then we follow the proof of Lemma~\ref{coeff_Ap} to obtain a representation for $A_p$ in terms of the coefficients $A_{p+1}, \ldots, A_{n-1}$ and the solution base of \eqref{qd1}.

\begin{lemma}\label{qd-lemma-or}
Let the coefficients $A_{0},\ldots,A_{n-1}$ in \eqref{qd1} be meromorphic functions in $\Bbb C$, and let $f_{0,1},\ldots,f_{0,n}$ be linearly independent 
solutions of the equation \eqref{qd1}. Define the functions $f_{q,s}$ as in \eqref{q-lin-indep-sols2}. Then, for $p \in \{0, 1, \ldots, n-1\}$, we have
	\begin{equation*}\label{qd-4}
	-A_{p}= C_{n}+A_{n-1}C_{n-1}+\cdots+A_{p+1}C_{p+1},
	\end{equation*}
where $C_{j}$, $j= p+1, \ldots, n$, have the following form
	\begin{align*}
 \label{qd-CC}
	C_j = \sum_{{ l_0 + l_1 + \cdots + l_p = j-p}} &K_{l_0, l_1, \ldots, l_p}
	\,\frac{\Delta_q^{l_0}f_{0,1}(q^{j-l_0}z)}{f_{0,1}(q^nz)}\nonumber\\
	&\,\,\cdots\,\frac{\Delta^{l_{p-1}}f_{p-1,1}(q^{j-p+1-l_{p-1}}z)}{f_{p-1,1}(q^{n-p+1}z)}\,\frac{\Delta^{l_p}f_{p,1}(z)}{f_{p,1}(z)}.	
\end{align*}	
Here $0\leq l_0,l_1,\ldots,l_p\leq j-p$ and $K_{l_0, l_1, \ldots, l_p}$ are absolute positive constants.
\end{lemma}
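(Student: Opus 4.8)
The plan is to reproduce the proof of Lemma~\ref{coeff_Ap} line by line, replacing the derivative by the $q$-difference operator $\Delta_q$ and inserting, at each step, the dilations $z\mapsto q^{c}z$ that accompany $\Delta_q$. First I would rename $A_{0},\ldots,A_{n-1}$ in \eqref{qd1} by $A_{0,0},\ldots,A_{0,n-1}$ and iterate the $q$-difference order reduction recorded just above the lemma $p$ times; this produces linearly independent solutions $f_{p,1},\ldots,f_{p,n-p}$ of a reduced equation
	$$
	\Delta_q^{n-p}f+A_{p,n-p-1}\Delta_q^{n-p-1}f+\cdots+A_{p,1}\Delta_q f+A_{p,0}f=0,
	$$
whose coefficients $A_{m,j}$ are obtained from the $A_{m-1,k}$ through the displayed recursion.

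Since $f_{p,1}$ solves this reduced equation, dividing by $f_{p,1}$ gives
	$$
	-A_{p,0}=\frac{\Delta_q^{n-p}f_{p,1}}{f_{p,1}}+A_{p,n-p-1}\frac{\Delta_q^{n-p-1}f_{p,1}}{f_{p,1}}+\cdots+A_{p,1}\frac{\Delta_q f_{p,1}}{f_{p,1}}.
	$$
The next task is to express each $A_{p,i}$, $0\le i\le n-p-1$, in terms of $A_{0,p+i},\ldots,A_{0,n}$. I would do this by induction on the number $m=1,\ldots,p$ of reduction steps already performed, mirroring the chain \eqref{claim-A}--\eqref{claim-C}: each application of the recursion rewrites a coefficient as a linear combination of later coefficients whose multipliers are sums of products of $q$-difference quotients $\Delta_q^{l}f_{\nu,1}(q^{c}z)/f_{\nu,1}(q^{c'}z)$ with $\nu<p$, the numerical coefficients occurring being positive integers (products of ordinary binomial coefficients) independent of $z$. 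Taking $m=p$, substituting the resulting expressions for the $A_{p,i}$ back into the identity for $-A_{p,0}$, and collecting terms according to the factors $A_{0,p+1},\ldots,A_{0,n-1}$, I arrive at
	$$
	-A_{0,p}=C_n+A_{0,n-1}C_{n-1}+\cdots+A_{0,p+1}C_{p+1},
	$$
with $C_{p+1},\ldots,C_n$ of the form stated in the lemma.

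The one place where work genuinely differs from the differential case is the tracking of the dilation exponents, which are absent there: one must verify that, after all the reduction steps have been unwound, the powers of $q$ attached to each $f_{\nu,1}$ in the numerators and denominators of $C_j$ come out exactly as displayed, namely $\Delta_q^{l_\nu}f_{\nu,1}(q^{\,j-\nu-l_\nu}z)/f_{\nu,1}(q^{\,n-\nu}z)$ for $0\le\nu\le p-1$ together with the undilated last factor $\Delta_q^{l_p}f_{p,1}(z)/f_{p,1}(z)$. This is a purely combinatorial check, identical in structure to the one behind Lemma~\ref{d-lemma-or} once every additive translation $z+c$ there is read as the dilation $q^{c}z$; accordingly, as already announced at the beginning of this section, I would omit the routine details.
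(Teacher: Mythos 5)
Your proposal is correct and follows essentially the same route as the paper: the paper likewise only records the $q$-difference order reduction recursion for the coefficients $A_{q,j}$ and then refers back to the proof of Lemma~\ref{coeff_Ap}, i.e.\ the identity $-A_{p,0}=\Delta_q^{n-p}f_{p,1}/f_{p,1}+\cdots$ for the reduced equation followed by the induction on the number of reduction steps and the bookkeeping of the dilation exponents, which you have verified correctly (for $0\le\nu\le p-1$ the factor is $\Delta_q^{l_\nu}f_{\nu,1}(q^{\,j-\nu-l_\nu}z)/f_{\nu,1}(q^{\,n-\nu}z)$, with the last factor undilated). Nothing essential is missing.
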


\footnotesize

\end{document}